\documentclass{article}

\usepackage{natbib}
\bibliographystyle{abbrvnat}
\setcitestyle{authoryear,open={(},close={)}} %Citation-related commands
\usepackage{float}
\usepackage{xcolor}
\usepackage{graphicx}
\usepackage{hyperref}
\usepackage{amsmath, amsthm, amssymb, amsfonts}
 \usepackage{amsfonts}
 \usepackage{amsmath}

 \usepackage{amsthm}
 \usepackage{graphicx}
 \usepackage{hyperref}
 \usepackage{bigints}
 \usepackage{wrapfig} 
 \usepackage{caption}
\usepackage{subcaption}
\usepackage{mathtools}

\usepackage{changes}

\definechangesauthor[name=Soe, color=blue]{so}
\definechangesauthor[name=Ern, color=violet]{em}

 %reubicar

%----- 
%\setcitestyle{square}
\usepackage[english]{babel}
%\usepackage[square,numbers]{natbib}
%----------------------------------------------------------
\newtheorem{thm}{Theorem}[section] % reset theorem numbering per chapter
\newtheorem{theorem}[thm]{Theorem}
\newtheorem{assumptions}[thm]{Assumption}

\newtheorem{corollary}[thm]{Corollary}
\newtheorem{lemma}[thm]{Lemma}
\newtheorem{proposition}[thm]{Proposition}
\newtheorem{remark}[thm]{Remark}
\newtheorem{definition}[thm]{Definition}
%----------------------------------------------------------

%----------------------------------------------------------

%{\mathbb{F}}

\newcommand{\R}{\mathbb{R}}%{\mathds{R}}

\renewcommand\P{\operatorname{\mathbf{P}}}
\newcommand\E{\operatorname{\mathbf{E}}}
\makeatletter
\newcommand*\bigcdot{\mathpalette\bigcdot@{.5}}
\newcommand*\bigcdot@[2]{\mathbin{\vcenter{\hbox{\scalebox{#2}{$\m@th#1\bullet$}}}}}
\makeatother

\title{Mean-Field Games with two-sided singular controls for L\'evy processes}
\author{Facundo Oli\'u
\thanks{Ingeniería Forestal, Centro Universitario de Tacuarembó, Universidad de la República,%Department and Organization
            Road 5, km. 386.500, 
            Tacuarembó city,
            Tacuarembó,
            Uruguay}}

\begin{document}
\maketitle

\begin{abstract}

In a probabilistic mean-field game driven by a L\'evy process an individual player aims to minimize a long-run discounted/ergodic cost by controlling the process through a pair of –increasing and decreasing– c\`adl\`ag processes, while he is interacting with an aggregate of players through the expectation of a controlled process by another pair of c\`adl\`ag processes.
With the Brouwer fixed point theorem, we provide easy to check conditions for the existence of mean-field game equilibrium controls for both the discounted and ergodic control problem, characterize them as the solution of an integro-differential equation and show with a counterexample that uniqueness does not always holds. Furthermore, we study the convergence of equilibrium controls in the abelian sense.
Finally, we treat the convergence of a finite-player game to this problem to justify our approach.
\end{abstract}
\textbf{Keywords:} \textit{L\'evy processes, \ Bounded-Variation Singular Control,  Mean-Field-Games, Dynkin games.}

\section{Introduction}\label{Ch4:Introduction}
Mean-field game theory has emerged for modeling the behavior of large populations of interacting players in a stochastic environment.
 In a \emph{strategic game} or $N$-player game,
in the search of \emph{Nash equilibrium}, when a large number of players try to minimize their respective cost functions, the natural simplification is to treat \emph{asymptotic problem}. This new problem where the players are replaced by a measure, or in some cases a function, is called \emph{mean-field game} or for short MFG.
 As a reference we cite \cite{CD(2018)} \& \cite{LL}. For related MFG problems see \cite{CFL}, \cite{Lacker}, \cite{FU},  \cite{LackerZ}, \cite{GR}, \cite{Tobi},  \cite{CG}, \cite{ABF}, \cite{CDF}, \cite{BDTY}, \cite{BCP}. We specifically want to mention the \emph{impulse MFG control problem}  \cite{Tobi} and the singular MFG control problem for diffusions \cite{CMO} as their MFG formulation are similar to ours.
 
As applications, for instance, in finance and energy systems, see \cite{Carmona}, in traffic management and social dynamics, see \cite{ADRIANOFIESTA!}, and in machine learning, see \cite{SUBRAMANIAN}. 

The problems in which we add an MFG component fall under the class of \emph{bounded variation, stochastic singular control optimization discounted and ergodic problems}. As we are working in the real line, we can make a distinction between the cost of the increasing part of the control and the cost of the decreasing one. In stochastic bounded variation control, the displacement of the state caused by the control is of bounded variation. Moreover, we say that it is singular when the optimal control is the reflection inside an interval. The reason for this name is that for the Brownian motion (not necessarily for every L\'evy processes) the trajectory of the reflected controls in intervals are singular with respect to the Lebesgue measure. 
As references we cite \cite{KARATZAS}, \cite{MenaldiRobin}, \cite{AW}, \cite{AJackZervos}, \cite{AW2},   \cite{MenaldiRobin2013}, \cite{wuchen}, \cite{ACPZ}, \cite{KXYZ}, \cite{Alvarez}. 
\\
With respect to applications of singular control results, we mention studies focusing on cash flow management that investigate optimal dividend distribution, recapitalization, or a combination of both, while considering risk neutrality. 
See, for example, \cite{AT}, \cite{HT}, \cite{JS}, \cite{P}, \cite{BK}, and \cite{SLG}. 
Moreover, the ergodic model comes pretty handy  in problems of finding sustainable harvesting policies, see \cite[Chapter I]{CLARK}.
\\
Our departure points are the singular control problems posed in \cite{MO}, where the underlying processes are L\'evy. In that framework, in the present paper, we incorporate a mean-field game
dependence into the two-sided discounted and ergodic singular control problem. As a consequence, we obtain sufficient conditions
for the existence of mean-field game equilibrium points and characterize them as the solution of an integro-differential equation. \\
Finally, we define an $N$-player problem and prove that a mean-field equilibrium
is an approximate Nash equilibrium for the $N$-player game.
 \\
The rest of the paper is organized as follows.
In Section \ref{C43:Setting} we define the framework and provide the main results of this article.
In Section \ref{C43:FixedpointTheorem} we use the adjoint Dynkin game to prove that there is a MFG equilibrium for the $\epsilon$-discounted control problem, for that endeavor we use Brouwer fixed point Theorem.
In Section \ref{C43:MFGequilibriumforergodic} we use regenerative theory to prove that the equilibrium points in the discounted case have a convergent subsequence to a MFG equilibrium for the ergodic problem.
In Section \ref{C43:Examples} we provide examples and in Remark \ref{Remark:Nonuniqueness} we show that uniqueness does not needs to hold. Finally in Section
\ref{C4:Nplayer} we study the convergence of the $N$-player game to the MFG for both problems.

%---------------------------------------------------------------------------------

%------------------------------------------------------------------------------------
\section{Framework and main results}\label{C43:Setting}

\subsection{L\'evy processes, controls and cost functions}\label{S:framework}
Let $X=\{X_t\}_{t\geq 0}$ be a L\'evy process with finite mean that is neither a subordinator, nor the opposite of a subordinator on a stochastic basis
${\cal B}=(\Omega, {\cal F}, {\bf F}=({\cal F}_t)_{t\geq 0}, \mathbf{P}_x)$ departing from $X_0=x$.
Assume that the filtration is right-continuous and complete (see \cite[Definition 1.3]{JJAS}).
Denote by $\E_x$ the expected value associated to the probability measure $\mathbf{P}_x$, let $\E= \E_0$ and $\mathbf{P}=\mathbf{P}_0$.   
The L\'evy-Khintchine formula characterizes the law of the process,
stating
$$
\phi (z)= \log \left( \E( e^{z X_1}) \right), \qquad z=i\theta \in i\mathbb{R},
$$
with
\begin{equation*}
\phi(z)={\sigma^2\over 2}z^2+z \mu+\int_{\R}\left(e^{z y}-1-z y\right)\Pi(dy),
\end{equation*}
where $\mu=\E(X_1)\in\R$, $\sigma\geq 0$ and $\Pi(dy)$ is a non-negative measure (the \emph{jump measure}) 
that satisfies in our case $\int_{\R}(y^2\wedge |y|)\Pi(dy)<\infty$. 
This L\'evy process, being a special semimartingale (see \cite[Chapter II, 2.29]{JJAS}), 
can be expressed as a sum of three independent processes 
\begin{equation}\label{D:LEVYPROCESSEQUATION}
X_t= X_0 +\mu t +\sigma W_t + \int_{[0,t]\times\R} y\, \tilde N(ds,dy),
\end{equation}
where $\tilde N(ds,dy)=N(ds,dy)-ds\Pi(dy)$ is a compensated Poisson random measure, 
$N(ds,dy)$ being the jump measure constructed from $X$ 
and $\lbrace W_t \rbrace_{t \geq 0}$ is an independent Brownian motion. \\
In the case when $X$ has bounded variation, we denote $\lbrace S_t^+ \rbrace_{t \geq 0}, \ \lbrace S_t^- \rbrace_{t \geq 0}$ the couple of independent subordinantors starting at zero such that for all $t \geq 0$
$$ X_t=x +S_t^+ -S_t^-.$$

For general references on L\'evy processes see \cite{B,KRI,KIS}. We proceed to define the set of \emph{admissible controls}, in this case there is no stochastic differential equation due to the fact that L\'evy processes have stationary increments. 

\begin{definition}\label{D:AdmissiblecontrolsLEVY} 
An \emph{admissible control} is a pair of non-negative $\mathbf{F}$-adapted processes $(U,D)$ such that:
\vskip1mm\par\noindent
{\rm(i)} Each process $U,D\colon\Omega \times \mathbb{R}_+ \rightarrow \mathbb{R}_+$ is right continuous and nondecreasing almost surely.
\vskip1mm\par\noindent
{\rm(ii)} For each $t\geq 0$ the random variables $U_t$ and $D_t$ have finite expectation.
\vskip1mm\par\noindent
We denote by $\mathcal{A}$ the set of admissible control.
\end{definition}
A controlled L\'evy process by the pair $(U,D) \in \mathcal{A}$ is defined as
\begin{equation}\label{D:controlledequation}
X^{U,D}_t= X_t +U_t-D_t, \qquad X_0=x,\ U_{0}=u_0,  \ D_{0}=d_0.
\end{equation}
For $a<b$ let $\lbrace X^{a,b}_t\colon \ t \geq 0 \rbrace$ be a process defined on $(
\Omega,\mathcal{F}, {\bf F},\mathbf{P
}_x)$ that follows \eqref{D:controlledequation} where 
$ U^{a,b}_t,-D^{a,b}_t$ are the respective reflections at $a$ and $b$,  called \emph{reflecting controls}.
At time zero, the reflecting controls are defined as $U^{a,b}_0=(a-x)^+$ and $D^{a,b}_0=(x-b)^+$ and from now on are denoted $u_0^{a,b}$ and $d_0^{a,b}$ respectively. Moreover $ U^{a,b}, \ D^{a,b}$ satisfy

\begin{equation}\label{D:skorhod}
     \int_0^{\infty} (X^{a,b}_t -a)dU^{a,b}_t=0, \quad \int_0^{\infty} (b- X^{a,b}_t )dD^{a,b}_t=0 . 
     \end{equation}
There is an unique strong solution that satisfies \eqref{D:controlledequation} and is also a solution of the Skorokhod \eqref{D:skorhod} problem (see \cite{AAGP}). We remark that for every exponential random variable $e(\epsilon)$ with parameter $\epsilon>0$ independent of the process $X$ and every $t>0$ the random variables $U^{a
,b}_{e(\epsilon)} \ U^{a,b}_t, \ D^{a,b}_{e(\epsilon)}, \  D^{a,b}_t, $ have finite mean as a consequence of \cite[Theorem 6.3]{AAGP}. When the process has bounded variation we also define 
$$(U^{0,0}_t,D^{0,0}_t)=(S^-_t,S^+_t ), \text{ for } t>0, \quad (U^{0,0}_0,D^{0,0}_0)= (-\min \lbrace x,0 \rbrace , \max \lbrace x,0 \rbrace) $$
as a reflecting control. Observe that formula \eqref{D:controlledequation} holds with $X_t^{U^{0,0},D^{0,0}}=0$, for all $t \geq 0$.

From now on $q_u, q_d $ are positive constants, 
and we refer to them as lower barrier cost and upper barrier cost, respectively. We also assume that there is a real continuous map $f$  that models the MFG component.  

\begin{definition}\label{Ch4DefinitionStationarymeasure}
We denote $\mathcal{P}^{\infty} $ the set of random variables $X^{\eta}_{\infty}$ with compact support, such that  there exists $\eta=(U,D) \in \mathcal{A}$ so that for every $ t \geq 0$: 
 $$\lim_{t \to \infty}X_t^{U,D}= X^{\eta}_{\infty},   \text{ in distribution} .$$
 For simplicity we denote $p^{\eta}:= \E \left(f(X_{\infty}^{\eta}) \right). $ Moreover when there is no need to highlight the importance of $\eta$ we simply denote  $p^{\eta} $ as $p$.
\end{definition}

\begin{remark}
Let $a<b,x \in \mathbb{R}$, then the probability flux $\mathbf{P}_x(X^{a,b}_t \in dx)$ converges in total variation to a stationary distribution in $\mathcal{P}^{\infty}$ (see \cite[Section V]{AAGP}).
\end{remark}
We define, when $a <b$ the value $p^{a,b}:=p^{(U^{a,b},D^{a,b})}$ and in the degenerate case $a=b$ we define $p^{a,a}:=f(a) $.

\begin{proposition}
 For every $a \in \mathbb{R}$, the constant random variable $X^a:=a $ belongs to $\mathcal{P}^{\infty}$. 
\end{proposition}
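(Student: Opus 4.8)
Fix $a\in\R$. Since the singleton $\{a\}$ is compact, the constant random variable $X^a=a$ automatically has compact support, so it suffices to exhibit an admissible control $\eta=(U,D)\in\mathcal{A}$ for which $X^{U,D}_t\to a$ in distribution as $t\to\infty$. My plan is to \emph{squeeze} the controlled process into intervals shrinking to $\{a\}$. In the bounded-variation case this can even be done instantaneously: the obvious translate of $(U^{0,0},D^{0,0})=(S^-,S^+)$, namely $U_t=(a-x)^+ +S^-_t$ and $D_t=(x-a)^+ +S^+_t$, keeps $X^{U,D}_t\equiv a$ for all $t>0$, is admissible because $X$ has finite mean (hence so do $S^\pm$), and trivially yields $X^{U,D}_t\to a$. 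For a general L\'evy process — in particular when $\sigma>0$ or the jump part has infinite variation — one cannot freeze the state at a point with controls of finite variation, so a time-inhomogeneous construction is required.

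For the general case I would concatenate reflecting controls on shrinking intervals. Pick $\epsilon_n\downarrow 0$ and deterministic times $0=T_0<T_1<T_2<\cdots$ with $T_n\uparrow\infty$. On each stage $[T_{n-1},T_n)$ let $(U,D)$ coincide with the reflecting control of \eqref{D:skorhod} on the interval $[a-\epsilon_n,a+\epsilon_n]$, restarted from the current state; at the switching instant $T_{n-1}$ I add to $D$ (resp.\ to $U$) the deterministic push, of size at most $\epsilon_{n-1}$, needed to bring the state inside the smaller interval $[a-\epsilon_n,a+\epsilon_n]$. The resulting pair $(U,D)$ is nondecreasing, right-continuous and $\mathbf{F}$-adapted, being glued from finitely many adapted reflecting controls on any bounded horizon together with bounded jumps at the deterministic times $T_n$.

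Convergence is then immediate at the pathwise level: by construction $X^{U,D}_t\in[a-\epsilon_n,a+\epsilon_n]$ for every $t\in[T_{n-1},T_n)$, so $|X^{U,D}_t-a|\le\epsilon_n\to 0$ as $t\to\infty$, giving $X^{U,D}_t\to a$ almost surely and \emph{a fortiori} in distribution, with limit the constant $a$. It remains to check condition {\rm(ii)} of Definition \ref{D:AdmissiblecontrolsLEVY}, that is $\E(U_t),\E(D_t)<\infty$ for each fixed $t$. Since $T_n\uparrow\infty$, only finitely many stages precede any fixed $t$; on each such stage the reflecting control has finite expected increment over a bounded horizon by \cite[Theorem 6.3]{AAGP} (applied conditionally on the starting position, which lies in the compact set $[a-\epsilon_{n-1},a+\epsilon_{n-1}]$), while the switching pushes up to time $t$ are finitely many and bounded. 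Summing finitely many finite terms yields $\E(U_t),\E(D_t)<\infty$, so $\eta\in\mathcal{A}$ and hence $X^a=a\in\mathcal{P}^{\infty}$.

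The main obstacle is precisely this admissibility check: one must confirm that the expected reflection effort accumulated on each bounded stage is finite and controllable uniformly in the (random, but compactly supported) restart position, so that the concatenated control retains integrable marginals. The weak-convergence conclusion is then essentially free from the pathwise confinement. A secondary point worth stating carefully is the dichotomy above: the degenerate single-barrier control realizes $a$ as an exact stationary state only in the bounded-variation regime, whereas in general $a$ arises only as a \emph{degenerate weak limit} produced by a genuinely non-stationary control, which is why the shrinking-interval construction, rather than a single reflecting barrier, is needed.
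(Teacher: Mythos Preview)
Your proposal is correct and follows essentially the same approach as the paper: both constructions concatenate two-sided reflecting controls on a nested sequence of shrinking intervals around $a$, with bounded pushes at deterministic switching times, and deduce almost-sure convergence from the pathwise confinement. The only cosmetic differences are that the paper fixes $T_n=n$ and uses the asymmetric intervals $[-1/n,0]$ (after reducing to $a=0$), whereas you allow general $T_n\uparrow\infty$ and symmetric intervals $[a-\epsilon_n,a+\epsilon_n]$; you also single out the bounded-variation case and make the admissibility check $\E(U_t),\E(D_t)<\infty$ explicit via \cite[Theorem~6.3]{AAGP}, a point the paper leaves implicit. One small wording issue: the push at each $T_{n-1}$ is not literally ``deterministic'' since it depends on the random state $X^{U,D}_{T_{n-1}^-}$, but your argument only uses that it is bounded by $\epsilon_{n-1}$, which is all that is needed.
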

\begin{proof}
    The main idea is to take an smaller and smaller reflection at each time interval. It is clear that it is enough to prove the proposition for $a=0$ and that we can assume $x=0$. For $t \in [0,1]$, let  
    $$Y^n_t :=X_{t+n}-X_n . $$
Observe that $\lbrace Y^n_t \rbrace_{n \geq 0}$ is a sequence of independent L\'evy processes starting at zero that satisfy for every $n$, $Y^n$ is independent of $\sigma (X_u : u \leq n) $. Moreover $X_t$ can be rewritten as:
$$X_t=\sum_{n=0}^{[t]} Y^n_{(t-n) \wedge 1} . $$
We proceed to reflect each process $Y^n$ at $-1/n,0$ in the following way:
    \begin{align*}
    & U^n_t, D^n_t \text{ the reflecting controls at } -1/n, \ 0 \text{ of }Y^n_t , \ t \in [0,1], \ n \geq 0  \\
    &d^0=0, \quad  d^n=(Y^{n-1}_1+U^{n-1}_1-D^{n-1}_1)^+, \ n \geq 1, \\\
    &u^0=0, \quad  u^n=-(Y^{n-1}_1+U^{n-1}_1-D^{n-1}_1)^-, \ n \geq 1, \\\
    \end{align*}
Define the admissible controls $U_t,D_t$ as:
$$U_t:= \sum_{n=0}^{[t]}\left( U^n_{(t-n) \wedge 1 }+u^n \right) , \  D_t:= \sum_{n=0}^{[t]}\left( D^n_{(t-n) \wedge 1 } +d^n \right)  .$$
It is clear $(U_t,D_t)$ is an increasing and adapted process and it is satisfied (except in the set that $X_t$ has a jump in a natural number) that $X_t +U_t-D_t \in [-1/n,0]$ for $t \geq n$. Then we deduce, $\lim_{t \to \infty}X_t+
U_t-D_t=0$ a.s, thus concluding the proof. 
\end{proof}

We define the running cost, now depending on $X^{\eta}_{\infty} \in \mathcal{P}^{\infty} $ too, in a way that for every fixed $\eta$ the map $c(\cdot, \E f\left(X^{\eta}_{\infty} \right)) $ defines a singular control problem as the one in \cite{MO}.

\begin{assumptions}\label{A:Assumptions2MEANFIELDGAME}

We assume that the function
$c \colon \mathbb{R}^2  \rightarrow \mathbb{R}_+$ is continuous, non-negative and

\vskip1mm\par\noindent
{\rm(i)} 
for every $y \in \mathbb{R}$, the function $c(\cdot,y)$ is strictly convex, has a global minimum in the first variable which is reached at zero and for every $r>0$

$$\inf_{(x,y) \in  (-r,r)^c \times \mathbb{R} }c_{xx}(x,y)>0. $$

\vskip1mm\par\noindent
{\rm(ii)}  For each fixed $y \in \mathbb{R}$, there is a pair of positive constants $N$ (independent of $y$) and $M_y$ that satisfy
$$ c(x,y)+ M_y \geq N \vert x  \vert  , \  \forall x \in \mathbb{R} .$$

\vskip1mm\par\noindent
{\rm(iii)} 
For every $(x,y) \in \mathbb{R}^2$
 \[\E_x \bigg(\int_0^{\infty}c_x(X_s,y)e^{-\epsilon s}ds \bigg) < \infty , \ \forall x \in \mathbb{R} .\] 

\begin{remark}
To verify {\rm(iii)} , if for each fixed $y \in \mathbb{R}$, there is a real function $f^{y}$ such that 
$\vert c_{x}(x,y) \vert =f^{y}(\vert x \vert)$ and a  constant $K^{y}>0$ such that 
$f^{y}(x_1+x_2) \leq K^{y} f^{y}(x_1) f^{y}(x_2)$, 
according to \cite{KIS}, Theorem 25.3, Lemma 25.5 and Theorem 30.10,
it is enough to check
$$
\int_{\vert x \vert \geq 1} f^{y}( \vert x\vert ) \Pi(dx) <\infty.
$$
\end{remark}

\begin{definition}\label{D:ergodicvaluefunctionMFG}
Given $x\in\R, \ X^{\eta}_{\infty} \in \mathcal{P}^{\infty}$ and a control $(\hat{U},\hat{D})\in\mathcal{A}$, we define the ergodic cost function

\begin{equation*}
J(x,(\hat{U},\hat{D}), X^{\eta}_{\infty} )
=  \limsup_{T \to \infty} \frac{1}{T} \E_x  \left(   \int_{0}^T c(X^{\hat{U},\hat{D}}_s,p^{\eta})ds    +q_u \hat{U}_T +q_d \hat{D}_T \right),
\end{equation*}
and the ergodic value function 
\[
G(x, X^{\eta}_{\infty} ) = \inf_{(\hat{U},\hat{D}) \in \mathcal{A}} J(x,(\hat{U},\hat{D}), X^{\eta}_{\infty} ).
\]
\end{definition}

\begin{definition}\label{D:discountedgameMFG}
Given $x\in\R, \  X^{\eta}_{\infty}  \in \mathcal{P}^{\infty},$ a control $(\hat{U},\hat{D})\in\mathcal{A}$ and a fixed positive constant  $\epsilon$,
we define the $\epsilon$-discounted cost function 
\begin{multline*}
J_{\epsilon}(x,(\hat{U},\hat{D}), X^{\eta}_{\infty} )  \\ =   \E_x\left( \int_{0}^{\infty}e^{-\epsilon s}(c(X_s^{\hat{U},\hat{D}},p^{\eta})ds + q_ud\hat{U}_s + q_d  d\hat{D}_s\right) +\hat{u}_0 q_u+\hat{d}_0 q_d,  
\end{multline*}
and the $\epsilon$-discounted value function
\[
G_{\epsilon}(x, X^{\eta}_{\infty} )= \inf_{(\hat{U},\hat{D}) \in \mathcal{A}} J_{\epsilon}(x,(\hat{U},\hat{D}), X^{\eta}_{\infty} ).
\]

\end{definition}

\begin{definition}\label{D:EquilibriumMFG}
We say that a  pair of points $(a,b)$ with $a \leq b$ (the inequality strict if $X$ has unbounded variation) is :
\vskip1mm\par\noindent
{\rm(i)} 
an $\epsilon$-discounted equilibrium if $G_{\epsilon}(x, X^{a,b}_{\infty} )=J_{\epsilon}(x,U^{a,b},D^{a,b}, X^{a,b}_{\infty} ) $ for all $x \in \mathbb{R}$,
\vskip1mm\par\noindent
{\rm(ii)}
an ergodic equilibrium if $G(x, X^{a,b}_{\infty} )=J(x,U^{a,b},D^{a,b}, X^{a,b}_{\infty} )$ for all $x \in \mathbb{R}$.
\end{definition}

\subsection{Main results}
 The most important results of this article are the existence of an equilibrium the discounted problem  and the existence of a sequence of $\epsilon$-discounted equilibrium points converging to an ergodic equilibrium point. These results are redacted in the following two Theorems which are proven in the sections
 \ref{C43:FixedpointTheorem} and \ref{C43:MFGequilibriumforergodic}. For $a, b \in \mathbb{R}$, let us define  $\tau(a)$ and $\sigma(b)$ as:
 $$
    \tau(a)=\inf\{t\geq 0\colon X_t\leq a\},
\quad
\sigma(b)=\inf\{t\geq 0\colon X_t\geq b\}. $$

\begin{theorem}\label{T:MEANFIELDDISCOUNTEDPROBLEMSOLUTION}
Under the  hypotheses posed in this section:

\vskip1mm\par\noindent
{\rm(i)} 
 for all $X^{\eta}_{\infty} \in \mathcal{P}^{\infty}, \  G_{\epsilon}(\cdot,X^{\eta}_{\infty})$ is in the domain in the infinitesimal generator,

\vskip1mm\par\noindent
{\rm(ii)}  a pair $(a,b)$ is an $\epsilon$-discounted equilibrium if $u(x):=G_{\epsilon}(x,X^{a,b}_{\infty})$  satisfies that the map
$$ x  \rightarrow \mathcal{L}u(x)- \epsilon u(x)+c(x,p^{a,b}), $$
is equal or smaller than zero, constant zero and increasing in the sets $(-\infty, a] , \ (a,b) $ and $[b,\infty)$ respectively

\vskip1mm\par\noindent
{\rm(iii)} 
There is an $\epsilon$-discounted equilibrium  
 $(a^{\ast}_{\epsilon}, b^{\ast}_{\epsilon})$, with $a^*_{\epsilon} < 0 < b^*_{\epsilon} $, that satisfies $\rm{(ii)}$ and
\begin{align*}
    V(x,X^{a^*_{\epsilon},b^*_{\epsilon}}_{\infty})  = 
     \sup_{a \leq 0} \inf_{b \geq 0}\E_x & \left(\int_0^{\tau(a) \wedge \sigma(b)} c_x(X_s,p^{a^{\ast}_{\epsilon},b^{\ast}_{\epsilon}})e^{-\epsilon s}ds \right. \\ 
     &  +q_d e^{-\epsilon \sigma(b)} \mathbf{1}_{\tau(a) \geq \sigma(b) }  
     -q_ue^{-\epsilon \tau(a)} \mathbf{1}_{\sigma(b) >\tau(a)}  \Bigg)  \\
     =\E_x &   \left(\int_0^{\tau(a^{\ast}_{\epsilon}) \wedge \sigma(b^{\ast}_{\epsilon})}  c_x(X_s,p^{a^{\ast}_{\epsilon},b^{\ast}_{\epsilon}})  e^{-\epsilon s}ds \right.  \\
     &  +q_d e^{-\epsilon \sigma(b^{\ast}_{\epsilon})} \mathbf{1}_{\tau(a^{\ast}_{\epsilon}) \geq \sigma(b^{\ast}_{\epsilon}) }  
     -q_ue^{-\epsilon \tau(a^{\ast}_{\epsilon})} \mathbf{1}_{\sigma(b^{\ast}_{\epsilon}) >\tau(a^{\ast}_{\epsilon})}   \Bigg) .
    \end{align*}

Furthermore $ a^{\ast}_{\epsilon},b^{\ast}_{\epsilon}$   satisfy:
\begin{equation*}
a^{\ast}_{\epsilon}=\sup \{x, V(x,X^{a^*_{\epsilon},b^*_{\epsilon}}_{\infty})=  -q_u \},\ 
b^{\ast}_{\epsilon}=\inf \{x, V(x,X^{a^*_{\epsilon},b^*_{\epsilon}}_{\infty})= q_d \}.    
\end{equation*}
\end{theorem}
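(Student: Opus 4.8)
The plan is to handle the three claims in order, exploiting that for a \emph{fixed} mean-field parameter the problem collapses to the single-agent singular control problem of \cite{MO}, and only afterwards closing the loop with a fixed-point argument. For (i), once $\eta$ is fixed the constant $p^{\eta}=\E f(X^{\eta}_{\infty})$ is frozen, so $G_{\epsilon}(\cdot,X^{\eta}_{\infty})$ is exactly the value function of the two-sided $\epsilon$-discounted singular control problem with running cost $c(\cdot,p^{\eta})$. Assumption \ref{A:Assumptions2MEANFIELDGAME} is tailored so that $c(\cdot,p^{\eta})$ meets the hypotheses of \cite{MO} for each fixed second argument; I would therefore quote the regularity results there, giving that $G_{\epsilon}(\cdot,X^{\eta}_{\infty})$ enjoys the fit and regularity established in \cite{MO} (smooth in the unbounded-variation case, continuous fit in the bounded-variation case), solves the associated equation in the continuation region, and in particular lies in the domain of $\mathcal{L}$.

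For (ii), I would run a \emph{verification} argument. Writing $u:=G_{\epsilon}(\cdot,X^{a,b}_{\infty})$ and $H:=\mathcal{L}u-\epsilon u+c(\cdot,p^{a,b})$, I would apply the It\^o--Dynkin formula for the L\'evy process to $e^{-\epsilon t}u(X^{\hat U,\hat D}_t)$, splitting the increment into the drift--diffusion part (producing $\mathcal{L}u-\epsilon u$), the compensated jump martingale, and the singular contributions of $\hat U,\hat D$. The three-region behaviour of $H$ imposed in (ii), together with the slope constraints that the reflecting structure forces on $u$ outside $(a,b)$, should yield $J_{\epsilon}(x,(\hat U,\hat D),X^{a,b}_{\infty})\ge u(x)$ for every admissible $(\hat U,\hat D)$, with equality for the reflecting pair $(U^{a,b},D^{a,b})$; here the Skorokhod conditions \eqref{D:skorhod} are precisely what make the boundary terms vanish and force each control to act only where its cost bound is tight. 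Since $p^{a,b}$ is computed from the very stationary law $X^{a,b}_{\infty}$ appearing in the cost, this self-consistency is exactly equilibrium in the sense of Definition \ref{D:EquilibriumMFG}.

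For (iii), I would first record the classical bridge between singular control and Dynkin games: differentiating the variational relations of (ii) shows that $V:=\partial_x G_{\epsilon}(\cdot,X^{a,b}_{\infty})$ is the value of the zero-sum optimal-stopping game with running payoff $c_x(X_s,p^{a,b})$, terminal payoff $-q_u$ when the minimiser stops at $\tau(a)$ and $q_d$ when the maximiser stops at $\sigma(b)$, which is the $\sup_{a\le0}\inf_{b\ge0}$ expression in the statement; the optimal levels are recovered as $a^{\ast}_{\epsilon}=\sup\{x\colon V=-q_u\}$ and $b^{\ast}_{\epsilon}=\inf\{x\colon V=q_d\}$. To produce the equilibrium I would set up the mean-field map $\Phi(a,b)=\big(a(p^{a,b}),b(p^{a,b})\big)$ sending a barrier pair to the saddle barriers of the Dynkin game driven by $p^{a,b}$. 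Using the coercivity of Assumption \ref{A:Assumptions2MEANFIELDGAME}(ii) and strict convexity with minimum at $0$ to confine $\Phi$ to a compact convex rectangle $[-R,0]\times[0,R]$ (the minimum at $0$ is what gives $a\le0\le b$), I would verify continuity of $\Phi$ and invoke Brouwer's theorem to obtain a fixed point $(a^{\ast}_{\epsilon},b^{\ast}_{\epsilon})$ with $a^{\ast}_{\epsilon}<0<b^{\ast}_{\epsilon}$, which by (ii) is the desired equilibrium.

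The main obstacle is the continuity (and self-mapping) of $\Phi$, which factors through two stability statements. First, continuity of $(a,b)\mapsto p^{a,b}=\E f(X^{a,b}_{\infty})$ requires stability of the stationary law under perturbation of the reflecting barriers; here I would lean on the total-variation convergence of the probability flux quoted after Definition \ref{Ch4DefinitionStationarymeasure} (from \cite{AAGP}) together with continuity of $f$ and the uniform compactness of the supports. Second, continuity of $p\mapsto(a(p),b(p))$ demands that the Dynkin-game value and its level sets $\{V=-q_u\}$, $\{V=q_d\}$ vary continuously in $p$, and in particular that the barriers not jump; this is where the strict convexity of $c$ and the uniform lower bound $\inf_{(-r,r)^{c}\times\R}c_{xx}>0$ of Assumption \ref{A:Assumptions2MEANFIELDGAME}(i) become essential, since they make $c_x(\cdot,p)$ strictly increasing and thereby pin down single, continuously moving thresholds on each side.
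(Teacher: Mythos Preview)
Your proposal is correct and follows essentially the same route as the paper: reduce (i)--(ii) to the single-agent results of \cite{MO} for a frozen parameter $p^{\eta}$, then obtain (iii) by applying Brouwer to the best-response map $F(a,b)=(a^{\ast}_{p^{a,b},\epsilon},b^{\ast}_{p^{a,b},\epsilon})$ on a compact rectangle, with continuity of $F$ factored into exactly the two sub-problems you name. One small correction on the details: the uniform-in-$p$ confinement to $[-L,0]\times[0,L]$ (Proposition~\ref{P:BOUNDSABWITHMEANFIELD}) is driven by the uniform lower bound on $c_{xx}$ in Assumption~\ref{A:Assumptions2MEANFIELDGAME}(i) rather than the coercivity of (ii), since the constant $M_y$ in (ii) is $y$-dependent; and the continuity of $p\mapsto(a(p),b(p))$ is secured not directly from $c_x$ being increasing but from a strictly positive lower bound on the derivative of the Dynkin value $V_p$ near the barriers (Lemma~\ref{L:MEANFIELDDynkinLipschitz} together with Proposition~\ref{Ch4P:MFGquaternion}), which is what prevents the level sets $\{V_p=-q_u\}$, $\{V_p=q_d\}$ from jumping when $V_{p_n}\to V_p$ uniformly.
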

\begin{remark}
    The statement \rm{(i)} is obtained from \cite[Theorem 1, and propositions 7 \& 8]{MO}. In fact it could have been omitted but we considered it to be informative.
\end{remark}
\begin{theorem}\label{T:ErgodicProblemsolutionMFG}
There is an ergodic equilibrium $(a^{\ast},b^{\ast})$, with $a^*\leq 0 \leq b^*$, that satisfies:
\vskip1mm\par\noindent
{\rm(i)}
There is a sequence $\lbrace (a_{\epsilon_n}^{\ast},b_{\epsilon_n}^{\ast} , \epsilon_n) \rbrace_{n \geq 0} $ converging to $ (a^{\ast},b^{\ast},0)$ when $n \to \infty$, such that $(a_{\epsilon_n}^{\ast},b^{\ast}_{\epsilon_n})$ is an $\epsilon_n$-discounted equilibrium and
\vskip1mm\par\noindent
{\rm(ii)} for every $x\in \mathbb{R}$  
$$\lim_{n \to \infty} \epsilon_n G_{\epsilon_n}(x, X^{a^*_{\epsilon_n},b^*_{\epsilon_n}}_{\infty}) = G(x,X^{a^,b^*}_{\infty}).$$
\end{theorem}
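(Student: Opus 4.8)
The plan is to run a vanishing-discount (Abelian) argument over the family of discounted equilibria supplied by Theorem \ref{T:MEANFIELDDISCOUNTEDPROBLEMSOLUTION}. Fix $\epsilon_n \downarrow 0$ and let $(a^*_{\epsilon_n}, b^*_{\epsilon_n})$, with $a^*_{\epsilon_n} < 0 < b^*_{\epsilon_n}$, be the associated $\epsilon_n$-discounted equilibria. Three things must be shown: (A) the barriers remain in a fixed compact set, so a subsequence converges to some $(a^*, b^*)$ with $a^* \le 0 \le b^*$; (B) along that subsequence $\epsilon_n G_{\epsilon_n}(x, X^{a^*_{\epsilon_n}, b^*_{\epsilon_n}}_{\infty}) \to G(x, X^{a^*, b^*}_{\infty})$ for every $x$; and (C) the limit $(a^*, b^*)$ is an ergodic equilibrium in the sense of Definition \ref{D:EquilibriumMFG}(ii). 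The crux throughout is that the running cost is coupled to the barriers through the mean-field map $p^{a,b} = \E f(X^{a,b}_{\infty})$, so the vanishing-discount limit and the convergence of this coupling must be handled together.

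For (A) I would bound the barriers uniformly in $\epsilon$. By Assumption \ref{A:Assumptions2MEANFIELDGAME}(i) each $c(\cdot, y)$ is strictly convex with minimum at $0$, so $c_x(\cdot, p)$ is negative on $(-\infty, 0)$ and positive on $(0, \infty)$, and the uniform bound $\inf_{(x,y) \in (-r,r)^c \times \R} c_{xx}(x,y) > 0$ forces $|c_x(x,p)|$ to grow at least linearly once $|x|$ leaves a neighbourhood of the origin, uniformly in $p$. Inserting this into the sup-inf representation of $V$ in Theorem \ref{T:MEANFIELDDISCOUNTEDPROBLEMSOLUTION}(iii) shows that the level sets $\{V = -q_u\}$ and $\{V = q_d\}$ cannot recede to $\pm\infty$: otherwise the accumulated term $\int_0^{\tau(a)\wedge\sigma(b)} c_x(X_s,p)e^{-\epsilon s}\,ds$ would dominate the bounded boundary payoffs $\pm q_u, q_d$. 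This produces $\epsilon$-independent constants trapping $a^*_{\epsilon_n}$ and $b^*_{\epsilon_n}$, and Bolzano--Weierstrass yields the convergent subsequence and its limit $(a^*, b^*)$.

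For (B) I would freeze the cost coefficient and invoke regenerative theory. For fixed barriers the reflected process $X^{a,b}$ is positive recurrent with invariant law in $\mathcal{P}^{\infty}$ supported on $[a,b]$ and carries a regeneration structure (returns to a reference set, or the reflection local times at the two boundaries). Writing $R$ for a cycle length, the cycle cost $\E_x[\int_0^R (c(X^{a,b}_s, p)\,ds + q_u\,dU^{a,b}_s + q_d\,dD^{a,b}_s)]$ and $\E_x[R]$ are finite, the reflection amounts being integrable by \cite[Theorem 6.3]{AAGP}; the classical Abelian identity then gives $\epsilon G^{p}_\epsilon \to G^{p}$ as $\epsilon \to 0$ for the problem with coefficient frozen at $p$, exactly the vanishing-discount statement of \cite{MO}. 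The new ingredient is that $p_n := p^{a^*_{\epsilon_n}, b^*_{\epsilon_n}}$ varies with $n$. I would control this through continuity of $(a,b) \mapsto p^{a,b}$, which follows from the weak continuity in $(a,b)$ of the stationary laws (the Remark after Definition \ref{Ch4DefinitionStationarymeasure}, via \cite[Section V]{AAGP}) together with continuity of $f$ on the common compact support; since $c$ is continuous in its second argument, $c(\cdot, p_n) \to c(\cdot, p^*)$ uniformly on the relevant compact, so the $n$-dependent integrand can be replaced by the limiting one up to a vanishing error before the Abelian limit is taken.

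For (C) I would pass to the limit in the equilibrium identity. The discounted equilibrium gives $\epsilon_n G_{\epsilon_n}(x, X^{a^*_{\epsilon_n},b^*_{\epsilon_n}}_{\infty}) = \epsilon_n J_{\epsilon_n}(x, U^{a^*_{\epsilon_n},b^*_{\epsilon_n}}, D^{a^*_{\epsilon_n},b^*_{\epsilon_n}}, X^{a^*_{\epsilon_n},b^*_{\epsilon_n}}_{\infty})$; the right-hand side converges to $J(x, U^{a^*,b^*}, D^{a^*,b^*}, X^{a^*,b^*}_{\infty})$ by the cycle-ratio (Abelian) computation of (B) for this specific reflecting control, while the left-hand side converges to $G(x, X^{a^*,b^*}_{\infty})$ by the frozen-coefficient vanishing-discount limit of (B) combined with the continuity $p_n \to p^*$. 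Equating the two limits shows that the reflecting control at $(a^*, b^*)$ attains $G(\cdot, X^{a^*,b^*}_{\infty})$ against its own induced mean field $p^{a^*,b^*}$, which is Definition \ref{D:EquilibriumMFG}(ii). The main obstacle is precisely this joint passage: the vanishing-discount limit for the infimum $G_{\epsilon_n}$ and the perturbation $p_n \to p^*$ occur simultaneously, so the real work is a uniform-in-$n$ control of the regenerative cost functionals --- uniform integrability of the cycle costs under the varying coefficient --- that legitimises interchanging $\lim_n$ with the Abelian limit $\epsilon_n \to 0$.
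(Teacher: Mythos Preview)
Your proposal is correct and follows essentially the same approach as the paper: uniform barrier bounds via the Dynkin-game representation (the paper's Proposition \ref{P:BOUNDSABWITHMEANFIELD}), an Abelian/regenerative limit for fixed barriers (Proposition \ref{P:convergenceepsilon} and Lemma \ref{L:continuitypowerful}), continuity of $(a,b)\mapsto p^{a,b}$ (Lemma \ref{L:convergencetotalvariation}), and then passing to the limit in the equilibrium identity. The paper organises your step (C) as the three-term telescope $G-\epsilon_n G_{\epsilon_n}(\cdot,p^*)$, $\epsilon_n G_{\epsilon_n}(\cdot,p^*)-\epsilon_n G_{\epsilon_n}(\cdot,p_n)$, $\epsilon_n G_{\epsilon_n}(\cdot,p_n)-J(\cdot,p^*)$, and handles the ``joint passage'' you flag not through uniform integrability of cycle costs but by the simpler bound $\sup_{y\in[-L,L]}|c(y,p^*)-c(y,p_n)|\to 0$; it also treats the boundary points $x\in\{a^*,b^*\}$ separately via the Lipschitz estimate $|G_\epsilon(y)-G_\epsilon(z)|\le (q_u+q_d)|y-z|$.
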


\end{assumptions}

\section{Mean-field equilibrium of the discounted problem}\label{C43:FixedpointTheorem}
Due to the fact that for every $y \in \mathbb{R}$, the map $x \rightarrow c(x,y)$ is twice countinuously differentiable and with positive bounded below second derivate outside intervals containing zero in the first coordinate, useful properties of the associated Dynkin game (see \cite{MO}) are deduced in the first part of this section. Then, in the second part, we use these properties to show that there is an MFG equilibrium with the Brouwer fixed point Theorem. 

\subsection{The adjoint Dynkin game}
The relationship between singular control problems and Dynkin games has been known for a certain time, see \cite{KARATZAS, KW,boetius, GT,GT0} and more specifically \cite{MO}, for a fixed $y \in \mathbb{R}$, the results of that article can be applied when the cost function is the map $x \rightarrow c(x,y)$. We borrow the notations from that paper with the difference that we add the suffix $p$ in every definition to highlight the dependence of $p= \E \left(f(X^{\eta}_{\infty}) \right)$. To be more specific, for $x \in \mathbb{R}$, $\tau, \ \sigma$ stopping times:
\begin{itemize} 
    \item[\rm{i)}] The constant $\epsilon>0$ and the parameter $p^{\eta}$ is fixed and denoted $p$ for simplicity.
    \item[\rm{ii)}] We work with the same filtered probability space and we define the auxiliary functions:
    \begin{align*}
          &      Q_r(\tau,\sigma)=e^{-r\epsilon}\left(
        q_d\mathbf{1}_{\{\tau \geq \sigma\}}e^{-\epsilon\sigma}  
        -q_u\mathbf{1}_{\{\tau < \sigma\}}e^{-\epsilon\tau} 
        \right),  \\
        & M^p_x(\tau,\sigma)=
\E_x\left(\int^{\tau\wedge\sigma}_0 c_x(X_s,p)e^{-\epsilon s}ds+Q_r(\tau,\sigma)\right),   \\
& V_1(x)  = \sup_{\tau} \inf_{\sigma} M_x(\tau, \sigma)=
V_2(x)= \inf_{\sigma} \sup_{\tau} M_x(\tau, \sigma)=:V_{\epsilon,p}(x),
        \end{align*}
where the last equality is proven in \cite[Proposition 3]{MO}.
\item[\rm{iii)}] The function $V_{\epsilon,p}$ is continuously Lipschitz (\cite[Proposition 7]{MO} and there is a pair $ a^*_{\epsilon,p}<0<b^*_{\epsilon,p}$, called $p,\epsilon$-Nash equilibrium points, such that
$$V_{\epsilon,p}(x)= M_x (\tau (a^*_{\epsilon,p}), \sigma(b^*_{\epsilon,p})  ) \ \text{ for every }  x\in \mathbb{R},$$
with 
$$
a^*_{\epsilon,p}=\sup\{x\colon V_{\epsilon,p}(x)=-q_u\},\quad
b^*_{\epsilon,p}=\inf\{x\colon V_{\epsilon,p}(x)=q_d\},
$$
see \cite[Proposition 5]{MO}. Furthermore for every pair of stopping times $(\tau,\sigma)$:
$$  M_x (\tau , \sigma(b^*_{\epsilon,p})  ) \leq M_x (\tau (a^*_{\epsilon,p}), \sigma(b^*_{\epsilon,p})  ) \leq  M_x (\tau (a^*_{\epsilon,p}), \sigma  ). $$
\end{itemize}

 In \cite[Proposition 6]{MO}, it was proven that there was an interval $[-L,L]$ such that $(a^*_{\overline{\epsilon}},b^{*}_{\overline{\epsilon}}) \in [-L,L]$ for every $\overline{\epsilon} \leq \epsilon$. If we were to use this proposition we should highlight the dependence of $L$ with respect to $p$. However we can refine the proposition so that the bound $L$ does not depends on $p$.
 For that endeavor, for $\ell>0$:
\begin{equation*}
\gamma^{\ell}=\inf \left\lbrace t\geq 0\colon  \left\vert X_t-\ell \right\vert \geq \frac{\ell}{2} \right\rbrace.     
\end{equation*}

\begin{proposition}\label{P:BOUNDSABWITHMEANFIELD}
  For $\epsilon$ small enough there is a constant $L$ such that the optimal thresholds of the $\overline{\epsilon} $-Dynkin game  $(a^{\ast}_{\overline{\epsilon},p },b^{\ast }_{\overline{\epsilon},p }) \subset[-L,L]$  for every $\overline{\epsilon}  \leq \epsilon$ and every $X^{\eta}_{\infty} \in \mathcal{P}^{\infty}$. %
\end{proposition}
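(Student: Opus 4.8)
The plan is to argue by contradiction: I will show that if the continuation region $(a^{\ast}_{\overline{\epsilon},p},b^{\ast}_{\overline{\epsilon},p})$ contained a band of the form $(\ell/2,3\ell/2)$ with $\ell$ large, then $V_{\overline{\epsilon},p}$ would be forced above its (uniform) bound. Throughout write $V=V_{\overline{\epsilon},p}$, $a^{\ast}=a^{\ast}_{\overline{\epsilon},p}$, $b^{\ast}=b^{\ast}_{\overline{\epsilon},p}$ and $q=\max\{q_u,q_d\}$. First I record the two uniform ingredients. Testing the game with $\sigma\equiv 0$ gives $M_x(\tau,0)=q_d$ for every $\tau$, and with $\tau\equiv 0$ gives $\inf_\sigma M_x(0,\sigma)=-q_u$; hence $-q_u\le V\le q_d$, so $|V|\le q$ \emph{uniformly} in $x,p,\overline{\epsilon}$. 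The smallness of $\epsilon$ is only needed so that the saddle structure of item (iii) (the pair $a^{\ast}<0<b^{\ast}$ and the associated martingale) is available from \cite{MO}; the estimate below is uniform.

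Next I extract the uniform growth of the gradient from Assumption (i). Fix $r_0>0$ and set $m:=\inf_{|x|\ge r_0,\,y\in\R}c_{xx}(x,y)>0$. Since $c(\cdot,y)$ has its minimum at $0$ we have $c_x(0,y)=0$, and $c_x(\cdot,y)$ is nondecreasing; integrating $c_{xx}\ge m$ on $|x|\ge r_0$ yields $c_x(x,p)\ge m(x-r_0)$ for $x\ge r_0$ and $c_x(x,p)\le -m(|x|-r_0)$ for $x\le -r_0$, \emph{for every} $p$. In particular, on $(\ell/2,3\ell/2)$ with $\ell\ge 4r_0$ one has $c_x(\cdot,p)\ge\kappa_\ell:=m(\ell/2-r_0)$, and $\kappa_\ell\to\infty$, uniformly in $p$.

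The heart of the argument combines these with $\gamma^\ell$. Suppose, for contradiction, that $(\ell/2,3\ell/2)\subseteq(a^{\ast},b^{\ast})$. Since reaching $a^{\ast}<\ell/2$ or $b^{\ast}>3\ell/2$ forces a prior exit of $(\ell/2,3\ell/2)$, starting at $\ell$ we have $\gamma^\ell\le\tau(a^{\ast})\wedge\sigma(b^{\ast})$. The saddle property makes $\int_0^{t}c_x(X_s,p)e^{-\overline{\epsilon} s}\,ds+e^{-\overline{\epsilon} t}V(X_t)$ a martingale up to $\tau(a^{\ast})\wedge\sigma(b^{\ast})$ (equivalently $V$ solves $\mathcal Lu-\overline{\epsilon}u+c_x(\cdot,p)=0$ on the continuation region, so this is Dynkin's formula); optional sampling at $\gamma^\ell$ gives
\[
V(\ell)=\E_\ell\Big(\int_0^{\gamma^\ell}c_x(X_s,p)e^{-\overline{\epsilon} s}\,ds+e^{-\overline{\epsilon}\gamma^\ell}V(X_{\gamma^\ell})\Big).
\]
Using $c_x\ge\kappa_\ell$ on $[0,\gamma^\ell)$, then $V\ge -q$ and $V(\ell)\le q$, we get $q\ge V(\ell)\ge \kappa_\ell\,\E_\ell(\int_0^{\gamma^\ell}e^{-\overline{\epsilon} s}ds)-q$, whence
\[
\E_\ell\Big(\int_0^{\gamma^\ell}e^{-\epsilon s}\,ds\Big)\ \le\ \E_\ell\Big(\int_0^{\gamma^\ell}e^{-\overline{\epsilon} s}\,ds\Big)\ \le\ \frac{2q}{\kappa_\ell},
\]
the first inequality because $\overline{\epsilon}\le\epsilon$.

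Finally I rule this out for large $\ell$. By spatial homogeneity $\gamma^\ell$ has, under $\mathbf{P}_\ell$, the law of the first exit of $X$ from $0$ out of $(-\ell/2,\ell/2)$; as $\ell\to\infty$ this time tends to $+\infty$ a.s., so $\E_\ell(\int_0^{\gamma^\ell}e^{-\epsilon s}ds)=\epsilon^{-1}(1-\E_\ell e^{-\epsilon\gamma^\ell})\to\epsilon^{-1}$, while $2q/\kappa_\ell\to 0$. The displayed inequality therefore fails once $\ell\ge\ell_0$, with $\ell_0$ depending only on $\epsilon,q,m,r_0$ and the law of $X$—never on $p$ or $\overline{\epsilon}$. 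Hence $(\ell_0/2,3\ell_0/2)\not\subseteq(a^{\ast},b^{\ast})$; as $a^{\ast}<0<\ell_0/2$ always holds, this forces $b^{\ast}\le 3\ell_0/2=:L$. The bound $a^{\ast}\ge -L$ follows verbatim from the symmetric estimate, using $c_x(\cdot,p)\le-\kappa_\ell$ on $(-3\ell/2,-\ell/2)$ and $V\le q$. I expect the only delicate point to be the rigorous justification of the optional‑sampling identity for the merely Lipschitz, nonlocal value function up to the random time $\gamma^\ell$, which I would anchor on the martingale characterisation of the saddle in \cite{MO} rather than on $C^2$ regularity, together with the (standard) fact that $\gamma^\ell\to\infty$. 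Granting these, the decisive observation is that the sole $p$‑dependent quantity, the size of $c_x$ on the band, is bounded below uniformly by Assumption (i)—precisely what removes the dependence of $L$ on $p$.
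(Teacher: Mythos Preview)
Your proof is correct and follows the same contradiction scheme as the paper's: both use the uniform bound $-q_u\le V\le q_d$, the Dynkin/optional-sampling identity $V(\ell)=\E_\ell\big(\int_0^{\gamma^\ell}c_x(X_s,p)e^{-\overline\epsilon s}ds+e^{-\overline\epsilon\gamma^\ell}V(X_{\gamma^\ell})\big)$ valid on the continuation region, and the uniform-in-$p$ positivity of $c_{xx}$ from Assumption~\ref{A:Assumptions2MEANFIELDGAME}(i) to force a contradiction when the band $(\ell/2,3\ell/2)$ lies inside $(a^\ast,b^\ast)$. The one genuine difference is quantitative: the paper fixes a lower bound $c_x\ge N$ and then needs $\epsilon$ small so that $\E_\ell\big((1-e^{-\overline\epsilon\gamma^\ell})/\overline\epsilon\big)$ is close enough to $\E_\ell(\gamma^\ell)$ to exceed $(q_u+q_d)/N$, which is precisely the origin of the clause ``for $\epsilon$ small enough'' in the statement; you instead let $\kappa_\ell\to\infty$ and compare against the $\overline\epsilon$-independent quantity $\E_\ell\int_0^{\gamma^\ell}e^{-\epsilon s}ds\to 1/\epsilon$, so your $\ell_0$ (hence $L$) depends on $\epsilon$ but the argument goes through for \emph{every} fixed $\epsilon>0$. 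This is a mild sharpening at no extra cost; otherwise the two proofs coincide.
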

\begin{proof}
From Assumptions \ref{A:Assumptions2MEANFIELDGAME}, for $\ell>2$ satisfying 
$$ \ell\left( \inf_{(x,y)\in (-1/2,1/2)^c \times \mathbb{R}  }c_{xx}(x,y) \right) >4N,  $$
we have, for all $x\geq \ell/2$ that,
$
c_x(x,p)= \int_0^x c_{xx}(u,p) du >  N.
$
Then for all $\overline{\epsilon} >0$:
\begin{equation}\label{E:boundDynkin0}
\E_{\ell} \left(\int_0^{\gamma^\ell}c_x(X_s,p)e^{-\overline{\epsilon}  s}ds \right)
\geq N\E_{\ell} \left( \frac{1-e^{-\overline{\epsilon}  \gamma^{\ell}}}{\overline{\epsilon} } \right).
\end{equation}
On the other hand, as
$$
\E_{\ell}(\gamma^\ell)=\E\left(\inf\left\lbrace t\geq 0\colon |X_t|\geq \frac{\ell}{2} \right\rbrace \right)\to\infty\text{ as $\ell\to\infty$},
$$
we find an $\ell$ s.t.
\begin{equation}\label{E:boundDynkin1MEANFIELD}
\E_{\ell} (\gamma^{\ell})  >\frac{1}{N}(q_u+q_d).
\end{equation}
For a fixed $\ell$ satisfying \eqref{E:boundDynkin1MEANFIELD}, using dominated convergence and the fact $\E_{\ell}(\gamma^{\ell})< \infty$ when $X$ is not the null process, we have that 
$$ \E_{\ell} \left( \frac{1-e^{-\overline{\epsilon}  \gamma^{\ell}}}{\overline{\epsilon} } \right) \nearrow  \E_{ \ell}(\gamma^{\ell}) \ \text{ as} \ \overline{\epsilon}  \to 0 .$$
Thus, using \eqref{E:boundDynkin1MEANFIELD} we can take $\epsilon$ small enough such that for every $\overline{\epsilon}  \leq \epsilon$ and every $p \in \mathcal{P}^{\infty}$ we have
\begin{equation}\label{E:boundDynkin0MEANFIELD}
\E_{\ell} \left(\int_0^{\gamma^\ell}c_x(X_s,p)e^{-\overline{\epsilon}  s}ds \right)
\geq q_d+q_u.
\end{equation}

We now take $L:= 3\ell/2$ and prove that for every $X_{\infty}^{\eta} \in \mathcal{P}^{\infty}, \ \overline{\epsilon}  \leq \epsilon, \ b^{\ast}_{\overline{\epsilon},p^{\eta} }\leq L$ (for simplicity let $p:=p^\eta$).
Assume, by contradiction, that $b^{\ast}_{\overline{\epsilon},p }>L$, what implies $q_d>V_p(\ell)$. 
Now we have 
\begin{multline*}
q_d > V_p(\ell)
=\E_{\ell} \left(  \int_0^{\gamma^\ell}c_x(X_s,p)e^{-\overline{\epsilon}  s}ds   
+e^{-\overline{\epsilon}  \gamma_\ell}V_p(X_{\gamma_\ell})\right)
\geq \\  \E_{\ell} \left(   \int_0^{\gamma^\ell}  c_x(X_s,p)e^{-\overline{\epsilon}  s}ds \right) -q_u \geq q_d.
\end{multline*}
by \eqref{E:boundDynkin0MEANFIELD}, what is a contradiction. 
The other bound is analogous, thus we obtain an $\overline{\ell}>0$ such that for every $\overline{\epsilon} \leq \epsilon$ we have $a^{\ast}_{\overline{\epsilon},}>-3 \overline{\ell}/2$. By taking $L= (3/2)\max(\overline{\ell},\ell) $, we conclude the proof.
\end{proof}
For the following statements,  hitting times of translated sets play an important role.
If $\gamma$ is defined by
\[ 
\gamma = \inf \lbrace t\geq 0\colon X_t \in A \rbrace,
\]
then the stopping time $\gamma_y$ is 
\[ 
\gamma_y= \inf \lbrace t\geq 0\colon X_t \in A -y\rbrace.
\]
We proceed to prove that the $p,\epsilon$-Nash equilibrium is unique.
\begin{lemma}\label{L:MEANFIELDDynkinLipschitz}
If there is a couple $(A,B)$ such that the $(\tau^A,\sigma^B)$ defined as
$$\aligned
\tau(A) =\inf\{t\geq 0\colon X_t\leq A\}, \quad
\sigma(B) =\inf\{t\geq 0\colon X_t\geq B\}
\endaligned $$
is a Nash Equilibrium then
\begin{equation}\label{eq:MEANFIELDlimsupderivate}
\liminf_{h \to 0^+}{V_p(x+h)-V_p(x)\over h}\\ \geq  \lim_{h \to 0^+} \E\int_0^{\tau(A)_{x}\wedge \sigma(B)_{x+h}}c_{xx}(x+X_s,p) e^{-\epsilon s}ds.
\end{equation}

\end{lemma}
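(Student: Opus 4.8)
The plan is to combine the translation invariance of the Lévy process $X$ with the saddle-point character of the Nash equilibrium $(\tau(A),\sigma(B))$, the decisive trick being to evaluate the games started at $x$ and at $x+h$ at one and the \emph{same} pair of stopping times, so that the terminal payoffs cancel identically. I realise both games on the canonical space with $X_0=0$, so the process issued from $y$ is $y+X_s$ and, with the excerpt's notation, $\tau(A)_y=\inf\{s\ge 0\colon X_s\le A-y\}$ and $\sigma(B)_y=\inf\{s\ge 0\colon X_s\ge B-y\}$. Set $\tau_0:=\tau(A)_x$ (independent of $h$) and $\sigma_1:=\sigma(B)_{x+h}$; since $h>0$ the threshold $B-x-h$ is easier to reach, whence $\sigma_1\uparrow\sigma_0:=\sigma(B)_x$ monotonically as $h\downarrow 0$.

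First I would invoke the saddle inequalities twice, in opposite directions. At the point $x+h$, where $\sigma(B)=\sigma_1$, the maximiser may deviate from $\tau(A)$ to $\tau_0$, which can only decrease the value, giving $V_p(x+h)\ge M_{x+h}(\tau_0,\sigma_1)$; at the point $x$ the minimiser may deviate from $\sigma(B)=\sigma_0$ to $\sigma_1$ while the maximiser keeps $\tau(A)=\tau_0$, which can only increase the value, giving $V_p(x)\le M_{x}(\tau_0,\sigma_1)$. Subtracting and using that the terminal term $Q_r(\tau_0,\sigma_1)$ depends only on the stopping times and not on the starting point, the two terminal contributions cancel and only the running parts over the common horizon $\tau_0\wedge\sigma_1=\tau(A)_x\wedge\sigma(B)_{x+h}$ survive:
\begin{equation*}
V_p(x+h)-V_p(x)\ \ge\ \E\int_0^{\tau_0\wedge\sigma_1}\bigl[c_x(x+h+X_s,p)-c_x(x+X_s,p)\bigr]e^{-\epsilon s}\,ds .
\end{equation*}

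Next I would divide by $h$ and let $h\to0^{+}$. Since $c(\cdot,p)\in C^2$, I rewrite the integrand difference as $\tfrac{1}{h}\int_0^h c_{xx}(x+v+X_s,p)\,dv$, which tends to $c_{xx}(x+X_s,p)$. The limit passage is justified by confinement: for $s<\tau_0\wedge\sigma_1$ one has $x+X_s>A$ and $x+X_s<B-h<B$, so $x+v+X_s\in(A,B)$ for $v\in[0,h]$, and the continuous $c_{xx}(\cdot,p)$ is bounded by some constant $C$ on $[A,B]$, providing the $h$-independent dominating function $C\,e^{-\epsilon s}\mathbf{1}_{\{s<\tau_0\wedge\sigma_0\}}$. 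Dominated convergence (together with $\tau_0\wedge\sigma_1\uparrow\tau_0\wedge\sigma_0$) then yields
\begin{equation*}
\liminf_{h\to0^{+}}\frac{V_p(x+h)-V_p(x)}{h}\ \ge\ \E\int_0^{\tau_0\wedge\sigma_0}c_{xx}(x+X_s,p)\,e^{-\epsilon s}\,ds ,
\end{equation*}
and, since $c_{xx}\ge0$ and $\tau_0\wedge\sigma_1\uparrow\tau_0\wedge\sigma_0$, monotone convergence identifies the right-hand side with $\lim_{h\to0^{+}}\E\int_0^{\tau(A)_x\wedge\sigma(B)_{x+h}}c_{xx}(x+X_s,p)e^{-\epsilon s}ds$, which is exactly \eqref{eq:MEANFIELDlimsupderivate}.

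The step I expect to be genuinely delicate is not the analytic limit, which is tamed by the $C^2$-regularity of $c$ and the confinement of $x+X_s$ to $(A,B)$ before $\tau_0\wedge\sigma_1$, but rather the correct selection of the deviating pair. One must apply the saddle inequalities in exactly this order (the maximiser deviating at $x+h$, the minimiser at $x$) so that both cost functionals are read off the \emph{single} pair $(\tau(A)_x,\sigma(B)_{x+h})$; with any other pairing a residual integral over $[\tau_0,\tau(A)_{x+h}\wedge\sigma_1]$ or a non-cancelling boundary term survives, and controlling its contribution to the difference quotient, where $c_x$ is evaluated near the barrier, would be problematic.
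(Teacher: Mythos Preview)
Your proof is correct and follows essentially the same route as the paper: evaluate the saddle inequalities at $x+h$ (maximiser deviating) and at $x$ (minimiser deviating) with the \emph{common} translated pair $(\tau(A)_x,\sigma(B)_{x+h})$ so that the terminal terms $Q_r$ cancel, then pass to the limit in the running integral. The only cosmetic differences are that the paper uses the mean-value theorem (a random $\theta\in[0,1]$) where you write $\tfrac1h\int_0^h c_{xx}(x+v+X_s,p)\,dv$, and your confinement/domination argument for the limit is more explicit than the paper's one-line appeal to continuity of $c_{xx}$ and monotonicity of $\sigma(B)_{x+h}$.
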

\begin{proof}
Take $x\in\R$ and $h>0$. We obtain the bound
$$
\aligned
V_p(x+h)  &  \geq  \E_{x+h}\left( \int_0^{\tau(A)_{-h}\wedge\sigma(B)}c_x(X_s,p) e^{-\epsilon s}\ ds   +Q(\tau(A)_{-h},\sigma(B))\right) 
\\
        &=\E\left( \int_0^{\tau(A)_{x}\wedge\sigma(B)_{x+h}}c_x(x+h+X_s,p)e^{-\epsilon s}ds +Q(\tau(A)_x,\sigma(B)_{x+h})\right).\\
\endaligned
$$
Similarly
$$
\aligned
V_p(x)  &\leq  \E_{x}\left(\int_0^{\tau(A) \wedge\sigma(B)_{h}}c_x(X_s,p)e^{-\epsilon s}\,ds+Q(\tau(A),\sigma(B)_h)\right)\\
        &=\E\left( \int_0^{\tau(A)_{x}\wedge\sigma(B)_{x+h}}c_x(x+X_s,p)e^{-\epsilon s}\,ds+Q(\tau(A)_{x},\sigma(B)_{x+h})\right).\\
\endaligned
$$

Subtracting, and applying the mean value theorem, we get
$$
{V_p(x+h)-V_p(x)\over h}\geq \E\left(\int_0^{\tau(A)_{x}\wedge\sigma(B)_{x+h}}c_{xx}(x+X_s+\theta h,p) e^{-\epsilon s}\,ds\right),
$$
where $0\leq\theta\leq 1$. We conclude that the inequality \eqref{eq:MEANFIELDlimsupderivate} holds by taking $\liminf_{h \to 0}$ and observing that the limits
\begin{align*}
 &   \lim_{h \to 0}\E\left(\int_0^{\tau(A)_{x}\wedge\sigma(B)_{x+h}}c_{xx}(x+X_s+\theta h,p) e^{-\epsilon s}\,ds\right), \\
&    \lim_{h \to 0}\E\left(\int_0^{\tau(A)_{x}\wedge\sigma(B)_{x+h}}c_{xx}(x+X_s,p) e^{-\epsilon s}\,ds\right),\end{align*}

exist and are equal due to $c_{xx}$ being continuous and $\sigma(B)_{x+h}$ being monotone in $h$.
\end{proof}

From the previous lemma we deduce that if there was a couple $(A,B) \neq (a^{\ast}_{p,\epsilon},b^{\ast}_{p,\epsilon})$ that was also a Nash equilibrium, we would have different representation for $V_p$ such that the stopping region is different. Thus there would be a point $x$ in one stopping region and in other continuation region. This implies $V_p'(x)>0$ according to one representation and $V_p'(x)=0$ according to the other representation (recall $V_p'$ is Lipschitz thus almost everywhere differentiable). We write this result in the next corollary: 
\begin{corollary}\label{Ch4:UniquenessDynkinMFG}
The only couple of points $(A,B)$ such that $\tau(A) , \sigma(B)$ defined as in the previous Lemma is a Nash Equilibrium is $(a^{\ast}_{p,\epsilon},b^{\ast}_{p,\epsilon})$.
\end{corollary}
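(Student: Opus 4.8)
The plan is to argue by contradiction: I assume there is a second Nash equilibrium $(A,B)\neq(a^{\ast}_{p,\epsilon},b^{\ast}_{p,\epsilon})$ and extract from the previous Lemma a point at which $V_p$ must simultaneously be locally constant and strictly increasing, as anticipated in the paragraph preceding the statement. First I would record the boundary behaviour that \emph{any} equilibrium of the form $(\tau(A),\sigma(B))$ forces on $V_p$. By the saddle-point property of an equilibrium, $V_p(x)=M_x(\tau(A),\sigma(B))$ for every $x$. Evaluating this at $x\leq A$ gives $\tau(A)=0<\sigma(B)$, so the running integral $\int_0^{\tau(A)\wedge\sigma(B)}$ vanishes and $Q(\tau(A),\sigma(B))=-q_u$, whence $V_p(x)=-q_u$; symmetrically $V_p(x)=q_d$ for $x\geq B$. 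Since $V_p$ is the common value of the game, it does not depend on which equilibrium realises it, and recalling $a^{\ast}_{p,\epsilon}=\sup\{x\colon V_p(x)=-q_u\}$ and $b^{\ast}_{p,\epsilon}=\inf\{x\colon V_p(x)=q_d\}$, the inclusions $(-\infty,A]\subseteq\{V_p=-q_u\}$ and $[B,\infty)\subseteq\{V_p=q_d\}$ yield $A\leq a^{\ast}_{p,\epsilon}$ and $B\geq b^{\ast}_{p,\epsilon}$.

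Next, if $(A,B)\neq(a^{\ast}_{p,\epsilon},b^{\ast}_{p,\epsilon})$, at least one inequality is strict; say $A<a^{\ast}_{p,\epsilon}$ (the case $B>b^{\ast}_{p,\epsilon}$ being symmetric). I would pick $x$ with $A<x<a^{\ast}_{p,\epsilon}$; since $a^{\ast}_{p,\epsilon}\leq b^{\ast}_{p,\epsilon}\leq B$, this $x$ lies strictly inside the continuation region $(A,B)$ of the equilibrium $(A,B)$. On one hand, because $V_p\equiv -q_u$ on $(-\infty,a^{\ast}_{p,\epsilon}]$, for all small $h>0$ we have $V_p(x+h)=V_p(x)=-q_u$, so the right difference quotient is identically zero. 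On the other hand, the Lemma applied to the equilibrium $(A,B)$ gives
$$ 0=\liminf_{h\to 0^+}\frac{V_p(x+h)-V_p(x)}{h}\ \geq\ \lim_{h\to 0^+}\E\int_0^{\tau(A)_x\wedge\sigma(B)_{x+h}}c_{xx}(x+X_s,p)\,e^{-\epsilon s}\,ds, $$
so it remains only to show that the right-hand side is strictly positive, which is the contradiction.

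The main obstacle is precisely this strict positivity. Here I would combine two facts. The exit time $\tau(A)_x\wedge\sigma(B)_{x+h}$ is almost surely positive because the process starts strictly inside the open interval $(A,B)$. Moreover, since $a^{\ast}_{p,\epsilon}<0$, every admissible $x$ satisfies $x<0$, and because $X$ starts at the origin, for $s$ in a short initial interval $x+X_s$ stays bounded away from $0$ with high probability; on that event Assumption \ref{A:Assumptions2MEANFIELDGAME}(i) provides the uniform-in-$y$ lower bound $c_{xx}(x+X_s,p)\geq\inf_{(z,y)\in(-r,r)^c\times\mathbb{R}}c_{xx}(z,y)>0$ for a suitable $r$. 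Bounding the integral below by its contribution on that initial interval shows the limit is strictly positive, contradicting the displayed equality. The symmetric argument at the upper boundary rules out $B>b^{\ast}_{p,\epsilon}$, forcing $(A,B)=(a^{\ast}_{p,\epsilon},b^{\ast}_{p,\epsilon})$ and proving the corollary.
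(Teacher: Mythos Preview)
Your proof is correct and follows the same contradiction argument sketched in the paper (the paragraph immediately preceding the corollary): locate a point $x$ in the stopping region of one equilibrium but the continuation region of the other, then use the Lemma to force $V_p$ to be simultaneously flat and strictly increasing at $x$. Your version is simply more explicit---working directly with the $\liminf$ of the difference quotient and spelling out why the right-hand side of the Lemma's bound is strictly positive---but the underlying idea is identical.
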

\subsection{Fixed point}
To prove that there is an equlibrium in the discounted case, as usual in the literature, we use a fixed point theorem. To be more specific, we use Brouwer's  Fixed Point-Theorem (every continuous function from a compact convex subset of a Euclidean space to itself has a fixed point, see \cite[Section 6]{Brouwer}). We need some properties before defining the adequate function.
In this subsection $L$ is defined as in Proposition \ref{P:BOUNDSABWITHMEANFIELD}. In the following proposition we use implicitly the fact that the exit time of intervals for non-null L\'evy processes have finite mean..

\begin{proposition}\label{P:contintuityinP} For every $r, \epsilon>0, \ X^{\mu_1}_{\infty},X^{\mu_2}_{\infty} \in \mathcal{P}^{\infty}$,  if  $$\sup_{x \in [-L,L]} \vert c'(x, p^{\mu_1}) -c'(x,p^{\mu_2}) \vert  \leq r,$$ then 
$$\vert \vert V_{p^{\mu_1}}-V_{p^{\mu_2}} \vert \vert_{\infty}\leq r \sup_{x \in [-L,L]}\E_x ( \tau_{-L} \wedge \sigma_L) , $$ 
with $V_{p^{\mu_1}},V_{p^{\mu_2}}$ the $(\epsilon,p^{\mu_1}) , \ (\epsilon, p^{\mu_2})$ value functions respectively.
\end{proposition}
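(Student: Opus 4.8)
The plan is to exploit the saddle-point structure of the adjoint Dynkin game recorded in item iii) above and to compare the two value functions through a single \emph{crossed} pair of stopping times. Write $p_i:=p^{\mu_i}$ for $i=1,2$, and let $\tau_i:=\tau(a^*_{\epsilon,p_i})$ and $\sigma_i:=\sigma(b^*_{\epsilon,p_i})$ denote the optimal thresholds of the $p_i,\epsilon$-game, which by Proposition \ref{P:BOUNDSABWITHMEANFIELD} satisfy $a^*_{\epsilon,p_i},b^*_{\epsilon,p_i}\in[-L,L]$. Applying the two saddle inequalities of item iii) to each parameter separately, with the choices $\sigma=\sigma_2$ in the game for $p_1$ and $\tau=\tau_1$ in the game for $p_2$, I obtain
\begin{equation*}
V_{p_1}(x)\leq M^{p_1}_x(\tau_1,\sigma_2),\qquad M^{p_2}_x(\tau_1,\sigma_2)\leq V_{p_2}(x),
\end{equation*}
and therefore $V_{p_1}(x)-V_{p_2}(x)\leq M^{p_1}_x(\tau_1,\sigma_2)-M^{p_2}_x(\tau_1,\sigma_2)$.

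Next I would observe that the terminal payoff $Q(\tau_1,\sigma_2)$ does not depend on the parameter $p$, so it cancels in the difference, leaving only the running cost
\begin{equation*}
V_{p_1}(x)-V_{p_2}(x)\leq \E_x\left(\int_0^{\tau_1\wedge\sigma_2}\big(c_x(X_s,p_1)-c_x(X_s,p_2)\big)e^{-\epsilon s}\,ds\right).
\end{equation*}
The crucial point, which I expect to be the real content of the argument, is that on the event $\{s<\tau_1\wedge\sigma_2\}$ the path has not yet crossed $a^*_{\epsilon,p_1}$ from above nor $b^*_{\epsilon,p_2}$ from below, so that $X_s\in(a^*_{\epsilon,p_1},b^*_{\epsilon,p_2})\subseteq[-L,L]$. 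This is exactly what the \emph{uniform} (in $p$) bound of Proposition \ref{P:BOUNDSABWITHMEANFIELD} buys us: it forces the state to remain in $[-L,L]$ throughout the integration window even though the two games stop at different thresholds, and it is the reason the crossed pair $(\tau_1,\sigma_2)$ rather than either native pair must be used. Consequently the hypothesis $\sup_{x\in[-L,L]}|c_x(x,p_1)-c_x(x,p_2)|\leq r$ applies pointwise inside the integral.

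Finally, using $e^{-\epsilon s}\leq 1$ together with $a^*_{\epsilon,p_1}\geq-L$ and $b^*_{\epsilon,p_2}\leq L$, which give the pathwise domination $\tau_1\wedge\sigma_2\leq\tau_{-L}\wedge\sigma_L$, I would conclude
\begin{equation*}
V_{p_1}(x)-V_{p_2}(x)\leq r\,\E_x(\tau_1\wedge\sigma_2)\leq r\,\E_x(\tau_{-L}\wedge\sigma_L).
\end{equation*}
Interchanging the roles of $p_1$ and $p_2$ (now crossing $(\tau_2,\sigma_1)$) yields the same bound for $V_{p_2}(x)-V_{p_1}(x)$, hence for $|V_{p_1}(x)-V_{p_2}(x)|$. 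To pass to the supremum norm I note that for $x\notin[-L,L]$ one of the crossed stopping times equals $0$, so both the difference and $\E_x(\tau_{-L}\wedge\sigma_L)$ vanish, while for $x\in[-L,L]$ one has $\E_x(\tau_{-L}\wedge\sigma_L)\leq\sup_{x\in[-L,L]}\E_x(\tau_{-L}\wedge\sigma_L)$; taking the supremum over $x$ completes the proof, the finiteness of the right-hand side being guaranteed by the finite expected exit time from intervals for non-null L\'evy processes noted before the statement.
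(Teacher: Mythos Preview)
Your proposal is correct and follows essentially the same approach as the paper: both exploit the saddle-point inequalities for the adjoint Dynkin game with the crossed pair of optimal stopping times, observe that the terminal payoff $Q$ cancels, and bound the remaining running-cost integral using that the process stays in $[-L,L]$ before $\tau_1\wedge\sigma_2$ together with $\tau_1\wedge\sigma_2\leq\tau_{-L}\wedge\sigma_L$. The paper handles the case $x\notin(-L,L)$ by noting directly that $V_{p_1}(x)=V_{p_2}(x)$ there (both equal $q_d$ or $-q_u$), which is equivalent to your stopping-time argument.
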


\begin{proof}
Fix $x \in \mathbb{R}$. Observe $V_{p^{\mu_1}}(x)=V_{p^{\mu_2}}(x)$ if $x \notin (-L,L)$. Therefore we can assume $x \in (-L,L)$. Assume $V_{p^{\mu_1}} (x) \leq V_{p^{\mu_2}}(x)$. On one hand, using the fact for $i,j \in  \lbrace 1,2 \rbrace, \ \tau^{\ast}_{a_{ p^{\mu_i},\epsilon}} \wedge \sigma^{\ast}_{b_{p^{\mu_j},\epsilon}}$ is a stopping time smaller or equal than the first exit of the interval $[-L,L]$:

\begin{multline}
\vert M_{x,p^{\mu_1}}(\tau^{\ast}_{a_{p^{\mu_i},\epsilon}},\sigma^{\ast}_{b_{p^{\mu_j},\epsilon}})- M_{x,p^{\mu_2}}(\tau^{\ast}_{a_{p^{\mu_i},\epsilon}},\sigma^{\ast}_{b_{p^{\mu_j},\epsilon}}) \vert \\
= \left\vert  \E_{x} \left( \int_0^{\tau^{\ast}_{a_{p^{\mu_i},\epsilon}}\wedge \sigma^{\ast}_{b_{p^{\mu_j},\epsilon}}}( c'(X_{s},p^{\mu_1})-c'(X_s,p^{\mu_2}))e^{-\epsilon s}ds  \right) \right\vert \\ \leq r \sup_{x \in [-L,L]}\E_x ( \tau_{-L} \wedge \sigma_L).
\end{multline}

Therefore, 
\begin{align*}
V_{p^{\mu_2}}(x) & -  r \sup_{x \in [-L,L]}\E_x ( \tau_{-L} \wedge \sigma_L)
\\& = M_{x,p^{\mu_2}}(\tau^{\ast}_{a_{p^{\mu_2},\epsilon}},\sigma^{\ast}_{b_{p^{\mu_2},\epsilon}}) -  r \sup_{x \in [-L,L]}\E_x ( \tau_{-L} \wedge \sigma_L) 
\\& \leq M_{x,p^{\mu_2}}(\tau^{\ast}_{a_{p^{\mu_2},\epsilon}},\sigma^{\ast}_{b_{p^{\mu_1},\epsilon}}) -  r \sup_{x \in [-L,L]}\E_x ( \tau_{-L} \wedge \sigma_L)
\\ & \leq  M_{x,p^{\mu_1}}(\tau^{\ast}_{a_{p^{\mu_2},\epsilon}},\sigma^{\ast}_{b_{p^{\mu_1},\epsilon}})  \leq  M_{x,p^{\mu_1}}(\tau^{\ast}_{a_{p^{\mu_1},\epsilon}},\sigma^{\ast}_{b_{p^{\mu_1},\epsilon}})  =V_{p^{\mu_1}}(x).  
\end{align*}
\end{proof}

\begin{lemma}\label{L:convergencetotalvariation}
If $\lbrace (a_n,b_n) \rbrace_n$ is a sequence that converges to $(a,b)$, with $a\leq 0 \leq b$, then $ p^{a_n,b_n} \rightarrow p^{a,b}$ when $n \to \infty$.
\end{lemma}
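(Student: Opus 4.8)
The plan is to realize all the doubly reflected processes on a single probability space driven by the same Lévy path, derive a uniform-in-time pathwise bound on the distance between the process reflected on $[a_n,b_n]$ and the one reflected on $[a,b]$, and then interchange the limit $n\to\infty$ with the ergodic limit $t\to\infty$. Throughout I would fix a single càdlàg trajectory of $X$ and apply to it the two-sided Skorokhod reflection map $\Gamma_{[a,b]}$, which by \cite{AAGP} produces the unique strong solution of \eqref{D:skorhod}; write $X^{a_n,b_n}=\Gamma_{[a_n,b_n]}(X)$ and $X^{a,b}=\Gamma_{[a,b]}(X)$, each started by projecting $X_0=x$ onto the corresponding interval.

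The first step is the quantitative comparison. Using spatial homogeneity of $X$ in the form $\Gamma_{[a,b]}(X)=a+\Gamma_{[0,b-a]}(X-a)$, together with the Lipschitz continuity of the map $\Gamma_{[0,\ell]}$ both in the driving path (uniform norm) and in the length parameter $\ell$, I would produce a constant $C$ and a sequence $\delta_n:=C\bigl(|a_n-a|+|b_n-b|\bigr)\to 0$ such that
\[
\sup_{t\ge 0}\bigl|X^{a_n,b_n}_t-X^{a,b}_t\bigr|\le \delta_n .
\]
Since $(a_n,b_n)\to(a,b)$, for $n$ large all these processes take values in a fixed compact interval $K\supseteq\bigcup_n[a_n,b_n]\cup[a,b]$, on which $f$ is bounded and uniformly continuous; denoting by $\omega_f$ its modulus of continuity on $K$, the pathwise bound yields, for every $t\ge 0$,
\[
\bigl|\E f(X^{a_n,b_n}_t)-\E f(X^{a,b}_t)\bigr|\le \omega_f(\delta_n).
\]

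Finally I would let $t\to\infty$ and interchange the two limits. For $a<b$, the convergence in total variation to the stationary law (the Remark following Definition~\ref{Ch4DefinitionStationarymeasure} and \cite[Section V]{AAGP}) together with the boundedness of $f$ gives $\E f(X^{a_n,b_n}_t)\to p^{a_n,b_n}$ and $\E f(X^{a,b}_t)\to p^{a,b}$ as $t\to\infty$; the triangle inequality then forces $|p^{a_n,b_n}-p^{a,b}|\le \omega_f(\delta_n)\to 0$. The degenerate case $a=b$ imposed by $a\le 0\le b$ (so $a=b=0$) is immediate, since then $X^{a,a}_t\equiv a$ and $p^{a,a}=f(a)$, and it is also captured by the same estimate because $X^{a_n,b_n}_t\in[a_n,b_n]$ collapses onto $\{0\}$ as $\ell_n=b_n-a_n\to 0$.

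The main obstacle is the first step, namely establishing the uniform-in-$t$ Lipschitz (or at least vanishing modulus of) dependence of the reflected path on the two barriers; this is the only nontrivial input and I would justify it from the explicit formula for the two-sided Skorokhod map and its sup-norm Lipschitz estimates, reducing the joint barrier dependence to the shift part (exact, via homogeneity) and the length part via \cite{AAGP}. Everything after that is a soft compactness-and-boundedness argument.
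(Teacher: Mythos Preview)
Your approach is correct and is essentially a fleshed-out version of the paper's own argument. The paper's proof is extremely terse: it writes
\[
\lim_{n\to\infty}\E\bigl(f(X^{a_n,b_n}_\infty)-f(X^{a,b}_\infty)\bigr)=\lim_{n\to\infty}\lim_{T\to\infty}\frac1T\int_0^T\E\bigl(f(X^{a_n,b_n}_s)-f(X^{a,b}_s)\bigr)\,ds=0
\]
and justifies the last equality only by the phrase ``due to the continuity of $f$''. What is being used implicitly there is precisely the fact you isolate as the main obstacle: pathwise (and hence uniform-in-$s$) closeness of $X^{a_n,b_n}_s$ to $X^{a,b}_s$ when the barriers are close. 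Your route through the Lipschitz dependence of the two-sided Skorokhod map on the interval, combined with uniform continuity of $f$ on a common compact and convergence to stationarity, makes that step explicit; the paper's Ces\`aro averaging versus your direct $t\to\infty$ limit is a cosmetic difference since total-variation convergence to the stationary law is available. One small caveat: the barrier-Lipschitz estimate $\sup_{t\ge 0}\lvert\Gamma_{[0,\ell]}(X)_t-\Gamma_{[0,\ell']}(X)_t\rvert\le C\lvert\ell-\ell'\rvert$ is true (it follows from a short comparison argument, noting that the larger interval dominates and the gap never exceeds $\lvert\ell-\ell'\rvert$), but it is not stated in quite that form in \cite{AAGP}, so you should expect to supply a couple of lines rather than a citation.
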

\begin{proof}
First we assume $a \neq b$. We can assume that $a_n \neq b_n$ for every $n$ (it is true for $n$ big enough).
 Then observe, due to the continuity of $f$:
\begin{equation*}
    \lim_{n \to \infty} \E \left(f(X_{\infty}^{a_n,b_n})-f(X_{\infty}^{a,b})  \right)=  \lim_{n \to \infty}\lim_{T \to \infty}\frac{1}{T} \int_0^T \E \left(f(X_{s}^{a_n,b_n})-f(X_{s}^{a,b} ) \right) ds=0
\end{equation*}
 
For the case $a=0=b$, simply observe that the measures $X^{a_n,b_n}_{\infty}$ have support on the interval $[a_n,b_n] $ and $a_n \leq 0 \leq b_n$.
\end{proof}

We need a technical result that will allow us to control the increment of the function $V_p$
\begin{proposition}\label{Ch4P:MFGquaternion}
    Let us consider the quaternion $b_1<b_2<b_3<b_4$ then $$\inf_{x \in [b_2,b_3]}\E_x (\tau(b_1) \wedge \sigma(b_4) )> 0.$$
\end{proposition}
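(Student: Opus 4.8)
The plan is to dominate the exit time of $X$ from the interval $(b_1,b_4)$ from below, uniformly over all starting points $x\in[b_2,b_3]$, by the exit time of the \emph{centred} process from a fixed ball, and then to show that this latter time has strictly positive expectation thanks to the right-continuity of L\'evy paths.

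First I would put $\delta:=\min\{b_2-b_1,\ b_4-b_3\}>0$, the distance from $[b_2,b_3]$ to the complement of $(b_1,b_4)$. Under $\P_x$ the centred process $\tilde X_s:=X_s-x$ is a L\'evy process started at $0$ whose law does not depend on $x$, and I would introduce
\[
\gamma:=\inf\{s\geq 0\colon |\tilde X_s|\geq \delta\}.
\]
The key pathwise step is the inequality $\tau(b_1)\wedge\sigma(b_4)\geq\gamma$, valid for every $x\in[b_2,b_3]$: for $s<\gamma$ one has $|X_s-x|<\delta$, and since $x\in[b_2,b_3]$ with $\delta\leq b_2-b_1$ and $\delta\leq b_4-b_3$, this forces $b_1<X_s<b_4$, so the process has not yet left $(b_1,b_4)$. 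Consequently $\E_x(\tau(b_1)\wedge\sigma(b_4))\geq\E(\gamma)$ for every $x\in[b_2,b_3]$, where $\E(\gamma)$ is a constant independent of $x$; taking the infimum over $x$ reduces the claim to showing $\E(\gamma)>0$.

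To see $\E(\gamma)>0$, I would use that $X$ is c\`adl\`ag with $\tilde X_0=0$, so $\tilde X_s\to 0$ as $s\downarrow 0$ almost surely; hence $|\tilde X_s|<\delta$ on a random interval $[0,\rho)$ with $\rho>0$ a.s., giving $\gamma\geq\rho>0$ a.s. Writing $\E(\gamma)=\int_0^{\infty}\P(\gamma>t)\,dt$ and noting that $\P(\gamma>t)\uparrow\P(\gamma>0)=1$ as $t\downarrow 0$ by continuity from below, I can fix $t_0>0$ with $\P(\gamma>t_0)\geq 1/2$ and conclude $\E(\gamma)\geq t_0\,\P(\gamma>t_0)\geq t_0/2>0$.

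I do not expect a serious obstacle. The only points needing care are the orientation of the inequalities in the pathwise domination—one must check that staying within distance $\delta$ of \emph{any} $x\in[b_2,b_3]$ really keeps $X$ strictly inside $(b_1,b_4)$, which is precisely the role of the choice of $\delta$—and the positivity $\gamma>0$ a.s., which relies only on c\`adl\`ag regularity of the paths and not on any moment assumption. Note in particular that this argument never uses $\E(\gamma)<\infty$, since only a lower bound is required.
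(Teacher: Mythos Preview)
Your argument is correct. The pathwise domination $\tau(b_1)\wedge\sigma(b_4)\geq\gamma$ via the choice $\delta=\min\{b_2-b_1,b_4-b_3\}$ is sound, and the stationarity of increments of a L\'evy process is exactly what makes the law of $\gamma$ under $\P_x$ independent of $x$, giving the uniform lower bound. The conclusion $\E(\gamma)>0$ from $\gamma>0$ a.s.\ is also fine (in fact any strictly positive random variable has strictly positive expectation, so the $t_0$ step, while correct, could be shortened).

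The paper takes a different route: it argues by contradiction, assuming a sequence $x_n\in[b_2,b_3]$ with $\E_{x_n}(\tau(b_1)\wedge\sigma(b_4))\to 0$, extracts a convergent subsequence by compactness of $[b_2,b_3]$, translates to the centred process to obtain exit times converging to zero, and reaches a contradiction with the fact that the first exit from an open interval containing the origin is strictly positive. Both proofs ultimately rest on right-continuity of paths to guarantee positivity of the exit time from an open neighbourhood of the starting point; the difference is that you exploit translation invariance \emph{before} taking the infimum to reduce to a single fixed exit problem, whereas the paper keeps the $x$-dependence and appeals to compactness. Your approach is more direct, avoids the subsequence machinery, and even yields an explicit lower bound $\E(\gamma)$ depending only on the geometry of the four points and the law of $X$.
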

\begin{proof}
    Assume by contradiction that there is a sequence $\lbrace x_n \rbrace \subset [b_2,b_3]$ converging to $x$ such that 
    $$\lim_{n \to \infty} \E_{x_n} (\tau(b_1)\wedge \sigma(b_4)) =0 . $$
    We can assume (taking a subsequence if necessary) that the expected value decreases to zero. Therefore, we deduce $\tau(b_1-x_n) \wedge \sigma(b_4-x_n)  $ decreases to zero $\mathbf{P}$-almost surely. This implies $\tau(b_1-x) \wedge \sigma(b_4 -x) $ is equal to zero $\mathbf{P}$ almost surely which is a contradiction because it is the fist exit from an open interval containing zero.   
\end{proof}

We have enough properties to work with the map that gives the best response.
\begin{definition}
For a fixed $\epsilon>0$, define the function $F: [-L,0] \times [0,L] \rightarrow [-L,0] \times [0,L] $ as:
$ F(a,b)=(a^{\ast}_{p^{a,b},\epsilon},b^{\ast}_{p^{a,b},\epsilon}) $. Moreover we denote $F_1(a,b)$ and $F_2(a,b)$ as the projections of $F$ in the first and second coordinate respectively.

\end{definition}

\begin{lemma}\label{CH4L:ContinuityDynkinsolution}
The function $F$ is continuous. 
\end{lemma}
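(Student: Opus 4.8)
The plan is to exploit the factorization $F = G \circ H$, where $H(a,b) = p^{a,b}$ sends a threshold pair to its associated mean-field scalar and $G(p) = (a^{\ast}_{p,\epsilon}, b^{\ast}_{p,\epsilon})$ sends a scalar parameter to the (unique, by Corollary \ref{Ch4:UniquenessDynkinMFG}) Nash equilibrium of the adjoint Dynkin game. Continuity of $H$ is exactly Lemma \ref{L:convergencetotalvariation}, so the whole burden is the continuity of $G$. By composition it suffices to show: if $(a_n,b_n)\to(a,b)$ in $[-L,0]\times[0,L]$ and $p_n:=p^{a_n,b_n}\to p:=p^{a,b}$, then $(a^{\ast}_{p_n,\epsilon},b^{\ast}_{p_n,\epsilon})\to(a^{\ast}_{p,\epsilon},b^{\ast}_{p,\epsilon})$.

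First I would upgrade the scalar convergence $p_n\to p$ to uniform convergence of the value functions. Since $c_x$ is jointly continuous and $[-L,L]$ is compact, $\sup_{x\in[-L,L]}|c_x(x,p_n)-c_x(x,p)|\to 0$; feeding this into Proposition \ref{P:contintuityinP} gives $\|V_{p_n}-V_p\|_\infty\to 0$. Next, Proposition \ref{P:BOUNDSABWITHMEANFIELD} confines every pair $(a^{\ast}_{p_n,\epsilon},b^{\ast}_{p_n,\epsilon})$ to the compact box $[-L,L]^2$, so it is enough to show that every convergent subsequence has limit $(a^{\ast}_{p,\epsilon},b^{\ast}_{p,\epsilon})$; relabel such a subsequence and call its limit $(\bar a,\bar b)$. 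Using $V_{p_n}(a^{\ast}_{p_n,\epsilon})=-q_u$ and $V_{p_n}(b^{\ast}_{p_n,\epsilon})=q_d$ (which follow from the level-set characterization and the continuity of the value function) together with the uniform convergence and the Lipschitz continuity of $V_p$, I would pass to the limit to obtain $V_p(\bar a)=-q_u$ and $V_p(\bar b)=q_d$. By the characterization $a^{\ast}_{p,\epsilon}=\sup\{x\colon V_p(x)=-q_u\}$ and $b^{\ast}_{p,\epsilon}=\inf\{x\colon V_p(x)=q_d\}$ this already yields the one-sided inequalities $\bar a\leq a^{\ast}_{p,\epsilon}$ and $\bar b\geq b^{\ast}_{p,\epsilon}$.

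The crux, which I expect to be the main obstacle, is to exclude an outward jump of the thresholds, i.e. to rule out $\bar a<a^{\ast}_{p,\epsilon}$ and $\bar b>b^{\ast}_{p,\epsilon}$; this is where Proposition \ref{Ch4P:MFGquaternion} enters. Suppose $\bar b>b^{\ast}_{p,\epsilon}$ and pick $b^{\ast}_{p,\epsilon}<y_1<y_2<\bar b$; then $V_p(y_1)=V_p(y_2)=q_d$, while for all large $n$ the points $y_1,y_2$ lie in the continuation region $(a^{\ast}_{p_n,\epsilon},b^{\ast}_{p_n,\epsilon})$ since $a^{\ast}_{p_n,\epsilon}\to\bar a<0<y_1$ and $b^{\ast}_{p_n,\epsilon}\to\bar b>y_2$. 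Applying the increment bound from the proof of Lemma \ref{L:MEANFIELDDynkinLipschitz} with $x=y_1$, $h=y_2-y_1$, and restricting the integration to the first exit time $\rho$ of a small fixed interval $(-\eta,\eta)$ around the origin — chosen so that $y_1-\eta>0$ and, for large $n$, $(-\eta,\eta)\subset(a^{\ast}_{p_n,\epsilon}-y_1,\,b^{\ast}_{p_n,\epsilon}-y_2)$, whence $\rho\leq\tau(a^{\ast}_{p_n,\epsilon})_{y_1}\wedge\sigma(b^{\ast}_{p_n,\epsilon})_{y_2}$ — keeps the argument of $c_{xx}$ inside $(y_1-\eta,y_2+\eta)$, a set bounded away from $0$. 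By Assumption \ref{A:Assumptions2MEANFIELDGAME}(i) there $c_{xx}\geq m>0$, so, since the integrand is nonnegative,
$$V_{p_n}(y_2)-V_{p_n}(y_1)\geq (y_2-y_1)\,m\,\E\Big(\tfrac{1-e^{-\epsilon\rho}}{\epsilon}\Big)=:\kappa>0,$$
where $\kappa$ is independent of $n$ because $\rho$ does not depend on $n$ and $\E(\rho)>0$ (the process does not exit an open interval containing $0$ instantaneously, the uniform positivity being furnished by Proposition \ref{Ch4P:MFGquaternion}). Letting $n\to\infty$ and using uniform convergence forces $V_p(y_2)-V_p(y_1)\geq\kappa>0$, contradicting $V_p(y_1)=V_p(y_2)=q_d$; hence $\bar b=b^{\ast}_{p,\epsilon}$. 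The argument for $\bar a=a^{\ast}_{p,\epsilon}$ is symmetric, choosing $\bar a<y_1<y_2<a^{\ast}_{p,\epsilon}<0$ with the window kept on the negative axis. Since every subsequential limit equals $(a^{\ast}_{p,\epsilon},b^{\ast}_{p,\epsilon})$ and the sequence is bounded, the full sequence converges, proving continuity of $F$.
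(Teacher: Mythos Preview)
Your proposal is correct and follows essentially the same route as the paper: uniform convergence of the value functions via Proposition~\ref{P:contintuityinP} and Lemma~\ref{L:convergencetotalvariation}, the one-sided inequality from the level-set characterization of the thresholds, and then ruling out an outward jump by the strict-increase estimate of Lemma~\ref{L:MEANFIELDDynkinLipschitz} combined with Proposition~\ref{Ch4P:MFGquaternion}. The only cosmetic differences are your explicit factorization $F=G\circ H$ with a subsequence argument, and your use of the pre-limit finite-increment inequality with a fixed exit time $\rho$ of $(-\eta,\eta)$ (which in fact makes the uniformity in $n$ slightly more transparent than the paper's choice of stopping time).
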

\begin{proof}
We will only prove $F_1$ is continuous (the proof of the continuity of $F_2$ is analogue).
For $(a,b) \in [-L,0] \times [0,L] $, take $x \in (a^{\ast}_{p^{a,b},\epsilon},b^{\ast}_{p^{a,b},\epsilon})$. Observe $ V_{p^{a,b}}(x)< q_d$. Take a sequence $\lbrace (a_n , b_n) \rbrace_{n \geq 0} \subset [-L,0] \times [0,L] $ converging to $(a,b)$. Observe, due to Lemma \ref{L:convergencetotalvariation} and Proposition \ref{P:contintuityinP} we have
$$\limsup_{n  \to \infty} V_{p^{a_n,b_n}}(x)< q_d. $$
Therefore $\limsup_{n \to \infty} F_1(a_n,b_n) \geq \liminf_{n \to \infty} F_1(a_n,b_n) \geq  b^{\ast}_{p^{a,b},\epsilon}$. Let us assume by contradiction that  $\limsup_{n  \to \infty} F_1(a_n,b_n) =\hat{b} >  b^{\ast}_{p^{a,b},\epsilon}$.  Set $b_1<b_2<b_3 $ such that $(b_1,b_3) \subset (b^*_{p_{a,b},\epsilon},\hat{b})$. Observe, using Lemma \ref{L:MEANFIELDDynkinLipschitz} that for $N$ big enough (using the fact $c_{xx}(x,y)>0$ whenever $x \neq 0$):
\begin{multline*}
\liminf_{h \to 0^+}{V_{p^{a_n,b_n}}(x+h)-V_{p^{a_n,b_n}}(x)\over h} \\
\geq  \E \left( \int_0^{\tau(a^*_{p_{a^n,b^n},\epsilon})_{x}\wedge \sigma(b^*_{p_{a^n,b^n},\epsilon})_{x+h}}\left( c_{xx}(X_s,p_{a^n,b^n})  \right)e^{-\epsilon s}\,ds\right) \\
 \geq  \liminf_{h \to 0^+} \E\int_0^{\tau(b^*_{p_{a,b},\epsilon})_{x}\wedge \sigma(b_3)_{x}}\left(\inf_{(h,y)\in   (-b^*_{p_{a,b},\epsilon},b^*_{p_{a,b},\epsilon})^c \times \mathbb{R}} c_{xx}(h,y) \right)e^{-\epsilon s}\,ds.
\end{multline*}
for every $x \in (b_1.b_2)$. Due to the fact 
$$ \inf_{x \in (b_1,b_2)}\E(\tau(b^*_{p_{a,b},\epsilon})_{x}\wedge \sigma(b_3)_{x})>0, $$ we deduce that there is a $\delta>0$ such that for every $x \in (b_1,b_2)$ and $n$ big enough

\begin{multline}\label{eq:boundforderivatecontinuity}
\liminf_{h \to 0^+}{V_{p^{a_n,b_n}}(x+h)-V_{p^{a_n,b_n}}(x)\over h} 
\\ \geq  \E \left( \int_0^{\tau(b^*_{p_{a,b},\epsilon})_{x}\wedge \sigma(b_3)_{x}}\left(\inf_{(h,y)\in  (-b^*_{p_{a,b},\epsilon},b^*_{p_{a,b},\epsilon})^c \times \mathbb{R} } c_{xx}(h,y)  \right)e^{-\epsilon s}\,ds\right)=: \delta .
\end{multline}
Observe, due to Proposition \ref{Ch4P:MFGquaternion} the constant $\delta$ is greater than zero.
Thus, using Lemma \ref{L:convergencetotalvariation} and Proposition \ref{P:contintuityinP}, we get:
\begin{equation*}
0=V_{p^{a,b}}(b_2)-V_{p^{a,b}}(b_1)= \lim_{n \to \infty}V_{p^{a_n,b_n}}(b_2)-V_{p^{a_n,b_n}}(b_1) \geq \delta(b_2-b_1), 
\end{equation*}
arriving to a contradiction. Therefore $F_1$ is continuous. With an analogue argument we can prove $F_2$ is continuous, concluding the proof of the lemma.
\end{proof}

\begin{proof}[Proof of Theorem \ref{T:MEANFIELDDISCOUNTEDPROBLEMSOLUTION}]
Due to Proposition \ref{P:BOUNDSABWITHMEANFIELD} and Lemma \ref{CH4L:ContinuityDynkinsolution}, we are in the hypothesis of Brouwer fixed-point Theorem. Thus, using the notation of Proposition \ref{P:BOUNDSABWITHMEANFIELD} we have a fixed point $(a^{\ast},b^{\ast})$ of the function $F$. Due to Theorem \cite[Theorem 1]{MO} we deduce that $(a^{\ast},b^{\ast})$ is an $\epsilon$-discounted equilibrium.
\end{proof} 

\section{Mean field equilibrium for the ergodic problem}\label{C43:MFGequilibriumforergodic}
To prove Theorem \ref{T:ErgodicProblemsolutionMFG}, to show that the optimal strategies converge to an optimal strategy, first we need to show that 
$$\lim_{\epsilon \to 0} \epsilon J_{\epsilon} (x,U^{a,b},D^{a,b},p) \to J(x,U^{a,b},D^{a,b},p). $$
For that endeavor we need to use theory of regenerative processes. For the reader's convenience, we write
some known results of \emph{renewal theory}.

\begin{definition}\label{ChTools:renewalprocess}\cite[Chapter V, Section 1] {ASMUSSENQUEUES}, 
Let $0 \leq S_0 \leq S_1< S_2 < \dots $ be \emph{the times of occurrence of some phenomenon} (in this article a stopping time) and $Y_n=S_n-S_{n-1}$, $Y_0=S_0$. Then $\lbrace S_n\rbrace_{n \in \mathbb{N}}$ (with the zero included) is called renewal process if $Y_0,Y_1, \dots $ are independent and $Y_1, Y_2,\dots $ (but not necessarily $Y_0$) have the same distribution. 
\end{definition}

\begin{definition}\label{ChTools:accumulativeprocess}\cite[Chapter VI, Section 3]{ASMUSSENQUEUES}
  
A real–valued process $\lbrace Z_t \rbrace$ is called cumulative if $Z_0=0$ and there exists
a renewal process $\lbrace S_n \rbrace $ such that for any $n$, $ \lbrace Z_{S_n +t }-Z_{S_n} \rbrace$ is independent of $S_0, \ S_1, \dots ,S_n $ and $\lbrace Z_t \rbrace_{t < S_n},$ and for every $t\geq 0, \ n,m \in \mathbb{N}$:   
$Z_{S_n +t }-Z_{S_n}=Z_{S_m +t }-Z_{S_m}  \text{ in law.} $ 
\end{definition}

\begin{theorem}\label{ChToolsT:asmussen}\cite[Chapter VI, Theorem 3.1]{ASMUSSENQUEUES}
Suppose $\lbrace S_n \rbrace$ is a renewal process and $Z_t$ is an accumulative process. Moreover assume $S_0=0, \ \E (S_2-S_1)< \infty, \ \E \vert Z_{S_1} \vert < \infty$. Then 
$$\lim_{t \to \infty} \frac{Z_t}{t} =  \frac{\E(Z_{S_1})}{\E(S_2-S_1)} \ \text{a.s} \quad \text{if and only if } \E \left(\max_{0 \leq t \leq S_1}\vert Z_t \vert \right) < \infty. $$
\end{theorem}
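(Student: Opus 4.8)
The plan is to prove this renewal--reward (regenerative) limit theorem by splitting $Z_t$ into a sum over \emph{completed} cycles plus the contribution of the single \emph{incomplete} cycle straddling $t$, and then to show that the condition $\E\big(\max_{0\le t\le S_1}|Z_t|\big)<\infty$ is exactly what controls the incomplete-cycle term. Write $\mu=\E(S_2-S_1)\in(0,\infty)$, and for $t\ge 0$ let $N_t=\max\{n\ge 0\colon S_n\le t\}$ be the renewal counting process. Since $S_0=0$ (so there is no delay) and $Z_0=0$, the cumulative structure makes the cycle increments $X_n:=Z_{S_n}-Z_{S_{n-1}}$, $n\ge 1$, i.i.d.\ with $X_1=Z_{S_1}$ and $\E|X_1|<\infty$ by hypothesis, and it makes the cycle maxima $R_n:=\max_{S_{n-1}\le s\le S_n}|Z_s-Z_{S_{n-1}}|$ i.i.d.\ with $R_1=\max_{0\le t\le S_1}|Z_t|$, since each is a functional of one i.i.d.\ block. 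The basic decomposition is
\[
\frac{Z_t}{t}=\frac1t\sum_{n=1}^{N_t}X_n+\frac{Z_t-Z_{S_{N_t}}}{t},\qquad |Z_t-Z_{S_{N_t}}|\le R_{N_t+1}.
\]

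For the ``if'' direction I would first invoke the strong law for the renewal process, $N_t/t\to 1/\mu$ a.s.\ (which follows from $S_n/n\to\mu$ and $S_{N_t}\le t<S_{N_t+1}$), together with $N_t\to\infty$ a.s. The SLLN applied to the i.i.d.\ sequence $\{X_n\}$ gives $\frac1{N_t}\sum_{n=1}^{N_t}X_n\to\E(Z_{S_1})$, so the first term tends to $\E(Z_{S_1})/\mu$. It then remains to show the incomplete-cycle term is negligible. Here I would use the standard fact that for i.i.d.\ nonnegative $R_n$ one has $\E R_1<\infty\iff R_n/n\to 0$ a.s.\ (a Borel--Cantelli argument), which upgrades to $\max_{1\le k\le n}R_k/n\to 0$ a.s. Consequently $R_{N_t+1}/t\le\big(\max_{1\le k\le N_t+1}R_k/(N_t+1)\big)\cdot\big((N_t+1)/t\big)\to 0\cdot(1/\mu)=0$, and the decomposition yields $Z_t/t\to\E(Z_{S_1})/\mu$.

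For the ``only if'' direction I would argue by contraposition: assume $\E R_1=\infty$. Since the $R_n$ are i.i.d.\ and independent, the second Borel--Cantelli lemma gives $\P(R_n>cn\text{ i.o.})=1$ for every $c>0$, so $\limsup_n R_n/n=\infty$ a.s. Choosing in each cycle a time $\tau_n\in[S_{n-1},S_n]$ attaining $R_n$ gives $|Z_{\tau_n}|\ge R_n-|Z_{S_{n-1}}|$. Because $\E|Z_{S_1}|<\infty$, the SLLN still yields $Z_{S_{n-1}}\sim\E(Z_{S_1})\,n$ and $S_n\sim\mu n$ a.s.; hence along the (random, infinite) set of indices where $R_n>cn$ one obtains $|Z_{\tau_n}|/\tau_n\ge|Z_{\tau_n}|/S_n\ge(c-|\E Z_{S_1}|)/\mu+o(1)$. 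Letting $c\to\infty$ shows $\limsup_{t}|Z_t|/t=\infty$ a.s., which is incompatible with $Z_t/t$ converging to the finite value $\E(Z_{S_1})/\mu$. This establishes the converse.

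The main obstacle, and the step carrying the whole equivalence, is the control of the last incomplete cycle: both implications reduce to the dichotomy ``$\E R_1<\infty$ versus $\E R_1=\infty$'' through the Borel--Cantelli characterization of the a.s.\ growth of $\max_{k\le n}R_k$. Care is needed to verify that the regenerative structure with $S_0=0$ genuinely makes $\{X_n\}$ and $\{R_n\}$ i.i.d.\ (so that both the SLLN and Borel--Cantelli apply) and that $R_{N_t+1}$ really dominates $|Z_t-Z_{S_{N_t}}|$ uniformly in $t$ within a cycle; the remainder is a routine combination of strong laws.
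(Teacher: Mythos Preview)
The paper does not provide its own proof of this theorem: it is stated as a quoted result from \cite[Chapter VI, Theorem 3.1]{ASMUSSENQUEUES} and is used only as a tool in the proof of Proposition~\ref{P:convergenceepsilon}. Your proposal reproduces precisely the standard argument (decomposition into completed cycles plus the straddling cycle, SLLN for the i.i.d.\ cycle increments, and the Borel--Cantelli dichotomy for $R_n/n$), which is essentially the proof given in Asmussen's book itself, so there is nothing to compare against in the present paper.
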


\subsection{Ergodic results}
The main tool that helps us in this section is Theorem \ref{ChToolsT:asmussen} and to use that theorem, we need to define an adequate renewal process and an adequate accumulative process.
 We define $\lbrace \tau_n \rbrace_n $ as:
\begin{align*}
&\tau_0 = \inf \lbrace t \geq 0, X^{0,b}_t=0, \sup_{0 \leq s \leq t} X^{0,b}_s=b \rbrace , \\
&\tau_{n+1}  = \inf \lbrace t \geq \tau_n,\ X^{0,b}_t=0, \sup_{\tau_n  \leq s \leq t} X^{0,b}_s=b \rbrace  .
\end{align*}
Notice that $\lbrace \tau_n \rbrace_n$ is a renewal process and every $\tau_n$ is a stopping time. Moreover: 
\begin{equation}\label{eq:renewalbounded}
 \E_x (\tau_0) \leq \E_x(\tau_{n+1}-\tau_n)< \infty  \text{ for all } n \geq 1, \ x \in \mathbb{R}.
 \end{equation}

We proceed to study the abelian limit of the costs.

\begin{proposition}\label{P:convergenceepsilon}
If $X$ has bounded (unbounded) variation and $a \leq b$ ($a <b$), $x \in \mathbb{R}$, then:

$$\lim_{\epsilon \to 0}\epsilon J_{\epsilon}(x,U^{a,b},D^{a,b},X_{\infty}^{a,b})=J(x,U^{a,b},D^{a,b},X^{a,b}_{\infty}) $$
\end{proposition}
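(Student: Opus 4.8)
The plan is to show that both sides of the claimed identity equal a single constant $\ell$, expressed as a ratio of expected per-cycle quantities along the renewal process $\{\tau_n\}$. Throughout write $p=p^{a,b}$ and introduce the nondecreasing accumulated–cost process
\[
Z_t=\int_0^t c(X^{a,b}_s,p)\,ds+q_u U^{a,b}_t+q_d D^{a,b}_t,\qquad Z_0=0,
\]
so that $J_{\epsilon}(x,U^{a,b},D^{a,b},X^{a,b}_{\infty})=\E_x\big(\int_0^{\infty}e^{-\epsilon s}\,dZ_s\big)+q_u u_0^{a,b}+q_d d_0^{a,b}$ and $J(x,U^{a,b},D^{a,b},X^{a,b}_{\infty})=\limsup_{T}\tfrac1T\E_x(Z_T)$. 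The first step is to record that $\{Z_t\}$ is a cumulative process in the sense of Definition \ref{ChTools:accumulativeprocess} relative to $\{\tau_n\}$: since each $\tau_n$ is a stopping time at which the controlled process returns to the lower reflecting level after completing an up-crossing to $b$, the strong Markov property makes the post-$\tau_n$ increments $\{Z_{\tau_n+t}-Z_{\tau_n}\}_t$ independent of $\F_{\tau_n}$ and identically distributed.

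Next I would verify the integrability hypotheses of Theorem \ref{ChToolsT:asmussen}. Because the trajectory stays in the compact interval $[a,b]$, the running cost $c(\cdot,p)$ is bounded there, so $\int_{\tau_0}^{\tau_1}c(X^{a,b}_s,p)\,ds\le \|c(\cdot,p)\|_{\infty,[a,b]}(\tau_1-\tau_0)$ has finite expectation by \eqref{eq:renewalbounded}, while the expected increment of the reflection controls over one cycle is finite by \cite[Theorem 6.3]{AAGP}. Since $Z$ is nondecreasing, $\max_{0\le t\le \tau_1}|Z_t|=Z_{\tau_1}$, whence $\E(\max_{0\le t\le\tau_1}|Z_t|)=\E Z_{\tau_1}<\infty$ and $\E(\tau_2-\tau_1)<\infty$. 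Theorem \ref{ChToolsT:asmussen} then gives $Z_T/T\to \ell:=\E(Z_{\tau_1}-Z_{\tau_0})/\E(\tau_1-\tau_0)$ almost surely, and Wald's identity combined with the elementary renewal theorem upgrades this to $\tfrac1T\E_x(Z_T)\to\ell$; in particular the $\limsup$ in the definition of $J$ is a genuine limit and $J(x,U^{a,b},D^{a,b},X^{a,b}_{\infty})=\ell$.

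For the discounted side I would decompose the Laplace–Stieltjes integral along the regeneration times, obtaining by the i.i.d.\ cycle structure the resummation
\[
\E_x\Big(\int_0^{\infty}e^{-\epsilon s}\,dZ_s\Big)=\E_x\Big(\int_0^{\tau_0}e^{-\epsilon s}\,dZ_s\Big)+\E_x\big(e^{-\epsilon\tau_0}\big)\,\frac{r^{\epsilon}}{1-\psi(\epsilon)},
\]
where $\psi(\epsilon)=\E(e^{-\epsilon(\tau_1-\tau_0)})$ and $r^{\epsilon}=\E\big(\int_0^{\tau_1-\tau_0}e^{-\epsilon u}\,d(Z_{\tau_0+u}-Z_{\tau_0})\big)$. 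Multiplying by $\epsilon$ and letting $\epsilon\to0$: the delay term is bounded by $\epsilon\,\E_x Z_{\tau_0}\to0$ and the initial reflection cost by $\epsilon(q_u u_0^{a,b}+q_d d_0^{a,b})\to0$; $\E_x(e^{-\epsilon\tau_0})\to1$ and $r^{\epsilon}\to\E(Z_{\tau_1}-Z_{\tau_0})$ by dominated convergence (dominated by $Z_{\tau_1}-Z_{\tau_0}\in L^1$); and $\epsilon/(1-\psi(\epsilon))\to 1/\E(\tau_1-\tau_0)$, since monotone convergence gives $1-\psi(\epsilon)=\epsilon\,\E(\tau_1-\tau_0)+o(\epsilon)$ from the finiteness of the mean cycle length. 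Collecting these limits yields $\lim_{\epsilon\to0}\epsilon J_{\epsilon}=\ell=J$, as required; the degenerate bounded-variation case $a=b$, where $X^{a,a}\equiv a$ and the controls reduce to the subordinator components $S^{\pm}$, is treated by the same bookkeeping with the cycle reward replaced by a direct computation.

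I expect the main obstacle to be the two passages that leave the almost-sure/pathwise level: upgrading the renewal-reward limit to convergence of the expectations $\tfrac1T\E_x(Z_T)$, and justifying the geometric resummation together with the dominated convergence for $r^{\epsilon}$ on the discounted side. Both hinge on the cycle-wise integrability $\E Z_{\tau_1}<\infty$, which in turn rests on the linear-growth control of Assumption \ref{A:Assumptions2MEANFIELDGAME}(ii) and the finite-mean reflection estimates of \cite{AAGP}.
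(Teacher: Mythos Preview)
Your proposal is correct and follows essentially the same route as the paper: both arguments use the regenerative structure along $\{\tau_n\}$, check the hypotheses of Theorem \ref{ChToolsT:asmussen}, and compute the discounted side via a geometric resummation over cycles together with the Tauberian step $\epsilon/(1-\psi(\epsilon))\to 1/\E(\tau_1-\tau_0)$. The only cosmetic differences are that the paper splits the cost into two separate cumulative processes (running cost and reflection cost) rather than one, and verifies $\E D^{0,b}_{\tau_1}<\infty$ by writing out the explicit inequality from \cite[Theorem~6.3, eq.~(70)]{AAGP}, whereas you cite that result directly; your explicit mention of the upgrade from almost-sure to $L^1$ convergence via Wald and the elementary renewal theorem is in fact a point the paper leaves implicit.
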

\begin{proof}
First of all, notice that the next arguments hold if $a=b$ (when the process has bounded variation). Moreover, it is enough to prove (even if $x\notin [a,b]$):
\begin{align}\label{Eq:EPSILON1BIGCONVERGENCEwithc}
&\lim_{\epsilon \to 0}\epsilon \E_x \left( \int_0^{1/{\epsilon}}c(X^{a,b}_s,p^{a,b}) ds  - \int_0^{\infty}\left(c(X^{a,b}_s,p^{a,b}) \right)e^{-\epsilon s}ds\right)=0,
\\
 \label{Eq:2EPSILONBIGCONVERGENCEwithLinear}
&\lim_{\epsilon \to 0}\epsilon \E_x \left(\int_{0}^{1/\epsilon}q_u  d(D^{a,b}_s)  -\int_{0}^{\infty}q_ue^{-\epsilon s} d(D^{a,b}_s) \right)=0, 
\end{align}
Observe that we can assume $x=a$  (which implies $\tau_0=0$ and there is no first jump) due to the strong markov property. By noticing that if the process starts at $a$, $X^{a,b}_t$ and $D^{a,b}_t$ are equal in law to the processes starting at zero $X^{0,b-a}_t+a$ and $D^{0,b-a}_t$ for every $t$ respectively, we can also assume $a=0$.
To prove \eqref{Eq:EPSILON1BIGCONVERGENCEwithc} and \eqref{Eq:2EPSILONBIGCONVERGENCEwithLinear}, denote $S_n:= \tau_n,$  for all $n \in \mathbb{N}$.  Furthermore notice that the processes
\begin{equation*}
(Z_1)_t := \int_0^{t}\left(c(X^{0,b}_s+a,p^{0,b}) \right)ds , \quad (Z_2)_t : =\int_{0}^{t}q_u d(D^{0,b}_s)  
\end{equation*}
are cumulative. We proceed to prove that both processes are in the hypothesis of Theorem \ref{ChToolsT:asmussen}. Firstly, using the continuity of $c$ and \eqref{eq:renewalbounded}, we deduce $Z_1$ is in the hypothesis of the mentioned Theorem.\\
Secondly notice;
\begin{equation*}
\E \left(\max_{0 \leq t \leq \tau_1} \vert (Z_2)_t \vert \right)\leq q_u\E D^{0,b}_{\tau_1} .
\end{equation*}
Therefore, to prove that $Z_2$ is in the hypothesis of the Theorem, it is enough to prove 
\begin{equation*}
\E D^{0,b}_{\tau_1}< \infty.
\end{equation*}
Observe, using the inequality obtained from \cite[Theorem 6.3, equation (70)]{AAGP}:
\begin{multline*}
    b D^{0,b}_t  \leq b^2+x^2  +2\int_{0^+}^t X^{0,b}_{s^-}dX_s+\frac{\sigma^2}{2}t \\ + \sum_{s \leq t} \left(\mathbf{1}_{\Delta X_s \geq b} (2 \Delta X_s b+b) 
    + \mathbf{1}_{\Delta X_s \leq -b}(b^2 -2b \Delta X_s)+ \mathbf{1}_{\vert \Delta X_s \vert <b } \Delta X_s^2 \right),
\end{multline*}
the fact $X$ has finite mean and  $\E (\tau_1)<\infty$ we deduce \eqref{eq:FINITUDEtau} holds and $Z_2$ is in the hypothesis of the Theorem. \\
 To finish the proof of the Proposition, we study the second integral of each equation \eqref{Eq:EPSILON1BIGCONVERGENCEwithc}  and \eqref{Eq:2EPSILONBIGCONVERGENCEwithLinear}. More precisely, observe:
 
\begin{align*}
&\lim_{\epsilon \to 0}\epsilon \E_a \left( \int_0^{\infty} c(X_s^{a,b},p^{a,b})e^{-\epsilon s}ds \right)
\\ &=\lim_{\epsilon \to 0} \epsilon \sum_{n=0}^{\infty}  \left(\E_a e^{-\epsilon \tau_1} \right)^n   \E_a \left( \int_0^{\tau_1}c(X_s^{a,b},p^{a,b}) e^{-\epsilon s}ds\right) \\ &=\lim_{\epsilon \to 0} \epsilon \sum_{n=0}^{\infty}  \left(\E_a e^{-\epsilon \tau_1} \right)^n   \E_a \left( \int_0^{\tau_1}c(X_s^{a,b},p^{a,b}) ds\right)
= \lim_{\epsilon \to 0} \epsilon \frac{\displaystyle \E_a \left( (Z_1)_{\tau_1} \right)}{\displaystyle 1-\E_a(e^{-\epsilon \tau_1})}\\ &= \lim_{\epsilon \to 0} \frac{\displaystyle \E_a \left( (Z_1)_{\tau_1} \right)}{\displaystyle \E_a \left( \left(1-e^{-\epsilon \tau_1}\right) \frac{\tau_1}{\epsilon \tau_1} \right)}= \frac{\displaystyle  \E_a \left( (Z_{1})_{\tau_1} \right) }{\E_a \tau_1},
\end{align*}
where in the last equality the dominated convergence Theorem has been used. Therefore, using Theorem \ref{ChToolsT:asmussen}, we deduce the equation \eqref{Eq:EPSILON1BIGCONVERGENCEwithc} holds. A similar reasoning can be used to deduce that \eqref{Eq:2EPSILONBIGCONVERGENCEwithLinear}  holds, concluding the proof of the proposition.\\
\end{proof} 

\begin{lemma}\label{L:continuitypowerful}
If $X$ has bounded (unbounded) variation and $a \leq b$ ($a <b$), $\lbrace (a_n, b_n,\epsilon_n) \rbrace_n$ is a sequence that converges to $(a,b,0)$ when $n \to \infty$ and $a_n \leq b_n, \ (a_n<b_n)$ for all $n$,  then:

$$\lim_{n \to \infty}\epsilon_n J_{\epsilon_n}(x,U^{a_n,b_n},D^{a_n,b_n},X^{a_n,b_n}_{\infty})=J(x,U^{a,b},D^{a,b},X^{a,b}_{\infty}) $$
for all $x \notin \lbrace a,b \rbrace $.
\end{lemma}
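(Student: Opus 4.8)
The plan is to upgrade the single-barrier abelian limit of Proposition~\ref{P:convergenceepsilon} to a \emph{joint} limit in $(a_n,b_n,\epsilon_n)$ by re-expressing each term as a renewal ratio and proving that both numerator and denominator are continuous in the barriers. First I would carry out the same reduction as in Proposition~\ref{P:convergenceepsilon}: since $x\notin\{a,b\}$, for all large $n$ the point $x$ lies strictly inside or strictly outside $(a_n,b_n)$, so the transient before the first regeneration instant $\tau_0^{a_n,b_n}$ is uniformly controlled; by the strong Markov property this transient, together with the initial jump cost $\epsilon_n\big(u_0^{a_n,b_n}q_u+d_0^{a_n,b_n}q_d\big)$, contributes only a term of order $\epsilon_n\cdot O(1)\to 0$. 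Hence one may assume $x=a_n$ (so $\tau_0^{a_n,b_n}=0$) and, after the spatial shift, that the lower barrier is $0$. Writing $\tau_1^n:=\tau_1^{a_n,b_n}$ for the renewal-cycle length and
$$
C^n:=\int_0^{\tau_1^n} c\big(X^{a_n,b_n}_s,p^{a_n,b_n}\big)\,ds+\int_0^{\tau_1^n}\big(q_u\,dU^{a_n,b_n}_s+q_d\,dD^{a_n,b_n}_s\big)
$$
for the total cost accrued over one cycle, the geometric-series computation of Proposition~\ref{P:convergenceepsilon} yields
$$
\epsilon_n J_{\epsilon_n}\big(x,U^{a_n,b_n},D^{a_n,b_n},X^{a_n,b_n}_\infty\big)=\frac{\E_{a_n}\big(C^n\big)}{\E_{a_n}\!\big((1-e^{-\epsilon_n\tau_1^n})/\epsilon_n\big)}+o(1).
$$

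For the denominator I would use that $(1-e^{-\epsilon_n t})/\epsilon_n\uparrow t$ as $\epsilon_n\downarrow0$ and $(1-e^{-\epsilon_n t})/\epsilon_n\le t$, so the integrand is dominated by $\tau_1^n$; combined with the pathwise convergence $\tau_1^n\to\tau_1^{a,b}$ and a uniform-integrability bound for $\{\tau_1^n\}_n$ (refining \eqref{eq:renewalbounded} to a bound uniform in $n$), dominated convergence gives $\E_{a_n}\!\big((1-e^{-\epsilon_n\tau_1^n})/\epsilon_n\big)\to\E_a(\tau_1^{a,b})$. For the numerator I would couple all the reflected processes to a single driving Lévy process $X$ and invoke continuity of the two-sided Skorokhod reflection map in the barrier levels to obtain $X^{a_n,b_n}_s\to X^{a,b}_s$ and $U^{a_n,b_n},D^{a_n,b_n}\to U^{a,b},D^{a,b}$ pathwise over the cycle; together with the continuity of $c$, the convergence $p^{a_n,b_n}\to p^{a,b}$ from Lemma~\ref{L:convergencetotalvariation}, and the cycle control-cost bound obtained from the inequality of \cite[Theorem 6.3]{AAGP} (exactly as in Proposition~\ref{P:convergenceepsilon}), this gives $\E_{a_n}\big(C^n\big)\to\E_a\big(C^{a,b}\big)$. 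Since $J\big(x,U^{a,b},D^{a,b},X^{a,b}_\infty\big)=\E_a\big(C^{a,b}\big)/\E_a(\tau_1^{a,b})$ by Proposition~\ref{P:convergenceepsilon} and Theorem~\ref{ChToolsT:asmussen}, the two ratios combine to give the claim.

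I expect the main obstacle to be the joint convergence of the renewal structure: namely establishing (i) the continuity of the reflected paths, the controls, and the regeneration times $\tau_1^{a_n,b_n}$ with respect to the barriers $(a_n,b_n)$---a continuity-of-Skorokhod-reflection statement along the coupling---and (ii) the uniform integrability, in $n$, of both $\tau_1^n$ and the cycle control cost, which is what legitimizes passing the pathwise limits through the expectations $\E_{a_n}(\cdot)$. The hypothesis $a\le b$ in the bounded-variation case and $a<b$ in the unbounded-variation case is exactly what guarantees the limiting cycle is non-degenerate, so that $\tau_1^{a,b}$ is a.s.\ finite with $0<\E_a(\tau_1^{a,b})<\infty$ and the limiting ratio is well defined.
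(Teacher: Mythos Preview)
Your outline is coherent but takes a different and heavier route than the paper. The paper does \emph{not} redo the renewal-ratio computation with moving barriers; instead it writes the target difference as
\[
\Big(\epsilon_n J_{\epsilon_n}(x,U^{a,b},D^{a,b},X^{a,b}_\infty)-J(x,U^{a,b},D^{a,b},X^{a,b}_\infty)\Big)
+\epsilon_n\Big(J_{\epsilon_n}(x;a_n,b_n)-J_{\epsilon_n}(x;a,b)\Big),
\]
handles the first bracket by Proposition~\ref{P:convergenceepsilon} at the \emph{fixed} barriers $(a,b)$, and shows the second bracket vanishes by treating the running-cost piece (continuity of $c$ together with Lemma~\ref{L:convergencetotalvariation}) and the control-cost piece (\cite[Proposition~10]{MO} plus integration by parts) separately. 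The cases $x<a$ and $x>b$ are reduced to the interior case by a translation. This decomposition outsources precisely the barrier-continuity of the discounted control costs to an existing result, so the two obstacles you correctly flag---pathwise continuity of $(X^{a_n,b_n},U^{a_n,b_n},D^{a_n,b_n},\tau_1^{a_n,b_n})$ in the barriers via a Skorokhod-map coupling, and uniform integrability in $n$ of $\tau_1^n$ and the cycle cost---never arise. Your route is more self-contained, but those two claims are genuine lemmas in their own right (closely related to what \cite[Proposition~10]{MO} encodes), and carrying them out in full would make the argument substantially longer than the paper's.
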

\begin{proof}
First, we assume $x \in ( a,b) $. Due to Proposition \ref{P:convergenceepsilon}, it is enough to prove:
\begin{multline}\label{Eq:1BIGCONVERGENCEwithc}
\lim_{n \to \infty}\epsilon_n \E_x \left( \int_0^{\infty}\left(c(X^{a,b}_s,p^{a,b})e^{-\epsilon_n s} \right)ds \right. \\ \left. -  \int_0^{\infty}\left(c(X^{a_n,b_n}_s,p^{a_n,b_n}) \right)e^{-\epsilon_n s}ds\right)=0 
\end{multline}
 and \begin{equation}\label{Eq:2BIGCONVERGENCEwithLinear}
\lim_{n \to \infty}\epsilon_n \E_x \left(\int_{0}^{\infty}q_u e^{-\epsilon_n s} d(U^{a,b}_s)  -\int_{0}^{\infty}q_u e^{-\epsilon_n s} d(U^{a_n,b_n}_s) \right)=0. 
\end{equation}
Equality \eqref{Eq:1BIGCONVERGENCEwithc} holds due to the fact $c$ is continuous and Lemma \ref{L:convergencetotalvariation}. \\
Equation \eqref{Eq:2BIGCONVERGENCEwithLinear} is deduced from \cite[Proposition 10]{MO} and integration by parts. 
For the case $x<a$, observe $x< a_n$ for $n$ big enough. Moreover the initial jump can be ommited in the limit because $\epsilon_n$ goes to zero. Finally, by translating the process we observe that it is enough to prove

\begin{multline*}
\lim_{n \to \infty}\epsilon_n \E \left( \int_0^{\infty}c(a+X^{0,b-a}_s,p^{a,b})e^{-\epsilon_n s} ds\right. \\ \left. - \int_0^{\infty}c(a_n+X^{0,b_n-a_n}_s,p^{a_n,b_n}) e^{-\epsilon_n s}ds\right)=0
\end{multline*}
and
 \begin{equation*}
\lim_{n \to \infty}\epsilon_n \E \left(\int_{0}^{\infty} q_u e^{-\epsilon_n s} d(U^{0,b-a}_s)  -\int_{0}^{\infty}q_u e^{-\epsilon_n s} d(U^{0,b_n-a_n}_s) \right)=0, 
\end{equation*}

Following the same line of reasoning as the case $x\in (a,b)$, it can be proven that the three limits hold. Finally the case $x>b$ is obviously analogue to the case $x<a$, thus the proof of the lemma is concluded.

\end{proof}
\subsection{Proof of Theorem \ref{T:ErgodicProblemsolutionMFG}}
We have enough results to prove Theorem \ref{T:ErgodicProblemsolutionMFG}.
\begin{proof}[Proof of Theorem \ref{T:ErgodicProblemsolutionMFG}] Using the notation of Proposition \ref{P:BOUNDSABWITHMEANFIELD},
take a sequence \newline $\lbrace (a^{\ast}_{ \epsilon_n},b^{\ast}_{\epsilon_n})\rbrace_{n \in \mathbb{N}} \subset [-L,L]^2 $ such that  $(a^{\ast}_{\epsilon_n},b^{\ast}_{\epsilon_n})$ is an $\epsilon_n$ discounted equilibrium for every $n$ and a couple $(a^{\ast},b^{\ast}) \in [-L,L]^2$ satisfying 
$$ (\epsilon_n, (a^{\ast}_{ \epsilon_n},b^{\ast}_{\epsilon_n})) \to (0,(a^{\ast},b^{\ast})), \ \text{when} \ n \to \infty.$$ This sequence exists due to Proposition \ref{P:BOUNDSABWITHMEANFIELD}. Now we assume $x \notin \lbrace a^{\ast},b^{\ast}\rbrace$. To prove $\rm{(i)}$ observe it is enough to prove
\begin{align}
\label{eq:ergodic1}&G(x,X_{\infty}^{a^{\ast},b^{\ast}})-\lim_{n \to \infty}  \epsilon_n G_{\epsilon_n}(x,X_{\infty}^{a^{\ast},b^{\ast}})=0, \\
\label{eq:ergodic2}&\lim_{n \to \infty}\left( \epsilon_n G_{\epsilon_n}(x,X_{\infty}^{a^{\ast},b^{\ast}}) - \epsilon_n G_{\epsilon_n}(x,X_{\infty}^{a^{\ast}_{\epsilon_n},b^{\ast}_{\epsilon_n}}) \right) =0,\\
\label{eq:ergodic3}&\lim_{n \to \infty} \epsilon_n G_{\epsilon_n}(x,X_{\infty}^{a^{\ast}_{\epsilon_n},b^{\ast}_{\epsilon_n}})- J(x,X_{\infty}^{a^{\ast},b^{\ast}},U^{a^{\ast},b^{\ast}},D^{a^{\ast},b^{\ast}})=0.
\end{align}
The limit \eqref{eq:ergodic1} is deduced from \cite[Theorem 2]{MO}.  To prove that the second limit \eqref{eq:ergodic2} holds, observe that for every $ X_{\infty}^{\mu_1} , X_{\infty}^{\mu_2} \in \mathcal{P}^{\infty},A\leq B  , \ A,B \in [-L,L], \ \epsilon>0 $ (the inequality strict if the process has unbounded variation) :
\begin{multline*}
\epsilon J_{\epsilon}(x,U^{A,B},D^{A.B},X_{\infty}^{\mu_1} )-\epsilon J_{\epsilon}(x,U^{A,B},D^{A.B},X_{\infty}^{\mu_2} ) \\ =\epsilon \E_x \left(\int_0^{\infty} e^{-\epsilon s}\left( c(X_s^{A,B},X_{\infty}^{\mu_1} )-c(X_s^{A,B},X_{\infty}^{\mu_2} ) \right)ds  \right) .
\end{multline*} 
Therefore:
\begin{equation}\label{eq:ergodic4}
\epsilon_n G_{\epsilon_n}(x,p^{a^{\ast},b^{\ast}}) - \epsilon_n G_{\epsilon_n}(x,p^{a^{\ast}_{\epsilon_n},b^{\ast}_{\epsilon_n}}) \leq 2 \sup_{y \in [-L,L]} \vert c(y,p^{a^{\ast},b^{\ast}})-c(y,p^{a^{\ast}_{\epsilon_n},b^{\ast}_{\epsilon_n}}) \vert . 
\end{equation}
Thus from the continuity of $c$ and Lemma \ref{L:convergencetotalvariation}, we conclude that the limit \eqref{eq:ergodic2} holds. Finally, the limit \eqref{eq:ergodic3} is deduced from Theorem \cite[Theorem 1]{MO}  and Lemma \ref{L:continuitypowerful}. \\ 
To prove $\rm{ii)}$, due to $\rm{i)}$, we use the limits in \eqref{eq:ergodic1} and \eqref{eq:ergodic2}. \\
For the case $x \in \lbrace a^\ast,b^\ast \rbrace$ simply observe $$ \left| G_{\epsilon_n}(y,X^{a^\ast,b^\ast}_{\infty})- G_{\epsilon_n}(z,X^{a^\ast,b^\ast}_{\infty}) \right| \leq  (q_d+q_u) \left| z-y \right| , \ \text{ for every } y,z \in \mathbb{R}$$ and the fact that the function $G(\cdot , X_{\infty}^{a^{\ast},b^{\ast}})$ is constant.
\end{proof}

\section{Examples}\label{C43:Examples}
We provide two examples, one for the discounted problem and one for the ergodic problem.
\subsection{Discounted MFG problem for a Compound Poisson process with two-sided exponential jumps}\label{ExamplePoissonMFG}
In this example $q:=q_d=q_u$ and $c(x,y)=x^2 h(y)$ with $h$ a convex and strictly positive function with minimum at zero.

We consider a compound Poisson process $X=\{X_t\}_{t\geq 0}$ with double-sided exponential jumps, given by
\begin{equation*}
X_t=x-\sum_{i=1}^{N^{(1)}_t}Y^{(1)}_i+\sum_{i=1}^{N^{(2)}_t}Y^{(2)}_i,
\end{equation*}
where $\{N^{(1)}_t\}_{t\geq 0}$ and 
$\{N^{(2)}_t\}_{t\geq 0}$ 
are two Poisson processes with respective positive intensities $\lambda_1, \ \lambda_2$;
$\{Y^{(1)}_i\}_{i\geq 1}$ and $\{Y^{(2)}_i\}_{i\geq 1}$ 
are two sequences of independent exponentially distributed random variables with respective positive parameters 
$\alpha_1,\alpha_2$. Furthermore, we assume $\E X_1=-\lambda_1/\alpha_1 +\lambda_2/\alpha_2=0$. For the best response function, for every fixed $y$ we are in the search of equilibrium points of
\begin{multline}\label{eq:CompoundPoissonDynkin1}
\inf_{\sigma(b)}\sup_{\tau(a)}\E   \left(\int_0^{\tau(a) \wedge \sigma(b)}  2X_s h(y) e^{-\epsilon s}ds \right. \\ \left.  +q e^{-\epsilon \tau(a)} \mathbf{1}_{\tau(a) \leq \sigma(b) }  
     -q e^{-\epsilon \sigma(b)} \mathbf{1}_{\sigma(b) <\tau(a)}   \right). 
\end{multline}
Using integration by parts, due to $X$ being a martingale we have:
$$h(y)\E e^{-\epsilon (\tau(a) \wedge \sigma(b))}X_{\tau(a)\wedge \sigma(b)}=-\epsilon h(y) \E \left(\int_0^{\tau(a)\wedge \sigma(b)}X_s e^{-\epsilon s}ds \right). $$
Therefore, we can rewrite \eqref{eq:CompoundPoissonDynkin1} as 
\begin{multline}\label{eq:CompoundPoissonDynkin2}
\frac{2h(y)}{\epsilon}\inf_{\sigma(b)}\sup_{\tau(a)}\E   \left(   (-X_{\tau(a)\wedge \sigma(b)})  e^{-\epsilon (\tau(a)\wedge \sigma(b)} \right. \\ \left. +\frac{\epsilon}{2h(y)} q  e^{-\epsilon \tau(a)} \mathbf{1}_{\tau(a) \leq \sigma(b) }  
     -\frac{\epsilon}{2h(y)} q e^{-\epsilon \sigma(b)} \mathbf{1}_{\sigma(b) <\tau(a)}   \right). 
\end{multline}

Let $\delta^y:=\frac{\epsilon}{2h(y)} q$. Using the fact that $-X$ is also a compound poisson process with exponential jumps, we can use the results of \cite{ASPISOSADECKI} (notice the indexes are inverted as we work with $-X$), that is the best response points $(a^y, b^y )$ are the solutions of the equations:

\begin{align}\label{eq:Bestresponse}
    &a^y=  -\delta^y -E_I+ F_I \frac{\displaystyle E_S e^{r_I (b^y-a^y)} -E_I G_S }{\displaystyle e^{(r_I +r_S)(b^y-a^y)} -G_I G_S },     \nonumber    \\
    &b^y=  \delta^y +E_S+ F_S \frac{-E_I e^{r_S(b^y-a^y)}+E_SG_I}{e^{(r_I+r_S)(b^y-a^y)}-G_I G_S},
\end{align}
with
\begin{align*}
    & E_I = \frac{1-\pi_I}{r_I}, & E_S = \frac{1-\pi_S}{r_S},  \\
    &F_I=\frac{r_1+r_2}{ r_I+\pi_I r_S}, &  F_S= \frac{r_I +r_S}{\pi_Sr_I+r_S}, \\
    &G_I=\frac{(1-\pi_I)r_S}{ r_I+\pi_I r_S}, &  G_S= \frac{(1-\pi_S)r_I}{\pi_S r_I +r_S}, \\
    & 
    \pi_I = r_I/\alpha_2,  & \pi_S=r_S/\alpha_1.
\end{align*}
and $-r_I<0<r_S$ the roots of the equation  $\phi_{-X}(z)=\epsilon $ (with $\phi_{-X} $ the characteristic exponent of $-X$), that is the roots of:
$$(\epsilon+\lambda_2+\lambda_2)z^2+(\alpha_2 (\lambda_1+\epsilon)-\alpha_1 (\lambda_2+\epsilon))z -\epsilon \alpha_2 \alpha_1=0   . $$
  Thus, using \eqref{eq:CompoundPoissonDynkin2} and $\eqref{eq:Bestresponse}$, we conclude the MFG equilibrium points $(a^{\ast},b^{\ast})$ are the roots of the equation:
\begin{align*}
    &a^{\ast}=  -\frac{\displaystyle \epsilon q}{2 h\left( \E f(X^{a^\ast,b^\ast}_{\infty}) \right) } -E_I+ F_I \frac{\displaystyle E_S e^{r_I (b^\ast-a^\ast)} -E_I G_S }{\displaystyle e^{(r_I +r_S)(b^\ast-a^\ast)} -G_I G_S },     \nonumber    \\
    &b^{\ast}=  \frac{\displaystyle \epsilon q}{2 h \left( \E f(X^{a^\ast,b^\ast}_{\infty})\right) } +E_S+ F_S \frac{-E_I e^{r_S(b^\ast-a^\ast)}+E_SG_I}{e^{(r_I+r_S)(b^\ast-a^\ast)}-G_I G_S},
\end{align*}
If, for example, $f(x)=\vert x \vert, \ c(x,y)=x^2 (\vert y \vert +1)$. What is left to    we need to compute the stationary distribution, see \cite[Chapter V]{AAGP}, $\pi^{a,b}$ of $X^{a,b}$, where:
\begin{equation}\label{E:stationary}
\pi^{a,b}[x,b]=\mathbf{P}(X_{\eta_{[x-a-b,x-a)^c}}\geq x-a ), \ x\in[a,b] ,
\end{equation}
where $\eta_{[x-a-b,x-a)^c}$ denotes the first entry of $X$ to the set $ [x-a-b,x-a)^c$
For this process, as $X$ is a martingale and the loss of memory property of the jumps, we have:
\begin{align*}
    &1= \P (X_{\eta_{[x-a-b,x-a)^c}}\geq x-a  )  + \P (X_{\eta_{[x-a-b,x-a)^c}}<x-a), \\
    & 0=\P (X_{\eta_{[x-a-b,x-a)^c}}\geq x-a  )\left(\int_0^{\infty}\alpha_2 e^{- \alpha_2 u}(u+x-a) du \right) \\   & +\P (X_{\eta_{[x-a-b,x-a)^c}}<x-a) \left(\int_{-\infty}^{0}\alpha_1e^{\alpha_1 u}(u +x-a-b)du \right) , 
    \end{align*}
we obtain that the stationary measure in \eqref{E:stationary} is of the form:
$$ \pi^{a,b}(dx)=\delta_a(dx) \frac{\displaystyle 1/\alpha_2}{b+1/\alpha_2+1/\alpha_1}+\frac{\displaystyle dx}{b+1/\alpha_2+\alpha_1} + \delta_b (x) \frac{\displaystyle a+1/\alpha_2}{b+1/\alpha_2+1/\alpha_1},$$
thus

$$\E X_{\infty}^{a,b} = \frac{\displaystyle a/\alpha_2}{b+1/\alpha_2+1/\alpha_1}+\frac{\displaystyle b^2/2 -a^2/2}{b+1/\alpha_2+\alpha_1} +  \frac{\displaystyle b(a+1/\alpha_2)}{b+1/\alpha_2+1/\alpha_1} $$

\begin{remark}\label{Remark:Nonuniqueness}
    On behalf of the uniqueness, due to only requirement of $f$ is to be continuous, on can construct examples with many solutions. For example, under the hypotheses of this subsection:
    \begin{itemize}
        \item[\rm{i)}] Consider a pair of Dynkin games defined as before except for the fact that instead of $\delta^y$ we take a couple of different positive constants $\delta_1,\delta_2$  (without MFG component) whose solutions are given in \eqref{eq:Bestresponse}.
        \item[\rm{ii)}] For $\delta_1$ and $\delta_2$, let us name the solutions $(a_1,b_1)$ and $(a_2,b_2)$ respectively.
        \item[\rm{iii)}]
        Now, consider $h$ and $f$ a pair of continuous positive functions such that 
        $$h \left( \E f(X^{a_1,b_1}_{\infty})   \right) =\delta_1, \qquad h \left( \E f(X^{a_2,b_2}_{\infty})   \right) =\delta_2 . $$
        \item[\rm{iv)}] For this case, both $(a_1,b_1)$ and $(a_2,b_2)$ are MFG equilibrium points for the problem defined in this subsection.
        \item[\rm{v)}] For example, if $f(x)=x,  \ h(y)=0.01 + e^y \vert \cos(y) \vert$ and $\alpha_1=1, \ \alpha_2=2, \lambda_2=3, \ \epsilon=0.1, q=0.5$ (with $\E X_1=0 $), we have at least two MFG equlibrium points whose value is approximately :
        $$( -5.846 , 6.038), \ (-0.581 , 0.810) .$$
    \end{itemize}
\end{remark}

\subsection{Ergodic MFG for Strictly stable process}
In this case $c(x,y)=x^2 (1+\vert y \vert ), \ q_d=q_u=q$ and $f(y)=y^2$.
This example adds a MFG component to the example provided in \cite[5.4]{MO}.
Here the L\'evy process $X$ is strictly $\alpha$- stable  with parameter $\alpha \in (1,2), 0<c^+<c^-$.
In other words $X$ is a pure jump process with finite mean and triplet $(0,0,\Pi)$, with jump measure
\[ 
\Pi(dx)= 
\begin{cases}
c^{+} x^{-\alpha -1}dx, & x>0, \\
c^{-} \vert x \vert^{-\alpha -1} dx ,& x<0.
\end{cases}
\]
The characteristic exponent is
\[
\phi(i \theta) \colon =\vert \theta \vert^{\alpha}(c^+ + c^-) \Big(1- i \text{sgn}(\theta)\tan \big(\frac{\pi \alpha}{2} \big) \frac{c^+ -c^-}{c^+ + c^-} \Big) .
\]
(see \cite[page 10]{KRI}). 
The stationary measure has Beta density $\pi^{0,d}(x)$ on $[0,d]$ with parameters
$(\alpha \rho, \alpha (1-\rho))$, i.e.
\begin{equation*}
\pi^{0,d}(x)=\frac1{d \beta(\alpha \rho, \alpha (1-\rho))} \left(1-\frac{x}{d} \right)^{\alpha \rho -1} 
\left(\frac{x}{d} \right)^{\alpha (1-\rho)-1}, 
\end{equation*}
where 
$$
\beta(u,v)=\int_0^1t^{u-1}(1-t)^{v-1}\,dt,
$$ 
is the Beta function and 
\[
\rho= \frac{1}{2}+ (\pi \alpha)^{-1} \arctan \bigg( \Big(\frac{c^+ - c^-}{c^+ + c^-} \Big)  \tan(\alpha \pi/2) \bigg). 
\]
From this
\begin{equation}\label{Ch3:Stablemoments}
    \int_0^dx\pi^{0,d}(x)\,dx=d\rho,\quad
\int_0^d(x-d\rho)^2\pi^{0,d}(x)\,dx=d^2{\rho(1-\rho)\over\alpha+1}.
\end{equation}

On the other hand, observe $\E_{\pi}(D^{0,d}_1)$  can be expressed explicitly
(see \cite[page 114]{AAGP}): 
\begin{multline}\label{E:stableProcesslossrate}
\E_{\pi}(D_1^{0,d})   = \frac{c^- \beta(2-\alpha \rho, \alpha \rho)+c^+ \beta(2-\alpha(1- \rho),\alpha (1-\rho))}{\beta(\alpha \rho, \alpha (1-\rho))\alpha (\alpha -1)(2-\alpha)}\frac{1}{d^{\alpha-1}}
    \\=\E_{\pi}(D_1^{0,1})\frac{1}{d^{\alpha-1}}.
\end{multline}
  Now, from the fact $\E X_1=0$, we have $\E_{\pi }U_1^{0,d}=\E_{\pi} D^{0,d}_1$ so the best response function $J$ is of the form (recall the starting variable $x$ can be ommited):

  \begin{equation*}
J((U^{a,b},D^{a,b}),X_{\infty}^{\eta})  =(1+p^{\eta} ) \E (X_{\infty}^{0,b-a}+a)^2+\frac{1}{d^{\alpha-1}}2q\E_{\pi}(D^{0,1}_1).   
\end{equation*}
Observe that the degenerate case $a=0=b$ can be discarded as the previous expression goes to $-\infty$ in that case.
Thus, by making the change of variable $d= b-a>0$ and differentiating respect to $a$ and \eqref{Ch3:Stablemoments} we deduce that for the best response  we have
\begin{equation*}
a=- \int_0^d x \pi^{0,d}(x)dx =-d\rho,
\end{equation*}
and the map to minimize with respect to $d$ is
\begin{equation*}
(1+p^\eta ) d^2{\rho(1-\rho)\over\alpha+1}+\frac{1}{d^{\alpha-1}}2q\E(D^{0,1}_1).    
\end{equation*}
By differentiation, the best response $(a^\ast,b^\ast)$ satisfies:
$$
b^\ast-a^\ast=d^*=\left(\frac{(\alpha^2-1)q \E(D^{0,1}_1)}{(1+p^\eta )\rho(1-\rho)}\right)^{1/(\alpha+1)}.
$$
Now, to find the equilibrium point, using \eqref{Ch3:Stablemoments}, we deduce
$$p=1+\E (X^{0,d}_{\infty}-d \rho)^2  = 1+ d^2 \frac{\rho (1-\rho)}{\alpha+1}. $$
Therefore we conclude that the points $(a^{\ast},b^{\ast})$ that define an ergodic MFG equlibrium are unique and are characterized by the equations:
\begin{align*}
   & a^{\ast}=-b^{\ast}\frac{\rho}{1-\rho} , \\
   & (b^*/(1-\rho))^{\alpha +1} \left(\frac{\alpha +1 +(b^*)^2 \rho/(1-\rho)}{\alpha +1}  \right)=\frac{(\alpha^2-1)q \E(D^{0,1}_1)}{\rho(1-\rho)}.
\end{align*}
With the change of variable $u:= (b^*)^{\alpha+1}$ we can solve the equation and obtain:
\begin{align*}
   & a^{\ast}=-b^{\ast}\frac{\rho}{1-\rho} , \\
   & b^*= \left(\frac{\displaystyle -\frac{1}{(1-\alpha)^{\alpha +1}} + \sqrt{\frac{1}{(\alpha -1)^{2\alpha +2} } +4 \frac{\rho}{(1-\rho)^{\alpha +2}} \frac{(\alpha^2-1)q \E(D^{0,1}_1)}{\rho(1-\rho)}    }}{\displaystyle 2 \frac{\rho}{(1-\rho)^{\alpha +2}} } \right)^{1/(\alpha+1)}.
\end{align*}

For example when we take the values $q=1/2, \ c^-=2, \ c^+=1, \ \alpha=3/2$ the MFG ergodic equilibrium $(a^{\ast},b^{\ast})\sim (-0.520,0.395)$.

\section{Relation to symmetric finite-player games }\label{C4:Nplayer}

In this section, 
we show that the assumption of stationary in the second variable implies that the equilibrium points of the MFGs are approximate  Nash equilibrium points in the $N$-player games. 
Moreover we need to work with a more restrictive hypotheses.
In order to formulate the result consider
\begin{itemize}
\item[\rm(i)] A filtered probability space 
$(\Omega,\mathcal{F}, \mathbf{F}=  \lbrace \mathcal{F}_t\rbrace_{t \geq 0},\mathbf{P})$ 
that satisfies the usual conditions, where all the processes are defined.
\item[\rm(ii)] Adapted independent L\'evy processes  $X,\{X^i\}_{i=1,2,\dots}$. We use the same definition for the set $\mathcal{A}$ with the clear distinction that now they are adapted to the new filtration. Nevertheless we only consider the subset of controls 
$$\mathcal{A}_{\infty}:= \lbrace \eta \in \mathcal{A}, \ X_{\infty}^{\eta} \in \mathcal{P}^{\infty} \rbrace. $$  
\item[\rm(iii)]
We denote  $\eta_i^{a,b}=(U^{i,a,b},D^{i,a,b})$ as the reflection of the process $X^i$ in the barriers $a \leq b$ (the inequality strict if the process has unbounded variation).
For simplicity and coherence  we denote its $X^{i,a,b}$ controlled process and $X^{i,a,b}_{\infty}$ its stationary adjoint random variable.
\item[\rm(iv)] There is a pair of real continuous positive functions with minimum at zero $g,h$ such that $g$ is twice contrinuously differentiable, strictly convex and. We take the cost function as
$$ c(x,y):=g(x)h(y),$$
which is clearly under \ref{A:Assumptions2MEANFIELDGAME}.
\end{itemize}
We define a \emph{vector of admissible controls} by
$$
\Lambda=(\eta_1, \dots ,\eta_N)
$$ 
and ,
$$
\aligned
\Lambda^{-i}&=(\eta_1, \dots, \eta_{i-1}, \eta_{i+1}, \dots ,\eta_N
),\\
(\eta,\Lambda^{-i})&=(\eta_1, \dots, \eta_{i-1},\eta,\eta_{i+1}, \dots ,\eta_N).
\endaligned
$$ 
for $a \leq b$, we define $\Lambda_{a,b}=((U_1^{a,b},D^{a,b}_1),\dots ,(U_N^{a,b},D^{a,b}_N))$. Moreover we denote
\begin{equation}\label{E:nashLEVY}
\bar{f}^{-i}_{\Lambda}=\frac{1}{N-1}  \sum_{j \neq i}^N f( X_{\infty}^{j,\eta_j}),
\quad
\bar{f}^{a,b,-i}=\frac{1}{N-1}  \sum_{j \neq i}^N f( X_{\infty}^{j,a,b}),
\end{equation}
and, given $\eta\in\mathcal{A}_{\infty}$,
for $(\eta, \Lambda^{-i})$, consider
\begin{align}\label{eq:NCOSTLEVY}
J_{\infty,N}^{i}(x,\eta, \Lambda^{-i})= 
\limsup_{T \rightarrow \infty} \frac{1}{T} \E_x & 
\Bigg( \int_0^T c \left(X_s^{i,\eta_i},\bar{f}^{-i}\right)ds
+q_u U^{i,\eta_i}_T+q_d D^{i,\eta_i}_T \Bigg), \nonumber \\
J_{\epsilon,N}(x,\eta,\Lambda^{-i} )  =  \E_x \Bigg( \int_{0}^{\infty}e^{-\epsilon s} & c(X_s^{i,\eta_i},\bar{f}^{-i})ds   + q_udU^{\eta_i}_s  \\ &  +q_d dD^{\eta_i}_s  +q_uu_0^{\eta_i}+q_dd_0^{\eta_i}\bigg), \nonumber
\end{align}
%%%%%
\begin{definition}\label{D:epsilonNashequilibriumLEVY}
For fixed $r,\epsilon>0$ and $N\in\mathbb{N}$,  a vector of admissible stationary bounded controls
$\Lambda=(\eta,\dots ,\eta_N)$  is called 
\begin{itemize}
    \item[\rm{(i)}]
an $r$-ergodic Nash equilibrium if 
for all $i$ and all $x\in\R$,
$$
J_{\infty,N}^{i}(x,\eta_i, \Lambda^{-i}) \leq J^i_{\infty,N}(x,\mu,\Lambda^{-i} )+r, \quad\text{for all } \mu \in \mathcal{A}_{\infty}. 
$$
\item[\rm{(ii)}] an $r,\epsilon$-discounted Nash equilibrium if for all $i$ and all $x \in \mathbb{R},$
$$
J_{\epsilon,N}^{i}(x,\eta_i, \Lambda^{-i}) \leq J^i_{\epsilon,N}(x,\mu,\Lambda^{-i} )+r,\quad\text{for all } \mu \in \mathcal{A}_{\infty}. 
$$
\end{itemize}

\end{definition}
First, we present a result for the class of reflecting controls, we omit the proof as it is exactly the same as \cite[Theorem 5.2]{CMO}.

 \begin{theorem}
Assume that the set of controls for each process $X^i, \ i=1, \dots ,N$, 
is the set of reflecting controls instead of $ \mathcal{A}_{\infty}$  and $f(x)=x$.
Then, if $(a,b)$ is an $\epsilon$-equilibrium (ergodic equilibrium) point for the mean-field game driven by $X$,
given $r>0$,
the vector of controls $\Lambda_{a,b}$
is an $r,\epsilon$-Nash ($r$-ergodic Nash) equilibrium for the $N$-player game,
for $N$ large enough.
 \end{theorem}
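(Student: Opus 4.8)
The plan is to exploit the product structure $c(x,y)=g(x)h(y)$ together with a law-of-large-numbers (propagation-of-chaos) argument, in order to reduce player $i$'s best-response problem in the $N$-player game to the mean-field problem against the constant aggregate $p^{a,b}$. I would treat the $\epsilon$-discounted statement in detail, the ergodic one being parallel. Fix $i$ and suppose the other players all use the reflecting control $(a,b)$, so that the aggregate affecting player $i$ is the random variable $\bar f^{a,b,-i}=\frac{1}{N-1}\sum_{j\neq i}X^{j,a,b}_\infty$. Since $f(x)=x$ and the $X^{j,a,b}_\infty$ are i.i.d. copies of $X^{a,b}_\infty$, all supported in the compact set $[a,b]$, the strong law of large numbers gives $\bar f^{a,b,-i}\to \E X^{a,b}_\infty=p^{a,b}$ almost surely; continuity and boundedness of $h$ on $[a,b]$ then yield, by dominated convergence, $\theta_N:=\E\big(h(\bar f^{a,b,-i})\big)-h(p^{a,b})\to 0$ as $N\to\infty$.

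Next I would factor the cost. As $X^i$ is independent of $\{X^j\}_{j\neq i}$ and $c(x,y)=g(x)h(y)$, conditioning on the aggregate gives, for every admissible $\mu$,
$$J^i_{\epsilon,N}(x,\mu,\Lambda^{-i}_{a,b})=\E\big(h(\bar f^{a,b,-i})\big)\,A(\mu)+B(\mu),$$
where $A(\mu)=\E_x\int_0^\infty e^{-\epsilon s}g(X^{i,\mu}_s)\,ds\ge 0$ and $B(\mu)$ gathers the control costs (including the initial jumps). The corresponding mean-field cost against the constant field is $h(p^{a,b})A(\mu)+B(\mu)$, and the hypothesis that $(a,b)$ is an $\epsilon$-equilibrium (Definition \ref{D:EquilibriumMFG}, Theorem \ref{T:MEANFIELDDISCOUNTEDPROBLEMSOLUTION}) means exactly that $h(p^{a,b})A(\eta_i^{a,b})+B(\eta_i^{a,b})\le h(p^{a,b})A(\mu)+B(\mu)$ for all $\mu$. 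Substituting the decomposition, the Nash gap $J^i_{\epsilon,N}(x,\eta_i^{a,b},\Lambda^{-i}_{a,b})-J^i_{\epsilon,N}(x,\mu,\Lambda^{-i}_{a,b})$ collapses to $\theta_N\big(A(\eta_i^{a,b})-A(\mu)\big)$.

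It then remains to control this single term, for which I would argue by dichotomy using $h_0:=\min_{[a,b]}h>0$. Write $C$ for a uniform upper bound on the fixed quantity $J^i_{\epsilon,N}(x,\eta_i^{a,b},\Lambda^{-i}_{a,b})$ (finite since $\E\big(h(\bar f^{a,b,-i})\big)$ stays bounded), and set $K:=C/h_0$. If $A(\mu)>K$ then $J^i_{\epsilon,N}(x,\mu,\Lambda^{-i}_{a,b})\ge h_0A(\mu)>C$ and the Nash inequality holds trivially; if $A(\mu)\le K$ the gap is at most $|\theta_N|\,(A(\eta_i^{a,b})+K)$, which is $\le r$ once $N$ is large. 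This establishes the $r,\epsilon$-discounted Nash property. The ergodic statement follows by the same scheme, with $A$ and $B$ replaced by their Cesàro-limit analogues, which exist in the reflecting regime by the renewal argument of Theorem \ref{ChToolsT:asmussen}, and with the constant factor $h(\bar f^{a,b,-i})$ again pulled out of the long-run average.

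The main obstacle I anticipate is making the propagation-of-chaos step uniform over all deviations $\mu$, and in particular coping with deviations whose running cost $A(\mu)$ is unbounded; the product form $c=g\cdot h$ is exactly what confines the mean-field dependence to the scalar factor $\theta_N$, while the strict positivity $h\ge h_0>0$ lets the cost itself rule out the expensive deviations.
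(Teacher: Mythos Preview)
Your argument is correct and follows essentially the same route as the paper, which omits the proof and refers to \cite[Theorem 5.2]{CMO}; the scheme there (and in the proof of the subsequent theorem in Section~\ref{C4:Nplayer}) is precisely your decomposition into a mean-field term plus a difference governed by the aggregate, together with the restriction to deviations whose running cost is bounded. Two small points worth tightening: first, the factorization $J^i_{\epsilon,N}(x,\mu,\Lambda^{-i}_{a,b})=\E\big(h(\bar f^{a,b,-i})\big)A(\mu)+B(\mu)$ is valid \emph{because} the deviation $\mu$ is a reflecting control and hence measurable with respect to the natural filtration of $X^i$ alone, so $X^{i,\mu}$ is independent of $\bar f^{a,b,-i}$---this is exactly where the hypothesis ``reflecting controls instead of $\mathcal{A}_\infty$'' is used, and it deserves to be said explicitly; second, the Nash gap is only \emph{bounded above} by $\theta_N\big(A(\eta_i^{a,b})-A(\mu)\big)$ (after invoking the mean-field optimality inequality), not equal to it, but that is all you need.
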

Alternatively, we can restrict the second moments to get another approximation result.
\begin{definition}
    For $K>0, \ \epsilon>0$, we define the class of controls  $\mathcal{B}^K_0, \ \mathcal{B}^k_{\epsilon} \subset \mathcal{A}_\infty$ as:

\begin{align}
  &  \mathcal{B}^K_0= \left\lbrace \eta \in \mathcal{A}_{\infty}: \ \limsup_{T \to \infty}\frac{1}{T} \E_x \left(\int_0^T g^2(X^{\eta}_s)ds \right) \leq K , \ \text{ for every } x\in \mathbb{R} \right\rbrace , \nonumber \\
& \mathcal{B}_{\epsilon}^K = \left\lbrace \eta \in \mathcal{A}_{\infty}: \ \E_x \left(\int_0^\infty g^2(X^{\eta}_s)e^{-\epsilon s} ds\right)\leq K, \ \text{ for every } x \in \mathbb{R} \right\rbrace. \nonumber
\end{align}
\end{definition}

\begin{theorem}
    Fix $N \in \mathbb{N}, \ K>0,  $ and $a < b$. Let the constant $\delta>0$ satisfy $\vert f(x)-f(y) \vert <1/N^2 $.
 \begin{itemize}
     \item[\rm{(i)}] Assume that the set of admissible controls is restricted to $\mathcal{B}^K_0$. If $(a,b)$ is an ergodic MFG-equilibrium and $\Lambda_{a,b} \in \mathcal{B}^K_0$, then $\Lambda_{a,b}$ is an $r$-ergodic Nash equilibrium for 
     $$r=\max_{z \in [a,b]}(h^2(z)) 4K e^{\frac{-2 \delta^2}{(b-a)^2N}}+2K/N .$$
     \item[\rm{(ii)}] Assume that the set of admissible controls is restricted to $\mathcal{B}^K_\epsilon$. If $(a,b)$ is an $\epsilon$-discounted MFG equilibrium and $\Lambda_{a,b} \in \mathcal{B}^K_{\epsilon}$, then $\Lambda_{a,b}$ is an $r,\epsilon$-discounted Nash equilibrium for
          $$r=\frac{\max_{z \in [a,b]}(h(z))^2 4Ke^{\frac{-2 \delta^2}{(b-a)^2N}}}{\epsilon}+\frac{2K}{N\epsilon} . $$

 \end{itemize}   
\end{theorem}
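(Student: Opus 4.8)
The plan is to bound the difference in cost that player $i$ experiences when deviating from the prescribed control $\eta_i^{a,b}$ to an arbitrary $\mu\in\mathcal{B}^K_\cdot$, and to show this difference is at most $r$. The key observation is that the MFG equilibrium $(a,b)$ is optimal against the \emph{deterministic} stationary constant $p^{a,b}=\E f(X^{a,b}_\infty)$, whereas in the $N$-player game player $i$ faces the \emph{random empirical average} $\bar f^{a,b,-i}=\frac{1}{N-1}\sum_{j\neq i}f(X^{j,a,b}_\infty)$. The entire gap therefore comes from replacing $p^{a,b}$ by $\bar f^{a,b,-i}$ inside the running cost $c(x,y)=g(x)h(y)$. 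Since the cost factorizes, for any control the error is controlled by
\begin{equation*}
\Big|\,c(X^{i,\mu}_s,\bar f^{-i})-c(X^{i,\mu}_s,p^{a,b})\,\Big|
= g(X^{i,\mu}_s)\,\big|h(\bar f^{-i})-h(p^{a,b})\big|,
\end{equation*}
so I would first reduce the whole problem to estimating $\E\big|h(\bar f^{-i})-h(p^{a,b})\big|$ uniformly, using $h$ continuous and the fact that $\bar f^{-i}$ concentrates around its mean $p^{a,b}$.

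The main probabilistic step is a concentration inequality for $\bar f^{a,b,-i}$. Because the $X^{j,a,b}_\infty$ are independent (the driving L\'evy processes $\{X^j\}$ are independent) and each takes values in $[a,b]$, the random variables $f(X^{j,a,b}_\infty)$ are independent and bounded, with $|f(x)-f(y)|$ small on $[a,b]$; this is exactly where the hypothesis $|f(x)-f(y)|<1/N^2$ together with the choice of $\delta$ enters, giving a bounded range for each summand. I would apply Hoeffding's inequality: writing $R$ for the oscillation of $f$ over $[a,b]$, one gets
\begin{equation*}
\mathbf{P}\Big(\big|\bar f^{a,b,-i}-p^{a,b}\big|\geq t\Big)\leq 2\exp\!\Big(-\frac{2(N-1)t^2}{R^2}\Big),
\end{equation*}
and matching $t$ and $R$ to the stated exponent $-2\delta^2/((b-a)^2N)$ produces the factor $e^{-2\delta^2/((b-a)^2N)}$ appearing in $r$. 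On the complementary (large-deviation) event I would bound $h(\bar f^{-i})-h(p^{a,b})$ by its maximum $\max_{z\in[a,b]}h(z)$-type crude bound, and combine with the $g^2$-integrability enforced by $\eta\in\mathcal{B}^K$ via Cauchy--Schwarz to turn the running-cost integral into the constant $K$; this is the source of the $4K$ and the $2K/N$ terms.

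Concretely, for part (i) I would split the time-averaged integral of $g(X^{i,\mu}_s)\,|h(\bar f^{-i})-h(p^{a,b})|$ according to whether $\bar f^{-i}$ is within the concentration radius of $p^{a,b}$ or not, apply Cauchy--Schwarz in each piece (using $\limsup_T \frac1T\E\int_0^T g^2\,ds\leq K$ to control the $g$-factor and the concentration bound to control the $h$-factor), and collect the two contributions into $\max_z h^2(z)\,4K\,e^{-2\delta^2/((b-a)^2N)}+2K/N$. Then, since $(a,b)$ is an MFG ergodic equilibrium, $J(x,\eta^{a,b},p^{a,b})\leq J(x,\mu,p^{a,b})$ for every $\mu$, and adding the two-sided error estimate upgrades this to the $r$-ergodic Nash inequality of Definition \ref{D:epsilonNashequilibriumLEVY}. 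Part (ii) is identical in structure with $\frac1T\int_0^T(\cdot)\,ds$ replaced by $\int_0^\infty e^{-\epsilon s}(\cdot)\,ds$, which is why both error terms acquire the extra $1/\epsilon$ factor. The main obstacle I anticipate is making the concentration step fully rigorous: one must verify that $f(X^{j,a,b}_\infty)$ are genuinely i.i.d.\ bounded with the claimed range so that Hoeffding applies cleanly, and that the crude bound on the large-deviation event does not destroy integrability --- this is precisely what the $\mathcal{B}^K$ restriction on second moments of $g$ is designed to guarantee.
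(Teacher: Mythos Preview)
Your proposal is correct and follows essentially the same route as the paper: both reduce the $r$-Nash inequality to the two-sided estimate $\sup_{\mu}|J^i_{\cdot,N}(\mu,\Lambda^{-i}_{a,b})-J(\mu,X^{a,b}_\infty)|\leq r/2$ via the MFG optimality $J(\eta^{a,b},p^{a,b})\leq J(\mu,p^{a,b})$, then exploit the factorization $c(x,y)=g(x)h(y)$ together with Cauchy--Schwarz and the $\mathcal{B}^K$ bound to isolate the $h$-factor, and finally apply Hoeffding's inequality to the i.i.d.\ bounded variables $f(X^{j,a,b}_\infty)$. The paper additionally introduces an auxiliary class $\mathcal{C}$ of controls whose $N$-player cost is below a fixed constant $C$ (twice the cost of the reflecting strategy), but this restriction plays no role in the final estimate since the Cauchy--Schwarz step only uses membership in $\mathcal{B}^K$; your omission of it is harmless.
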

\begin{proof}
    We only prove the first claim as the other is analogue.
    First observe, as $f(x)$ is continuous, the set $f([a,b])$ is a closed interval, denote it by $[m,M]$,
and  observe that 
$$
(X^{i,a,b}_s,\bar f^{-i,a,b}_s)\in [a,b]\times[m,M], 
$$
that is a product of closed intervals. 
As for $m \leq y \leq M$, we have   
\begin{equation}\label{eq:cbounded}
    c(x,y) \leq g(x)\max_{z \in [m,M]}h(z)  =: g(x)H(m,M)  ,
\end{equation}
we define
 $$C:= \lim_{T \to \infty} 2 \left( \frac{H(m,M)}{T}\E_x \left( \int_0^T g ( X^{a,b}_s ) ds +dU_T^{a,b}q_u+dD_T^{a,b}q_d \right) \right) . $$
  Let $\mathcal{C}$ be defined as the set of strategies $\eta \in \mathcal{B}^K_0$ that satisfy 
  $$J_{\infty,N}^i (\eta, \Lambda^{-i}_{a,b})<C , \quad \text{ for all } i=1, \dots N, \ N \in \mathbb{N}. $$
 Now, observe that for all $i=1, \dots N, \ N \in \mathbb{N}$, by taking $\eta = (U^{a,b},D^{a,b})$, we have  $ J_{\infty,N}^i (\eta,\Lambda^{-i})\leq C/2< C ,$ thus
\begin{equation*}
    \inf_{\eta \in \mathcal{B}^K_0} J_{\infty,N}^i (\eta,\Lambda^{-i}_{a,b})=   \inf_{\eta \in \mathcal{C}} J_{\infty,N}^i (\eta,\Lambda^{-i}_{a,b}), \quad \text{ for all } i=1, \dots N, \ N \in \mathbb{N}.  
\end{equation*}
Furthermore, observe that

\begin{align}
J_{\infty,N}^{i}( \Lambda_{a,b})&=  J_{\infty,N}^{i}(\eta,\Lambda^{-i}_{a,b})+\big(J_{\infty,N}^{i}(\Lambda_{a,b})-J(\eta,X^{a,b}_{\infty}\big) \nonumber
\\& +\big(J(\eta,X^{a,b}_{\infty}\big)-J_{\infty,N}^{i}(\eta,\Lambda^{-i}_{a,b}) \big)\nonumber \\
                &\leq J_{\infty,N}^i(\eta,\Lambda^{-i}_{a,b})+\big(J_{\infty,N}^{i}(\Lambda_{a,b})-J((U^{a,b},D^{a,b}),X^{a,b}_{\infty}\big)\nonumber\\&+\big(J(\eta,X^{a,b}_{\infty} )-J_{\infty,N}^{i}(\eta,\Lambda^{-i}_{a,b}) \big).  \nonumber
\end{align}
Thus, to prove (\rm{i}), it is enough to demostrate 
 \begin{equation}\label{E:NaproximmationSubstraction2}
\sup_{\eta\in\mathcal{C}}\vert  J_{\infty,N}^{i}(\eta, \Lambda^{-i}_{a,b})-J(\eta, X^{a,b}_{\infty})\vert  \leq r/2,
\end{equation}
as the term $\vert J_{\infty,N}^{i} (\Lambda_{a,b})- J((U^{a,b},D^{a,b}),X^{a,b}_{\infty}) \vert$ is smaller or equal than the supremum defined in \eqref{E:NaproximmationSubstraction2}, because $(U^{a,b},D^{a,b}) \in \mathcal{C}$. For that endeavor, we need to prove that
\begin{equation}\label{eq:finalapproximation}
\left|\frac1T\E_x\int_0^T\left(c(X^{i,\eta_i}_s,\bar f^{a,b,-i})-c(X^{i,\eta_i}_s,\E_x(f(X^{a,b}_{\infty}))\right)\,ds\right|,
\end{equation}
is smaller or equal to $r/2$.
With Cauchy-Schwarz inequality and the fact that $\eta^i \in \mathcal{B}_0^K$, we deduce the expression \eqref{eq:finalapproximation} is smaller or equal than 
\begin{equation}\label{eq:finalapproximation2}
\left|\frac{K}{T}\E_x\int_0^T\left(h(\bar f^{a,b,-i})-h(\E_x(f(X^{a,b}_{\infty}))\right)^2\,ds\right|.
\end{equation}
Finally, we use Hoeffding's inequality for bounded random variables, the fact $\vert h(x)-h(y)\vert \leq b^2+a^2$ and the definition of $\delta$ to conclude \eqref{eq:finalapproximation2} is smaller or equal than $r/2$, thus concluding the proof of the theorem.

\end{proof}

%

%%%%%%%%%%%%%%%%%%%%%%%%%%%%%%%%%%%%%%%%%%
%%%%%%%%%%%%%%%%%%%%%%%%%%%%%%%%%%%%%%%%%%
%%%%%%%%%%%%%%%%%%%%%%%%%%%%%%%%%%%%%%%%%%
%
\nocite{*}

%%%%%%%%%%%%%%%%%%%%%%%%%%%%%%%%%%
%%%%%%%%%%%%%%%%%%%%%%%%%%%%%%%%%%
%%%%%%%%%%%%%%%%%%%%%%%%%%%%%%%%%%
%%%%%%%%%%%%%%%%%%%%%%%%%%%%%%%%%%
%%%%%%%%%%%%%%%%%%%%%%%%%%%%%%%%%%
%%%%%%%%%%%%%%%%%%%%%%%%%%%%%%%%%%

\begin{thebibliography}{00}
%%%%%%%%%%%%%%%%%%%%%%%%%%%%%%%%%%
%%%%%%%%%%%%%%%%%%%%%%%%%%%%%%%%%%
%%%%%%%%%%%%%%%%%%%%%%%%%%%%%%%%%%
\bibitem[A{\"\i}d et al.(2023)]{ABF}
A{\"\i}d, R., Basei, M. \& Ferrari, G.  
\textit{A Stationary Mean-Field Equilibrium Model of Irreversible Investment in a Two-Regime Economy.} (2023).
\url{ 	https://doi.org/10.48550/arXiv.2305.00541} 
%%%%%%%%%%%%%%%%%%%%%%%%%%%%%%%%%%
%%%%%%%%%%%%%%%%%%%%%%%%%%%%%%%%%%%%%%%%
%%%%%%%%%%%%%%%%%%%%%%%%%%%%%%%%%%
%%%%%%%%%%%%%%%%%%%%%%%%%%%%%%%%%%
%%%%%%%%%%%%%%%%%%%%%%%%%%%%%%%%%%
\bibitem[Alvarez(2018)]{Alvarez}
\textsc{Alvarez, L.H.R.} (2018). \textit{A Class of Solvable Stationary Singular Stochastic Control Problems}, (2018).
\textit{arXiv preprint arXiv:1803.03464v1}  \
\url{ 	https://doi.org/10.48550/arXiv.1803.03464} 
%%%%%%%%%%%%%%%%%%%%%%%%%%%%%%%%%%
%%%%%%%%%%%%%%%%%%%%%%%%%%%%%%%%%%
%%%%%%%%%%%%%%%%%%%%%%%%%%%%%%%%%%
%%%%%%%%%%%%%%%%%%%%%%%%%%%%%%%%%%
%%%%%%%%%%%%%%%%%%%%%%%%%%%%%%%%%%
%%%%%%%%%%%%%%%%%%%%%%%%%%%%%%%%%%
\bibitem[Andersen et al.  (2015)]{AAGP}
Andersen, L.N., Asmussen, S., Glynn, P.W. \& Pihlsgård, M. \textit{ L\'evy Processes with Two-Sided Reflection}. L\'evy Matters V. Lecture Notes in Mathematics(), vol \textbf{ 2149}. Springer, Cham. (2015). \url{https://doi.org/10.1007/978-3-319-23138-9_2}
%%%%%%%%%%%%%%%%%%%%%%%%%%%%%%%%%%
%%%%%%%%%%%%%%%%%%%%%%%%%%%%%%%%%%
%%%%%%%%%%%%%%%%%%%%%%%%%%%%%%%%%%

%%%%%%%%%%%%%%%%%%%%%%%%%%%%%%%%%%
%%%%%%%%%%%%%%%%%%%%%%%%%%%%%%%%%%

%%%%%%%%%%%%%%%%%%%%%%%%%%%%%%%%%%
\bibitem[Arapostathis et al. (2019)]{ACPZ}
Arapostathis, A., Caffarelli, L., Pang, G. \& Zheng, Y. \textit{Ergodic Control of a Class of Jump Diffusions with Finite Lévy Measures and Rough Kernels}, SIAM Journal on Control and Optimization,
\textbf{57},(2),1516-1540, (2019).
\url{https://doi.org/10.1137/18M1166717}.
%%%%%%%%%%%%%%%%%%%%%%%%%%%%%%%%%%
%%%%%%%%%%%%%%%%%%%%%%%%%%%%%%%%%%
%%%%%%%%%%%%%%%%%%%%%%%%%%%%%%%%%% 
\bibitem[Asmussen S. (2008)]{ASMUSSENQUEUES}
Asmussen, S. \textit{Applied Probability and Queues}, Second Edition, Stochastic Modelling and Applied Probability, \textbf{51}. Springer Sciene \& Business Media, (2008). 
%%%%%%%%%%%%%%%%%%%%%%%%%%%%%%%%%%%%%%%%%%%%%%%%%%%%%%%%%%%%%%%%%%%%%%%%%%%%%%%%%%%%%%%%%%%%%%%%%%%%%%%%%%%%%%%%%%%%%%%%%%
\bibitem[Asmussen and Taksar(1997)]{AT}
Asmussen, S. and Taksar, M. \textit{Controlled diffusion models for optimal dividend pay-out}. Insurance: Mathematics and Economics, \textbf{20}, 1–15, (1997). \ \url{https://doi.org/10.1016/S0167-6687(96)00017-0}
%%%%%%%%%%%%%%%%%%%%%%%%%%%%%%%%%%
%%%%%%%%%%%%%%%%%%%%%%%%%%%%%%%%%%
%%%%%%%%%%%%%%%%%%%%%%%%
%
\bibitem[Aspirot, Mordecki \& Sosa (2024)]{ASPISOSADECKI}
Aspirot, L., Mordecki, E. \& Sosa, A.
\textit{Dynkin Games for L\'evy Processes}.
 	\textit{preprint arXiv:2410.23509}, (2024)
      \url{https://arxiv.org/abs/2410.23509}.
%%%%%%%%%%%%%%%%%%%%%%%%%%%%%%%%%%%%%%%%%%%%%%%%%%%%%%%%%%%%%%%%%%%%%%%%%%%%%%%%%%%%%%%%%%%%%%%%%%%%%
%%%%%%%%%%%%%%%%%%%%%%%%%%%%%%%%%%%%%%%
%%%%%%%%%%%%%%%%%%%%%%%%%%%%%%%%%%%%%
%%%%%%%%%%%%%%%%%%%%%%%%%%%%%%%%%%
\bibitem[Benazzoli et al. (2020)]{BCP}
 Benazzoli C., Campi L. \&  Di Persio L.
\textit{Mean field games with controlled jump–diffusion dynamics: Existence results and an illiquid interbank market model},
Stochastic Processes and their Applications,
\textbf{130}, Issue 11, 6927-6964, (2020).
\url{https://doi.org/10.1016/j.spa.2020.07.004}.
%%%%%%%%%%%%%%%%%%%%%%%%%%%%%%%%%%%%%
%%%%%%%%%%%%%%%%%%%%%%%%%%%%%%%%%%%%%%
%%%%%%%%%%%%%%%%%%%%%%%%%%%%%%%%%%%%%%
%%%%%%%%%%%%%%%%%%%%%%%%%%%%%%%%%%
%%%%%%%%%%%%%%%%%%%%%%%%%%%%%%%%%%
\bibitem[Bensoussan et. al. (2020)]{BDTY}
Bensoussan A., Djehiche B., Tembine H. \& Yam, P., (2020). \textit{Mean-Field-Type Games with Jump and Regime Switching},  Dynamic Games and Applications, Springer,  \textbf{10}(1),  19-57, (2020).
\url{https://doi.org/10.1007/s13235-019-00306-2}
%%%%%%%%%%%%%%%%%%%%%%%%%%%%%%%%%%%%%%%%%
%%%%%%%%%%%%%%%%%%%%%%%%%%%%%%%%%%%%%%%%%%
%%%%%%%%%%%%%%%%%%%%%%%%%%%%%%%%%%%%%%%%
\bibitem[Bertoin (1996)]{B}
Bertoin, J.
\textit{L\'evy processes}. 
Cambridge University Press, Series Number 121, (1996).
%%%%%%%%%%%%%%%%%%%%%%%%%%%%%%%%%%
%%%%%%%%%%%%%%%%%%%%%%%%%%%%%%%%%%
%%%%%%%%%%%%%%%%%%%%%%%%%%%%%%%%%%
%%%%%%%%%%%%%%%%%%%%%%%%%%%%%%%%%%%%%%%%%%%%%%%%%%%%%%%%%%%%%%%%%%%%%%%%%%%
\bibitem[Boetius (2005)]{boetius}
Boetius, F. \textit{Bounded variation singular stochastic control and Dynkin game}.
\textit{SIAM J. CONTROL OPTIM. Society for Industrial and Applied Mathematics}
 \textbf{44}, 1289–1321, (2005).
%%%%%%%%%%%%%%%%%%%
%%%%%%%%%%%%%%%%%%%%
%%%%%%%%%%%%%%%%%%%%%%%%%%%%%%%%%%
%%%%%%%%%%%%%%%%%%%%%%%%%%%%%%%%%%
%%%%%%%%%%%%%%%%%%%%%%%%%%%%%%%%%%
%%%%%%%%%%%%%%%%%%%%%%%%%%%%%%%%%%
%%%%%%%%%%%%%%%%%%%%%%%%%%%%%%%%%%
%%%%%%%%%%%%%%%%%%%%%%%%%%%%%%%%%%
%%%%%%%%%%%%%%%%%%%%%%%%%%%%%%%%%%
\bibitem[Cao and Guo(2022)]{CG}
Cao, H. \& Guo, X. 
\textit{MFGs for partially reversible investment},
Stochastic Processes and their Applications \textbf{150}, 995--1014, (2022). 
\url{https://doi.org/10.1016/j.spa.2020.09.006}
%%%%%%%%%%%%%%%%%%%%%%%%%%%%%%%%%%
%%%%%%%%%%%%%%%%%%%%%%%%%%%%%%%%%%%%%%
%%%%%%%%%%%%%%%%%%%%%%%%%%%%%%%%%5
%%%%%%%%%%%%%%%%%%%%%%%%%%%%%%%%%%
%%%%%%%%%%%%%%%%%%%%%%%%%%%%%%%%%%
\bibitem[Cao et al.(2023)]{CDF}
Cao, H., Dianetti, J., Ferrari, G. 
\textit{Stationary Discounted and Ergodic Mean Field Games of Singular Control}. 
Mathematics of Operations Research 48, 1871--1898, (2023)
%%%%%%%%%%%%%%%%%%%%%%%%%%%%%%%%%%
%%%%%%%%%%%%%%%%%%%%%%%%%%%%%%%%%%
%%%%%%%%%%%%%%%%%%%%%%%%%%%%%%%%%%
\bibitem[Carmona(2021)]{Carmona}
Carmona, R. 
\textit{Applications of mean field games in financial engineering and economic theory}.
Mean field games, 
Proceedings of Symposia in Applied Mathematics, 78, 165--219, (2021).
%%%%%%%%%%%%%%%%%%%%%%%%%%%%%%%%%%
%%%%%%%%%%%%%%%%%%%%%%%%%%%%%%%%%%
%%%%%%%%%%%%%%%%%%%%%%%%%%%%%%%%%%
%%%%%%%%%%%%%%%%%%%%%%%%%%%%%%%%%%
\bibitem[Carmona and Delarue(2018)]{CD(2018)}
\textsc{Carmona, R. \& Delarue, F.} 
Probabilistic theory of mean field games with applications.
Springer, (2018).
%%%%%%%%%%%%%%%%%%%%%%%%%%%%%%%%%%
%%%%%%%%%%%%%%%%%%%%%%%%%%%%%%%%%%
%%%%%%%%%%%%%%%%%%%%%%%%%%%%%%%%%%
\bibitem[Carmona, et. al. (2013)]{CFL}
Carmona, R. , Fouque J. \& Sun, L. \textit{Mean field games \&
systemic risk}. Communications in Mathematical Sciences, \textbf{13}(4), 911–933, (2013).
\url{https://doi.org/10.4310/CMS.2015.v13.n4.a4}.
%%%%%%%%%%%%%%%%%%%%%%%%%%%%%%%%%%
%%%%%%%%%%%%%%%%%%%%%%%%%%%%%%%%%%
%%%%%%%%%%%%%%%%%%%%%%%%%%%%%%%%%%
\bibitem[Christensen, S. , et. al. (2021)]{Tobi}
      Christensen, S., Anne, B. \& Sohr, T.  
      \textit{Competition versus Cooperation: A class of solvable mean field impulse control problems}. 
      SIAM J. Control. Optim., \textbf{59}, 3946-3972, (2021).
      \url{https://arxiv.org/abs/2010.06452}.
%%%%%%%%%%%%%%%%%%%%%%%%%%%%%%%%%%
\bibitem[Christensen et al.(2023)]{CMO}
Christensen, S., Mordecki, E. \& Oli\'u, F. 
\textit{Two sided ergodic singular control and mean field game for diffusions}, Decisions Econ Finan., (2023).
 \url{https://doi.org/10.1007/s10203-024-00464-y}

%%%%%%%%%%%%%%%%%%%%%%%%%%%%%%%%%%
%%%%%%%%%%%%%%%%%%%%%%%%%%%%%%%%%%
%%%%%%%%%%%%%%%%%%%%%%%%%%%%%%%%%%
\bibitem[Clark (2010)]{CLARK}
Clark. W. \textit{Mathematical Bioeconomics: The Mathematics of Conservation.}
Third edition. John Wiley \& Sons, (2010).
%%%%%%%%%%%%%%%%%%%%%%%%%%%%%%%%%%
%%%%%%%%%%%%%%%%%%%%%%%%%%%%%%%%%%
%%%%%%%%%%
%%%%%%%%%%%%%%%%%%%%%%%%%%%%%%%%%%
%%%%%%%%%%%%%%%%%%%%%%%%%%%%%%%%%%
%%%%%%%%%%%%%%%%%%%%%%%%%%%%%%%%%%
%%%%%%%%%%%%%%%%%%%%%%%%%%%%%%%%%
%%%%%%%%%%%%%%%%%%%%%%%%%%%%%%%%%%
%%%%%%%%%%%%%%%%%%%%%%%%%%%%%%%%%%
%%%%%%%%%%%%%%%%%%%%%%%%%%%%%%%%%%
\bibitem[Festa and Göttlich(2018)]{ADRIANOFIESTA!}
Festa, A. \& G\"ottlich, S.
\textit{A Mean Field Game approach for multi-lane traffic management}.
IFAC-PapersOnLine 51(32), 793--798, (2018).
%%%%%%%%%%%%%%%%%%%%%%%%%%%%%%%%%%
%%%%%%%%%%%%%%%%%%%%%%%%%%%%%%%%%%
%%%%%%%%%%%%%%%%%%%%%%%%%%%%%%%%%%%%%%%%%%%%%%%%%%%%%%%%
%%%%%%%%%%%%%%%%%%%%%%%%%%%%%%%%%%%%%%%%%%%%%%%%%%%%%%%%%%%%%%%%%%%%%%%%%%%%%%%%%%%%%%%%%%%%%%%%%%%%%%%%%%%%%%%%%%%%%%%%%
\bibitem[Fu and Ulrich (2017)]{FU}
Fu, G. \&  Horst. U. \textit{Mean field games with singular controls}, SIAM
Journal on Control and Optimization, \textbf{55}(6), 3833–3868, (2017).
\url{https://doi.org/10.1137/17M1123742}.
%%%%%%%%%%%%%%%%%%%%%%%%%%%%%%%%%%
%%%%%%%%%%%%%%%%%%%%%%%%%%%%%%%%%%
%%%%%%%%%%%%%%%%%%%%%%%%%%%%%%%%%%
%%%%%%%%%%%%%%%%%%%%%%%%%%%%%%%%%%
%%%%%%%%%%%%%%%%%%%%%%%%%%%%%%%%%%
\bibitem[Guo and Renyuan (2019)]{GR}
Guo, X. \& Xu, R. \textit{Stochastic games for fuel followers problem: N vs
MFG}, SIAM Journal on Control and Optimization, \textbf{57}(1):659–692, (2019).
\url{https://doi.org/10.1137/17M1159531}
%%%%%%%%%%%%%%%%%%%%%%%%%%%%%%%%%%%%
%%%%%%%%%%%%%%%%%%%%%%%%%%%%%%%%%%
\bibitem[Guo ad Tomecek (2008)]{GT0}
Guo X. \& Tomecek P. \textit{CONNECTIONS BETWEEN SINGULAR CONTROL AND
OPTIMAL SWITCHING}. Society for Industrial and Applied Mathematics, no 1, \textbf{47}, 421-433, (2008). \url{https://doi.org/10.1137/060669024}
%%%%%%%%%%%%%%%%%%%%%%%%%%%%%%%%%%%%%%%%%%%%%%%%%%%%%%%%%%%%%%%%%%%%%%%
\bibitem[Guo and Tomecek (2009)]{GT}
Guo X. \& Tomecek P. \textit{A CLASS OF SINGULAR CONTROL PROBLEMS AND THE SMOOTH FIT PRINCIPLE}, Society for Industrial and Applied Mathematics, no 6, \textbf{47}, 3076-3099, (2009). \url{https://doi.org/10.1137/070685336}

%%%%%%%%%%%%%%%%%%%%%%%%%%%%%%%%%
%%%%%%%%%%%%%%%%%%%%%%%%%%%%%%%%%%
%%%%%%%%%%%%%%%%%%%%%%%%%%%%%%%%%%
%%%%%%%%%%%%%%%%%%%%%%%%%%%%%%%%%%
\bibitem[H\o jgaard  and Taksar (2001)]{HT}
\textsc{H\o jgaard, B.} \& \textsc{Taksar, M.} \textit{Optimal risk control for a large corporation in the presence
of returns on investments}, Finance \& Stochastics, \textbf{5}, 527–547, (2001).
\url{https://doi.org/10.1007/PL00000042}
%%%%%%%%%%%%%%%%%%%%%%%%%%%%%%%%%%
%%%%%%%%%%%%%%%%%%%%%%%%%%%%%%%%%%
%%%%%%%%%%%%%%%%%%%%%%%%%%%%%%%%%%
%%%%%%%%%%%%%%%%%%%%%%%%%%%%%%%%%%
%%%%%%%%%%%%%%%%%%%%%%%%%%%%%%%%%%

\bibitem[Jack and Zervos (2006)]{AJackZervos}
Andrew, J. \& Mihail, Z. 
\textit{A singular control problem with an expected and a pathwise ergodic performance criterion},
Journal of Applied Mathematics and Stochastic Analysis, (2006).  \url{http://eudml.org/doc/53397}.
%%%%%%%%%%%%%%%%%%%%%%%%%%%%%%%%
%%%%%%%%%%%%%%%%%%%%%%%%%%%%%%%%%
\bibitem[Jacod and Shiryaev (2003)]{JJAS} 
Jacod, J. \& Shiryaev, A.N.
\textit{Limit Theorems for Stochastic Processes}.
2nd. ed. Springer-Verlag, Berlin, (2003).
%%%%%%%%%%%%%%%%%%%%%%%%%%%%%%%%%%
%%%%%%%%%%%%%%%%%%%%%%%%%%%%%%%%%%
%%%%%%%%%%%%%%%%%%%%%%%%%%%%%%%%%%
\bibitem[Jeanblanc-Picqué  and Shiryaev (1995)]{JS}
Jeanblanc-Picqué, M. \& Shiryaev, A. N. \textit{Optimization of the flow of dividends},
Russian Math. Surveys, \textbf{50}, 257 – 277, (1995).
\url{https://doi.org/10.1070/RM1995v050n02ABEH002054}
%%%%%%%%%%%%%%%%%%%%%%%%%%%%%%%%%%
%%%%%%%%%%%%%%%%%%%%%%%%%%%%%%%%%%
%%%%%%%%%%%%%%%%%%%%%%%%%%%%%%%%%%
\bibitem[Karatzas (1983)]{KARATZAS}
Karatzas. \textit{A class of singular stochastic control problems}, Adv. Appl. Prob., \textbf{15}(2):225-254, (1983).
\url{https://doi.org/10.2307/1426435}
%%%%%%%%%%%%%%%%%%%%%%%%%%%%%%%%%%
\bibitem[Karatzas and Wang (2003)]{KW}
Karatzas, I. \& Wang, H. \textit{Connections between bounded-variation control and Dynkin games}, (2003).
%%%%%%%%%%%%%%%%%%%%%%%%%%%%%%%%%%
\bibitem[Kyprianou(2006)]{KRI}
Kyprianou, A.E.
\textit{Introductory Lectures on Fluctuations of L\'evy Processes with Applications.} 
Springer, (2006).
%%%%%%%%%%%%%%%%%%%%%%%%%%%%%%%%%%
%%%%%%%%%%%%%%%%%%%%%%%%%%%%%%%%%%
%%%%%%%%%%%%%%%%%%%%%%%%%%%%%%%%%%
%%%%%%%%%%%%%%%%%%%%%%%%%%%%%%%%%%
\bibitem[Kunwai et al. (2022)]{KXYZ}
Kunwai, K., Xi, F., Yin, G. \&  Zhu, C. \textit{On an Ergodic Two-Sided Singular Control Problem}, Appl. Math Optim., \textbf{86}, (2022). \url{https://doi.org/10.1007/s00245-022-09881-0}
%%%%%%%%%%%%%%%%%%%%%%%%%%%%%%%%%%
%%%%%%%%%%%%%%%%%%%%%%%%%%%%%%%%%%
%%%%%%%%%%%%%%%%%%%%%%%%%%%%%%%%%%
\bibitem[Lacker (2015)]{Lacker}
Lacker, D. \textit{Mean field games via controlled martingale problems: existence
of Markovian equilibria}, Stochastic Processes and their Applications, \textbf{125}:2856–
2894, (2015). 
\url{https://doi.org/10.1016/j.spa.2015.02.006}.
%%%%%%%%%%%%%%%%%%%%%%%%%%%%%%%%%%%%%%%
%%%%%%%%%%%%%%%%%%%%%%%%%%%%%%%%%%%%%%
%%%%%%%%%%%%%%%%%%%%%%%%%%%%%%%%%%%%%%%
\bibitem[Lacker and Zariphopoulou (2019)]{LackerZ}
Lacker, D. \& Zariphopoulou, T. \textit{Mean field and N-agent games for
optimal investment under relative performance criteria}, Mathematical Finance,
\textbf{0}(0), (2019).  \url{https://doi.org/10.1111/mafi.12206}.
%%%%%%%%%%%%%%%%%%%%%%%%%%%%%%%%%%%%%%%%%%%%%%%%%%%%%%%%%%%%%%%%%%%%%%%%%%%%%%%%%%%%%%%%%%%%%
%%%%%%%%%%%%%%%%%%%%%%%%%%%%%%%%%%
%%%%%%%%%%%%%%%%%%%%%%%%%%%%%%%%%%
\bibitem[Lasry and Lions (2006)]{LL}
Lasry, J-M. \& Lions P-L.
\textit{Jeux à champ moyen. I – Le cas stationnaire},
Comptes Rendus Mathematique,
 \textbf{343}, Issue 9, (2006).
\url{https://doi.org/10.1016/j.crma.2006.09.019}.

%%%%%%%%%%%%%%%%%%%%%%%%%%%%%%%%%%
%
%%%%%%%%%%%%%%%%%%%%%%%%%%%%%%%%%%
\bibitem[Menaldi and Robin (1984)]{MenaldiRobin}
Menaldi, J.L. \& Robin, M. \textit{Some singular control problem with long term average criterion}. Lecture Notes in Control and Information Sciences, vol \textbf{59}, (1984).
\url{https://api.semanticscholar.org/CorpusID:124873068}
%%%%%%%%%%%%%%%%%%%%%%%%%%%%%%%%%%
%%%%%%%%%%%%%%%%%%%%%%%%%%%%%%%%%%
%%%%%%%%%%%%%%%%%%%%%%%%%%%%%%%%%%
\bibitem[Menaldi and Robin (2013)]{MenaldiRobin2013}
Menaldi, J. L., \& Robin, M. \textit{Singular ergodic control for multidimensional Gaussian–Poisson processes}, Stochastics, \textbf{85}(4), 682–691, (2013). \url{https://doi.org/10.1080/17442508.2013.795569}
%%%%%%%%%%%%%%%%%%%%%%%%%%%%%%%%%%
%%%%%%%%%%%%%%%%%%%%%%%%
%%%%%%%%%%%%%%%%%%%%%%%%%%%%%%%%%%
\bibitem[Mordecki and Oli\'u (2024)]{MO}
Mordecki, E. \& Oli\'u, F. \textit{Two sided long-time optimization singular control problems for Lévy processes and Dynkin games}, 
arXiv preprint arXiv:2403.05731 , (2024).  \
\url{ 	https://doi.org/10.48550/arXiv.2403.05731}
%%%%%%%%%%%%%%%%%%%%%%%%%%%%%%%%%%
%%%%%%%%%%%%%%%%%%%%%%%%%%%%%%%%%%%%%%%%%%%%%%%%%%%%%%%%%
%%%%%%%%%%%%%%%%%%%%%%%%%%%%%%%%%%
\bibitem[Paulsen(2008)]{P}
Paulsen, J. \textit{Optimal dividend payments and reinvestments of diffusion processes with fixed
and proportional costs}, SIAM Journal on Control and Optimization, \textbf{47}, 2201–2226, (2008).
\url{https://doi.org/10.1137/070691632}.
%%%%%%%%%%%%%%%%%%%%%%%%%%%%%%%%%%
%%%%%%%%%%%%%%%%%%%%%%%%%%%%%%%%%%
%%%%%%%%%%%%%%%%%%%%%%%%%%%%%%%%%%
%%%%%%%%%%%%%%%%%%%%%%%%%%%%%%%%%%%%%%%%%%%%%%%%%%%%%%%%%%%%%%%%%%%%%%%%%%%%%%%%%%%%%%%%%%%%%%%%%%%%%%%%%%%%%%%%%%%%%
\bibitem[Peura and Keppo(2006)]{BK}
Peura, S. \& Keppo, J. S. \textit{Optimal Bank Capital with Costly Recapitalization}, The Journal of Business, \textbf{79}, No. 4, 2163-2201, (2006). 
\url{https://doi.org/10.1086/503660}.
%%%%%%%%%%%%%%%%%%%%%%%%%%%%%%%%%%
%%%%%%%%%%%%%%%%%%%%%%%%%%%%%%%%%%
%%%%%%%%%%%%%%%%%%%%%%%%%%%%%%%%%%
\bibitem[Park and Sehie (1999)]{Brouwer}
Park \& Sehie. \textit{Ninety years of the Brouwer fixed point theorem}, \textit{Vietnam Journal of Mathematics}, \textbf{27}, (1999). 
%%%%%%%%%%%%%%%%%%%%%%%%%%%%%%%%%%%%%%%%%%%%%%%%%%%%%%%%%%%%%%%%%%%%%%%%%%%%%%%%
%%%%%%%%%%%%%%%%%%%%%%%%%%%%%%%%%%
%%%%%%%%%%%%%%%%%%%%%%%%%%%%%%%%%%%%%%%%%%%%%%%
%%%%%%%%%%%%%%%%%%%%%%%%%%%%%%%%%%%%%%%%%%%%%%%%%%%%%%%%%%%%%%%%%%%%%%%%%%%%%%%%%%%%%%%%%%%%%%%%%%%%%%%%%%%%%%%%%%%%%%%%%%%%
%
%%%%%%%%%%%%%%%%%%%%%%%%%%%%%%%%%%
%%%%%%%%%%%%%%%%%%%%%%%%%%%%%%%%%%
%%%%%%%%%%%%%%%%%%%%%%%%%%%%%%%%%%
\bibitem[Sato(1999)]{KIS}
 Sato K. 
\textit{L\'evy processes and infinitely divisible distributions}. 
Cambridge University Press, English edition, (1999). 
%%%%%%%%%%%%%%%%%%%%%%%%%%%%%%%%%%
%%%%%%%%%%%%%%%%%%%%%%%%%%%%%%%%%%
%%%%%%%%%%%%%%%%%%%%%%%%%%%%%
%%%%%%%%%%%%%%%%%%%%%%%%%%%%%%%%%%
\bibitem[Shreve et al.(1984)]{SLG} Shreve, S., Lehoczky, J. \& Gaver, D. \textit{Optimal consumption for general diffusion with
absorbing and reflecting barriers}, SIAM Journal on Control and Optimization, \textbf{22}, 55–75, (1984).
%%%%%%%%%%%%%%%%%%%%%%%%%%%%%%%%%%
%%%%%%%%%%%%%%%%%%%%%%%%%%%%%%%%%%
%%%%%%%%%%%%%%%%%%%%%%%%%%%%%%%%%%%%%%%%%%%%%%%%%%%%%%%%%%%%%%%%%%%%%%%%%%%%%%%%%%%%%%%
\bibitem[Subramanian \& Mahajan (2019)]{SUBRAMANIAN}
Subramanian, J. \& Mahajan, A. \textit{Reinforcement Learning in Stationary Mean-field Games}, AAMAS '19: Proceedings of the 18th International Conference on Autonomous Agents and MultiAgent Systems, (2019).
DOI: 10.5555/3306127.3331700
%%%%%%%%%%%%%%%%%%%%%%%%%%%%%%%%%%%%%%
%%%%%%%%%%%%%%%%%%%%%%%%%%%%%%%%%%
%%%%%%%%%%%%%%%%%%%%%%%%%%%%%%%%%%
%%%%%%%%%%%%%%%%%%%%%%%%%%%%%%%%%%
\bibitem[Weerasinghe (2002)]{AW}
Ananda P.N Weerasinghe. \textit{Stationary Stochastic Control for Itô Processes}, Advances in Applied Probability, \textbf{34}(1), 128–140, (2002). \url{http://www.jstor.org/stable/1428375}
%%%%%%%%%%%%%%%%%%%%%%%%%%%%%%%%%%
%%%%%%%%%%%%%%%%%%%%%%%%%%%%%%%%%%
%%%%%%%%%%%%%%%%%%%%%%%%%%%%%%%%%%
\bibitem[Weerasinghe (2007)]{AW2}
Ananda P.N Weerasinghe. \textit{An Abelian Limit Approach to a Singular Ergodic Control Problem}, SIAM Journal on Control and Optimization.
\textbf{46}(2), 714-737, (2007).
\url{https://doi.org/10.1137/050646998}
%%%%%%%%%%%%%%%%%%%%%%%%%%%%%%%%%%
%%%%%%%%%%%%%%%%%%%%%%%%%%%%%%%%%%
%%%%%%%%%%%%%%%%%%%%%%%%%%%%%%%%%%
\bibitem[Wu and Chen (2017)]{wuchen}
 Wu, Y.L \& Chen, Z.Y.
\textit{On the Solutions of the Problem for a Singular Ergodic Control}, Journal of Optimization Theory and Applications, Springer, \textbf{173}(3), 746-762, (2017).
\url{https://ideas.repec.org/a/spr/joptap/v173y2017i3d10.1007_s10957-017-1099-y.html}
%%%%%%%%%%%%%%%%%%%%%%%%%%%%%%%%%%
%%%%%%%%%%%%%%%%%%%%%%%%%%%%%%%%%%
%%%%%%%%%%%%%%%%%%%%%%%%%%%%%%%%%%
%%%%%%%%%%%%%%%%%%%%%%%%%%%%%%%%%%%%%%%%
%%%%%%%%%%%%%%%%%%%%%%%%%%%%%%%%%%%%%%%%%%%%%%%%%%%%%%%%%%%%%%%%%%%%%%%%%%%%%%%%%%%%%%%%%%%%%%%%%%%%%%%%%%%%%%%%%%%%%%%%%%%%%
\end{thebibliography}
\end{document}